\newtheorem{theo}{Theorem}
\newtheorem{theoreme}{Theorem}[section]
\newtheorem{proposition}{Proposition}
\newtheorem{lem}[proposition]{Lemma}
\newtheorem{corollaire}[proposition]{Corollary}
\newtheorem{defi}[proposition]{Definition}
\newtheorem{remarque}[proposition]{Remark}
\numberwithin{equation}{section}
\numberwithin{proposition}{section}
\def\rit{{\mathbb R}}
\def\tit{{\mathbb T}}
\def\eps{\varepsilon}
\def\beq{\begin{equation}}
\def\eeq{\end{equation}}
\def\Im{\textrm{Im}}
\def\Re{\textrm{Re}} 
\def\11{{\rm 1~\hspace{-1.4ex}l} }
\def\R{\mathbb R}
\def\C{\mathbb C}
\def\Z{\mathbb Z}
\def\N{\mathbb N}
\def\T{\mathbb T}
\begin{document}
\title[Transverse instability]{Transverse nonlinear instability  of solitary waves for some Hamiltonian PDE's}
\author{Frederic Rousset}
\address{
Laboratoire J.A. Dieudonn\'e, Universit\'e de Nice, 06108 Nice cedex 2, France }
\email{frederic.rousset@unice.fr}
\author{Nikolay Tzvetkov}
\address{D\'epartement de Math\'ematiques, Universit\'e Lille I, 59 655 Villeneuve d'Ascq cedex, France}
\email{nikolay.tzvetkov@math.univ-lille1.fr}
\date{}
\begin{abstract} 
We present a general result  of transverse nonlinear instability of 1-d
solitary waves  for  
Hamiltonian PDE's for both periodic 
or localized transverse perturbations. Our main structural assumption is that the linear part of the $1d$ 
model and the transverse perturbation ``have the same sign''. Our result applies to the generalized 
KP-I equation,  the Nonlinear Schr\"odinger equation,  the generalized Boussinesq system and  the
Zakharov-Kuznetsov equation and we hope that it may be useful in other contexts.
\end{abstract}

\maketitle
\tableofcontents
%

\section{Introduction}
 A lot of two-dimensional dispersive equations  possess  one-dimensional
  solitary waves which are stable when submitted to one-dimensional 
   perturbations  but which are destabilized  when submitted to general
     two-dimensional perturbations. This phenomenon has been known
      for a long time in the physics literature. For example,  by using
       the Lax pair structure of the KP-I equation, it was proven in \cite{ZK}
        that the KdV solitary wave seen as a 1d solution of the KP-I equation
         is unstable. For non-integrable equations, the general instability
   theory of solitary waves of \cite{GSS} does not seem to apply  since
    the 1-d solitary wave is not a constrained  critical point of the Hamiltonian of the 2d
     equation. Nevertheless, in some cases, the linear instability 
      can be proven by some simple bifurcation arguments, 
       for example,   the  linear instability of the 1d  solitary wave of
        the 2d  Nonlinear   Schr\"odinger equation (NLS)  can be proven
         by the   Zakharov-Rubenchik bifurcation argument for small 
transverse frequencies. Consequently, it seems interesting to  reduce
 the proof of nonlinear instability to the search for unstable eigenmode
  for the linearized equation by 
 proving that   linear instability implies  nonlinear
  instability for a large class of equations.  
      
In \cite{RT},  we have shown that
 the method developed by Grenier  \cite{Grenier} for the incompressible Euler equation can be adapted
  to prove transverse instability of solitary waves in dispersive models.
   More precisely, we have proven 
  two nonlinear instability results for  solitary waves  of 
the  Korteweg- de Vries and the $1d$ Nonlinear Schr\"odinger equations (NLS), 
seen as solutions of the KP-I or the $2d$ NLS equations respectively
and subject to periodic transverse perturbations.
The linear instability in both cases was known. More precisely, in the KP-I case one has a complete 
understanding of the possible unstable modes for any fixed transverse frequency while in the NLS case
unstable modes where detected thanks to the Zakharov-Rubenchik bifurcation argument for small 
transverse frequencies. The possibility of describing all unstable modes in the KP-I case seems to be related to
the Lax pairs structure of the KP-I equation (sometimes called complete integrability). The  
Zakharov-Rubenchik bifurcation argument is a more general feature but does not seem to apply in some important
cases such as the gKP-I equation, a case which is in the scope of the applicability of the present paper.
Our goal here is to present a general transverse nonlinear 
instability theory of solitary waves,   assuming  the spectral  instability 
 of the solitary wave,   for Hamiltonian PDE's obeying to some structural assumptions
described below,  the main one being that, in some sense, the transverse
perturbation and the $1d$ dispersion operator should have the same sign.
More precisely, we state  two  instability results, one for transverse periodic boundary
 condition and one  where the transverse direction is unbounded
    and the  perturbations are   localized. This last case was not
     studied  in  our previous work \cite{RT}  and requires more
     work in the study of low frequencies.
We 
also present a criterion  to detect unstable modes,  and thus
 to prove linear instability,   inspired by the work of Groves-Haragus-Sun \cite{GHS}, 
which is different and more flexible than the one presented in our 
previous work \cite{RT} for NLS.   
 Finally, 
we check  that  our general theory
 can be applied to prove the  linear and nonlinear instability of 1d solitary waves in 
 the generalized KP-I equation, the 2d NLS equation,  a Boussinesq  type equation, 
  the Zakharov-Kuznetsov equation and the KP-BBM equation.
 

Our method mainly depends on the Hamiltonian structure of the equation and
we hope that the ideas of this paper may be extended to more general, not
necessarily linear transverse perturbations. In particular, we hope that our
approach may be useful to get transverse instability for some 
 more complicated 
 fluid mechanics models.

The  paper is organized as follows.
We  first describe the general framework and our assumptions.
Then  we state two abstract instability results under the
additional assumption of the existence of an unstable mode of the linearized
equation. 
Some of our assumptions will be easily verified in the applications. Other
assumptions such as the existence of  multipliers or the bounded frequencies resolvent estimates
are not a general feature in the considered framework. For that reason in the
later sections we present criteria insuring the validity of these assumptions
 and in particular, a criterion for the existence of unstable eigenmodes.
 These criteria will be usefull to analyze  our concrete examples. 
In the last section of the paper, we apply the general theory to various examples.
 \bigskip

{\bf Acknowledgement.}
We are indebted to Jean-Claude Saut for several discussions on the subject and
in particular for providing us the reference \cite{GHS}.

\section{General framework and  results}
\subsection{The unperturbed model}
\label{s1D}
For $s$ a real number, we consider the Sobolev spaces $H^s\equiv H^s(\R;\R^d)$, 
where $d\geq 1$ is an integer
 and we denote its norm by $|\cdot  |_{s}$. The $L^2$ norm will be simply denoted
  by $|\cdot|$ and the $L^2$ scalar product by $(\cdot, \cdot)$.  We 
consider the equation
\beq\label{ham1pak}
\partial_{t} u = J(L_{0}u+\nabla F(u)),
\eeq
where $F\in C^{\infty}(\R^d;\R)$, $F(0)=0$ and
 the linear operators $J$ and $L$ are such that : 
\begin{itemize}
\item $J$   is  a Fourier multiplier which is skew-symmetric  for the $L^2$
  scalar product  with domain containing $H^1$ (thus $J$ is
 of order at most one)  and  such that  $\mbox{Ker} J =
\{ 0\}$.  
\item   $L_{0}$  is a  Fourier multiplier which is a  symmetric operator  with a self
adjoint realisation on $L^2(\R;\R^d)$  with   domain $D(L_0)$ containing $ H^2$. 
  Moreover,  $L_0$ is coercive, 
   \beq
   \label{coerc}
C^{-1}|u|_{1}^2\leq (L_0u,u)\leq C |u|_{1}^2\,.
   \eeq
\end{itemize}
 Note that since $J$ and $L_{0}$ are Fourier multipliers, they commute with the
 $x$ derivative and hence we have that $J \in \mathcal{B}(H^s, H^{s-1})$
  and  $L_{0} \in \mathcal{B}(H^s, H^{s-2})$ for every $s$.

Equation (\ref{ham1pak}) can  thus be written in the Hamiltonian form
$$
\partial_{t}u=J\nabla H(u),\quad H(u)=\frac{1}{2}(L_0 u,u)+\int_{-\infty}^{\infty}F(u)\, 
 dx\,.
$$
One may imagine situations when $J$ and   $L_0$ are  of  higher orders. In these cases some modifications of the 
considered framework should be done. However, in all our examples $L_0$  is  of order $2$ and $J$ of order $0$ or $1$.

We are interested in the stability of 
stationary solutions of \eqref{ham1pak}. Since $J$ is into, they are critical
points of the Hamiltonian $H$, i.e. we have 
$\nabla H(Q) =  L_{0}Q+\nabla F(Q)=0$. We  focus on the case where $Q$ is
smooth,  $Q\in H^{\infty}$. Next, we consider the linear operator  associated
to the second variation of the Hamiltonian at $Q$ : 
$$  
L\equiv D_{u}(\nabla H)(Q) = L_{0}+ R, \quad Ru= \nabla^2 F(Q) u.
$$
Note  that $R$ is a bounded operator on  $H^s$ for every $s \geq 0$ since $Q$ and $F$ are smooth.
Consequently, $L$ is a self adjoint operator on $L^2$ with domain $D(L_0)$.  
Our main assumption on $L$ is that its spectrum   is under the form 
\begin{equation}
\label{spectre}
\sigma(L) = \{\mu\}\cup \{0\} \cup \Sigma, 
\end{equation}
where $\mu<0$ is a simple  eigenvalue, $0$ is an eigenvalue of finite multiplicity
and $\Sigma \subset [\alpha, + \infty)$ for some $\alpha >0$. Moreover,  the
eigenspaces associated to $\mu$ and zero are made of smooth eigenvectors (i.e
which are in  $H^\infty)$. 
Many of our arguments remain valid if $\sigma(L)\cap ]-\infty,0]$ contains a
finite number of eigenvalues of finite multiplicities.
We will be interested in situations where $Q$ is a stable object for
  \eqref{ham1pak}. Note that the spectral assumption \eqref{spectre}
   is one of the main assumption which allows to prove the
    stability of $Q$ by the Grillakis-Shatah-Strauss method \cite{GSS}.
\subsection{The transversally  perturbed model}
We are interested in the stability of $Q$ when \eqref{ham1pak} can be embedded in a larger Hamiltonian equation
\beq\label{ham2}
\partial_{t} u = \mathcal{J}(\partial_{y}) (L_0u+ \nabla F (u) +\mathcal{S}(\partial_{y}) u ),
\eeq
where now $u$ also  depends on $y$  with $y \in \mathbb{T}_{a}=$$\mathbb{R}/
2\pi a  \mathbb{Z}$ or $y \in \mathbb{R}$ and  $L_0$ acts in a natural
way on functions of $2$ variables.
The operators $\mathcal{J}(\partial_{y}), $   $\mathcal{S}(\partial_{y})$ are  operator valued
Fourier multipliers in $y$, i.e. if $\mathcal{F}_{y}$  stands for the Fourier transform in $y$, we have 
$$ \mathcal{F}_{y} (\mathcal{S}(\partial_{y} )  u )(k) = S(ik) 
\mathcal{F}_{y}(u)(k) ,  \quad  \mathcal{F}_{y} (\mathcal{J}(\partial_{y} )  u )(k) = J(ik) 
\mathcal{F}_{y}(u)(k).$$
Moreover, $S(ik)$ and $J(ik)$ are now Fourier multipliers in $x$.
In the following, we still denote by $(\cdot, \cdot)$ and $|\cdot |_{s}$ the complex scalar product of 
$L^2(\mathbb{R}, \mathbb{C}^d)$ and the $H^s$ norm for complex valued functions respectively.
   
\subsubsection{Assumptions on the operator $J(ik)$}
\label{sJ}
For every $k$, $J(ik)$ is a Fourier multiplier
 such that: 
 \begin{itemize}
 \item $J(ik)$  and $J(ik)L_{0}$   are     skew symmetric  on $L^2(\R)$, $J(0)=J$,  
 \item  the domain of $J(ik)$  contains  $H^1$, $\mbox{Ker }J(ik)= \{0\}$ 
  and we have  the uniform bound
 \beq
 \label{Jik}
  \exists\, C>0, \, \forall\, k, \quad    |J(ik) u |  \leq C |u|_{1}, \, \forall u \in H^1,
  \eeq 
 \item   The commutator $[R, J(ik)]$ is a uniformly bounded operator
  on $L^2$ :   
  \beq
  \label{JR}
 \exists\, C>0, \quad    \forall\, k, \quad   \big|\big( [R, J(ik)] w, w \big) \big| \leq C |w|^2.
   \eeq
 \end{itemize} 
 Note that since $J(0)=J$,   $\mathcal{J}(\partial_y) u = Ju$ if $u$ depends only on $x$.
   We also point out that the assumption \eqref{JR} is obviously verified
    when $J$ is a bounded operator on $L^2$.
 
\subsubsection{Assumptions on the operator $S(ik)$} 
\label{sS}   
For every $k$,  $S(ik)$  is a  Fourier multiplier
 such that: 
\begin{itemize}
\item  $S(ik)$ is  non-negative and symmetric, $J(ik)S(ik)$ is skew symmetric, $S(0)=0$,
\item $S(ik)$ has  a self-adjoint realisation on   $L^2$  with
domain $\mathcal{D}_{S}$ independent of $k$ for $k \neq 0$,
\item   $J(ik)S(ik)J(ik)$  and   $J(ik) S(ik) \partial_{x} $  belong to $\mathcal{B}(H^2(\R), L^2(\R))$, 
 \item Let us set  $ |w|_{S(ik)}^2 \equiv ( w, S(ik) w)$, then  there exists a
non-negative continuous function (possibly unbounded) $C(k)$  such that
\beq\label{JSw}
|J(ik) S(ik) u|_{L^2} \leq C(k) |u|_{S(ik)}, \quad \forall\,\, u \in \mathcal{D}_{S}\,.
\eeq
\end{itemize}
Note that \eqref{ham2} also has an Hamiltonian structure
 with Hamiltonian given by 
$$ \mathcal{H}(u) = \int \Big(\frac{1}{2} (L_{0}u, u ) +  
\frac{1}{2}(\mathcal{S}(\partial_{y})u, u ) + \int_{\mathbb{R}} F(u)\, dx \Big) \, dy.$$
Moreover, we also point out that  $Q$ is still a stationary solution of \eqref{ham2}
 and more generally that  if $u$ is a (reasonable) solution of \eqref{ham2}
  which does not depend on $y$, then $u$ actually solves \eqref{ham1pak}.
  
\subsubsection{Compatibility between $S(ik)$ and $L$}
 We assume that 
 there exists $K$  and $c_{0}>0$ such that for every $|k| \geq K$,
\beq\label{klarge}
(Lv, v ) + (S(ik)v, v ) \geq c_{0}  |v |_{1}^2, \quad \forall\,\, v \in
H^2 \cap \mathcal{D}_{S}.
\eeq
This is one of our main  structural assumption which  roughly says
 that  $S$ and $L_{0}$ have the same sign. This assumption is valid
  for example for the KP-I equation and the 2d NLS equation but not
   for the KP-II equation or the hyperbolic Schr\"odinger equation.

\subsection{The resolvent equation}
 
In this subsection, we  state  our assumptions on  the linearization of
(\ref{ham2}) about $Q$.  
Since $\mathcal{S}(\partial_{y})$ is a linear map, 
the linearization of (\ref{ham2}) about $Q$ reads
\begin{equation}\label{linearizatz}
v_{t}=\mathcal{J}(\partial_{y}) (L+\mathcal{S}(\partial_{y}))v.
\end{equation}
\begin{defi}
An unstable mode for (\ref{ham2}) is a function $U\in L^2 \cap \mathcal{D}(S(ik))$ such that for some
$\sigma\in \C$ with  $\Re(\sigma)> 0$ and some $k\in \R$,   the problem
(\ref{linearizatz}) has a solution of the form
\beq
\label{imode}
v(t,x,y)=e^{\sigma t}U(x)e^{iky}\,.
\eeq
We call $\sigma$ the amplification parameter and $k$ the transverse frequency associated to $U$.
\end{defi}
Thus if $U$ is an unstable mode then it is a solution of the eigenvalue  problem
\beq\label{spectral}
\sigma  U = J(ik) (L+S(ik))U,\quad U\in L^2(\R;\C^d).
\eeq

\subsubsection{Assumption of existence of an Evans function and 1d stability}
\label{asE}
We assume that there exists a function $D(\sigma, k)$ (Evans function) such that  for every $k$, $D(\cdot, k)$
   is analytic in $\mbox{Re } \sigma >0$ and such that
    there exists an unstable mode \eqref{imode} if and only if
     $D(\sigma, k)=0$.  We also  assume that all the possible  unstable eigenmodes are smooth
     ($H^\infty$)
     and that $Q$ is spectrally stable with respect to  one-dimensional  perturbations  which  reads: 
   \beq
     \label{evans1D}
     D(\sigma, 0) \neq 0, \quad  \mbox{Re }\sigma>0.
     \eeq

     A concrete criterion for the existence
      of the Evans function will be given in section \ref{criteria}.
      In most examples we have in mind,  \eqref{spectral} can be reduced
       to an ordinary differential equation and hence, 
       as usual,  the Evans
      function will be defined as a Wronskian determinant associated to an ODE
      obtained after some manipulations from (\ref{spectral}).

\bigskip
Next,  let us  consider the  resolvent equation for $Re(\sigma)>  0$ 
\begin{multline}\label{res2} 
\sigma  U = J(ik) LU +  J(ik) S(ik)U + J(ik) F,\\ 
U\in H^\infty(\R;\C^d) \cap \mathcal{D}(S(ik), \, F \in H^\infty(\mathbb{R}, \mathbb{C}^d )\,.
\end{multline}
\subsubsection{Bounded frequencies  resolvent bounds in the periodic case}
   \label{hypper}
    We assume that there exists $q$ such that
     for every $k$ and every  $\mathcal{K}$ compact set in $\mbox{Re
     }\sigma>0$, every $s\geq 0$, there exists $C_{k, \mathcal{K}, s}$ such that if
       $D(\cdot, k)$ does not vanish on $\mathcal{K}$, then  for every $F \in H^\infty(\R)$, there is 
         a unique solution  $U \in H^\infty(\R)  \cap \mathcal{D}(S(ik))$ of \eqref{res2}
           which satisfies
   \beq
   \label{resper}
    |u|_{s} \leq C_{k, \mathcal{K}, s} | F |_{s+q}, \quad \forall \sigma \in \mathcal{K}.
    \eeq
 
    \subsubsection{  Bounded frequencies   resolvent bounds in the  localized case}
    \label{hyploc}
   When $k$ is a continuous variable,  we need   some uniform dependence
    in $k$ in the regime $k\sim 0$. We shall assume that the Evans function  $D$  is analytic
     in $(\sigma, k)$ for $\mbox{Re }\sigma >0$ and $k \neq 0$ and
      that there exists an analytic continuation $\tilde{D}(\sigma, k)$
       which is analytic in $\{\mbox{Re }\sigma >0\} \times \mathbb{R}$.
       Moreover,   we  assume a strong $1D$ stability
   \beq
     \label{evans1Dt}
    \tilde{D}(\sigma, 0) \neq 0,   \quad 
    \forall \sigma, \, \mbox{Re }\sigma>0
     \eeq 
     and the uniform (also with respect to $k$) resolvent bound : there exists $q\geq 0$ such that  
  for every compact set $\mathcal{K}$ in $\{\mbox{Re }\sigma >0\}$
   and $M>0$,  there exists $C_{\mathcal{K},M, s}$ such that
 if 
  $\tilde{D}(\sigma, k)$ does not vanish on $\mathcal{K}\times (0, M] $, then
    for every $F \in H^\infty$,   there is 
         a unique solution  $U \in H^\infty  \cap \mathcal{D}(S(ik))$ of \eqref{res2}
          which satisfies
   \beq
   \label{resloc}
    |u|_{s} \leq C_{\mathcal{K}, M,  s} | F |_{s+q}, \quad
     \forall (\sigma, k)\in \mathcal{K}\times (0, M].
    \eeq

As we shall see below, in most examples the existence of the  Evans function
     and the bounds \eqref{resper}, \eqref{resloc} can be obtained
      by ODE techniques. We shall give below a simple criterion
         which allows to obtain \eqref{resper}, \eqref{resloc}.
         We also point out that  we allow the case where $\tilde{D}(\sigma, 0)$
          is different from $D(\sigma, 0)$ since we have not assumed
         continuity of $D$ at $k=0$. Typically $D(\sigma,k)$ is the
         determinant of a matrix of fixed size for $k\neq 0$ and $D(\sigma,0)$
         is the determinant of a smaller matrix. 

\bigskip
 As we shall prove, the assumptions \eqref{spectre}, \eqref{klarge} and the structural properties
  of the operators given in sections \ref{sJ}, \ref{sS} are sufficient
   to ensure nice resolvent bounds in the energy norm $H^1$  for large $|\mbox{Im }\sigma|$.
   The following assumption will be used to  get  the estimates of
    higher order derivatives.
\subsubsection{Existence of a multiplier}\label{Ms}
We suppose that for every $s\geq 2$, there exists a self-adjoint operator $M_{s}$ such that
there exists $C>0$ with 
\begin{equation}\label{Ms1}
 | (M_{s}u, v ) | \leq  C |u|_{s}\, |v|_{s},\quad
(M_{s}u, u ) \geq |u|_{s}^2 - C |u|_{s-1}^2
\end{equation}
and
\begin{equation}\label{Ms2}
\Re(J(ik)(L+ S(ik))u, M_{s} u ) \leq C_k  |u|_{s}\, |u|_{s-1} .
\end{equation}
The assumption (\ref{Ms2}) will play a key role for the control on higher
derivatives in a resolvent analysis below.
In the cases of "semi-linear"
problems we will be able simply to choose $ M_s=\partial_{x}^{s-1}L\partial_{x}^{s-1}$.

\subsection{The nonlinear problem}
\label{asnon}    
Finally, we make a set of  assumptions on  the nonlinear problem \eqref{ham2}.  
Denote by $\mathbb{H}^s$ the Sobolev type spaces on $\R\times \R$ or $\R\times
\T_{a}$  with the  norms $\|\cdot\|_{s}$. 
We denote by $\|\cdot\|$ the norm of $L^2$. Consider the problem
\begin{equation}\label{ham222}
\partial_{t} u = \mathcal{J}(\partial_y) ( L_{0}u+\nabla F(u^a+u)-\nabla F(u^a)
+\mathcal{S}(\partial_{y}) u )+\mathcal{J}(\partial_y) G,\,\,
u(0)=0,
\end{equation}
where $u^a$ is a smooth function bounded with all its derivatives
and $G\in C(\R;{\mathbb H}^s)$ for every $s$.
We suppose that the problem \eqref{ham222} is locally well-posed in the sense
that for every $u^a$ and $G$ satisfying the previous assumptions there exists
a time $T>0$ and a solution of (\ref{ham222}) in $C([0,T];{\mathbb H}^s)$
 for every $s \geq s_{0}$ ($s_0>0$ being sufficiently large), unique in a suitable class.
 Finally,  we  assume the that  tame estimate \beq
 \label{tame}
  \Big|\Big(\!\!\Big( \partial_{x}^\alpha \partial_{y}^\beta
  \mathcal{J}(\partial_{y})
\big( D\nabla F( w+v)\cdot  v\big), 
   \partial_{x}^\alpha \partial_{y}^\beta v
  \Big)\!\!\Big) \Big| \leq \omega\big(   \|w \|_{W^{s+1, \infty} }+ \| v\|_{s} \big) \| v\|_{s}^2
  \eeq
  holds for every $\alpha$, $\beta$, 
  $\alpha+\beta=s$, 
where $\omega$ is a continuous non-decreasing function with $\omega(0)=0$
 and $(\!(\cdot, \cdot )\!)$ is the $L^2$ scalar product for functions
  of two variables.
  
  This last assumption together with the properties
   of the operators $\mathcal{J}$ and $L_{0}$  will ensure the existence of an $\mathbb{H}^s$
    energy estimate for \eqref{ham222}.

\subsection{Statement of the abstract results}
Let us state our first instability result for (\ref{ham2}) with $\R\times \T_{a}$ as a spatial domain.
\begin{theo}[Nonlinear transverse periodic instability]\label{theoper} 
Consider the Hamiltonian equation (\ref{ham2}) and suppose that the assumptions of the previous sections 
hold true, except the assumptions of Sections~\ref{hyploc}. 
Assume also that there exists an unstable mode with corresponding transverse frequency $k_{0} \neq 0$.
Then we have nonlinear instability  of \eqref{ham2} defined on $\mathbb{R}\times \mathbb{T}_{2\pi/k_{0}}$.
More precisely for every $s\geq 0$, there exists $\eta>0$ such that for every $\delta >0$, there exists 
$u_{0}^\delta \in \mathbb{H}^\infty(\mathbb{R}\times \mathbb{T}_{2\pi/k_{0}})$ 
and a time $T^\delta \sim |\log \delta | $  such that 
$
\|u_{0}^\delta - Q \|_{s} \leq \delta 
$
and the solution $u^{\delta}$ of \eqref{ham2} with data $u_{0}^\delta$ remains 
 in $ \mathbb{H}^s$ on $[0, T^\delta]$ and satisfies
$ 
d( u^\delta(T^\delta),  \mathcal{F})  \geq  \eta  
$
where  $\mathcal{F}$  is the space of $L^2(\R)$ functions depending only on $x$ and
$
d( u,  \mathcal{F}) = \inf_{v \in \mathcal{F}} \|u - v \|. 
$
\end{theo}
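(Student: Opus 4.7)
The plan is to follow the Grenier-type scheme already used in \cite{RT}: build, for every small $\delta > 0$, an arbitrarily high order approximate solution $u^a$ of \eqref{ham2} whose leading order is the amplified unstable eigenmode, and then control the remainder $v = u - u^a$ by a tame $\mathbb{H}^s$ energy estimate together with a bootstrap argument. We will propagate the smallness of $v$ up to the time $T^\delta \sim \frac{1}{\mathrm{Re}\,\sigma}\log(1/\delta)$ at which the amplitude of $u^a - Q$ reaches a fixed size $\eta$.

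\emph{Construction of $u^a$.} Let $U(x)e^{ik_0 y}$ be the unstable mode with amplification $\sigma$, which we take to have maximal real part among zeros of $D(\cdot, k_0)$ in $\{\mathrm{Re}\,\lambda>0\}$. We seek
$$
u^a = Q + \sum_{j=1}^{M}\delta^j u_j, \qquad u_1 = e^{\sigma t}U(x)e^{ik_0 y}+\mathrm{c.c.},
$$
with $u_j$ a finite combination of terms $e^{(n\sigma+m\bar\sigma)t} V_{j,n,m}(x) e^{imk_0 y}$, $n+m=j$. Identifying orders in \eqref{ham2} turns the determination of each $u_j$ ($j\geq 2$) into a finite collection of resolvent equations of the form \eqref{res2} at transverse frequencies $k=mk_0$ and spectral parameters $\lambda = n\sigma+m\bar\sigma$, all with $\mathrm{Re}\,\lambda = j\,\mathrm{Re}\,\sigma$. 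The strict 1D stability \eqref{evans1D} handles the modes with $m=0$, and the maximality of $\sigma$ together with the discreteness of the zeros of $D(\cdot, mk_0)$ ensures $D(\lambda, mk_0)\neq 0$ for the remaining modes once $j$ is large enough; \eqref{resper} then provides $H^\infty(\mathbb{R})$ solutions. The resulting residual
$$
R^a := \partial_t u^a - \mathcal{J}(\partial_y)\bigl(L_0 u^a+\nabla F(u^a)+\mathcal{S}(\partial_y)u^a\bigr)
$$
satisfies $\|R^a(t)\|_s \leq C_{s,M}\bigl(\delta e^{\mathrm{Re}\,\sigma\, t}\bigr)^{M+1}$ in every $\mathbb{H}^s$, as long as $\delta e^{\mathrm{Re}\,\sigma t}$ stays bounded.

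\emph{Energy estimate and conclusion.} The remainder $v = u - u^a$ solves \eqref{ham222} with forcing $G = -R^a$ and $v(0)=0$. Applying $\partial_x^\alpha\partial_y^\beta$ with $\alpha+\beta=s$, testing against the multiplier $M_s$ of Section~\ref{Ms}, and using the skew-symmetries of $J(ik)L_0$, $J(ik)S(ik)$ together with \eqref{Ms1}--\eqref{Ms2} and the tame estimate \eqref{tame}, one obtains
$$
\frac{d}{dt}\|v\|_s^2 \leq C\|v\|_s^2 + C\,\omega\bigl(\|u^a\|_{W^{s+1,\infty}}+\|v\|_s\bigr)\|v\|_s^2 + C\|R^a\|_s\|v\|_s.
$$
A bootstrap on $\|v\|_s \leq 1$ and Gronwall then yield $\|v(t)\|_s \leq C_M \delta^{M+1} e^{(M+1)\mathrm{Re}\,\sigma\, t}$. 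Fix $\eta_0>0$ so that $\|u^a-Q\|_{W^{s+1,\infty}}\leq 1$ as long as $\delta e^{\mathrm{Re}\,\sigma t}\leq\eta_0$, and set $T^\delta=\frac{1}{\mathrm{Re}\,\sigma}\log(\eta_0/\delta)$. Choosing $M$ large enough makes $\|v(T^\delta)\|_s \leq \delta^{1/2}$, closing the bootstrap. At $t=T^\delta$ the leading term $\delta u_1$ is a fixed nontrivial multiple of $U(x)e^{ik_0 y}+\mathrm{c.c.}$ whose $L^2$-distance to $\mathcal{F}$ is bounded below by some $c_0>0$ (since $k_0\neq 0$ and $U\not\equiv 0$), while the higher order terms in $u^a$ contribute $O(\eta_0^2)$ and $v$ contributes $O(\delta^{1/2})$. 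Choosing $\eta_0$ sufficiently small yields $d(u^\delta(T^\delta),\mathcal{F}) \geq c_0\eta_0/2 =: \eta$.

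\emph{Main obstacle.} The delicate step is the linear part of the energy estimate: because $\mathcal{J}(\partial_y)\mathcal{S}(\partial_y)$ may be of order strictly higher than the antisymmetric part can absorb (only \eqref{JSw} is assumed on $JS$ with possibly unbounded $C(k)$), the naive $\mathbb{H}^s$ norm is \emph{not} propagated by the linearized flow. The good quantity must be built from the multiplier $M_s$ of Section~\ref{Ms}, and the compatibility \eqref{klarge} is used frequency by frequency in $k$ to show that $\mathcal{S}(\partial_y)$ contributes a nonnegative main term at high transverse frequencies, while $(M_s u, u)$ controls $\|u\|_s^2$ modulo lower order. The interplay between \eqref{klarge} and \eqref{Ms2} is the technical core of the proof and the main reason these structural hypotheses appear in the abstract setup.
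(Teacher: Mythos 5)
Your plan follows the same Grenier-type scheme as the paper, but you construct the $u_j$ by solving stationary resolvent equations at the finitely many parameters $\lambda = n\sigma+m\bar\sigma$ and $k=(n-m)k_0$, whereas the paper solves the evolution equations $\partial_t u^k = \mathcal{J}(L+\mathcal{S})u^k + \text{source}$ with zero initial data and then transfers the resolvent estimate (Proposition~\ref{resolvant}, which is split into Lemmas~\ref{HF} and \ref{BF}) to a time-growth bound by Laplace transform. Your variant is valid in principle, but it leans on a fact you never establish: to invert $\sigma\mathrm{Id}-J(imk_0)(L+S(imk_0))$ at the parameters $\lambda$ with $\Re\lambda = j\,\Re\sigma$ you need $D(\lambda, mk_0)\neq 0$ for \emph{all} $m$ in the range that appears, not just $m=\pm1$ and $m=0$. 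You take $\sigma$ to be ``maximal among zeros of $D(\cdot,k_0)$'', i.e.\ maximal only at the one frequency $k_0$; this does not exclude a more unstable eigenvalue at some frequency $mk_0$ with $m\geq 2$, which would make some resolvent equation at order $j\geq 2$ non-solvable (or uncontrolled). The paper's Lemma~\ref{krou} is exactly what closes this: it shows that (a) unstable modes exist only for $|mk_0|\leq K$, so only finitely many frequencies compete, and (b) at each such frequency there is at most one unstable mode (via \eqref{klarge}, the spectral assumption \eqref{spectre}, and \cite[Theorem~3.1]{PW}), so a globally most unstable $\sigma_0$ over all $mk_0$ can be selected. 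Without this lemma your statement ``once $j$ is large enough'' is not a repair — the obstruction, if any, would already occur at $j=2$ or $j=3$ and would not go away as $j$ grows.

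The other significant issue is the ``main obstacle'' paragraph, which misidentifies where $M_s$ and \eqref{klarge} are actually used. In the paper the $2d$ nonlinear energy estimate for $w$ is performed with the \emph{naive} $\mathbb{H}^s$ norm: $\mathcal{J}(\partial_y)L_0$ and $\mathcal{J}(\partial_y)\mathcal{S}$ are skew-symmetric, so the high-order linear contributions drop out exactly, and the tame estimate \eqref{tame} handles the nonlinear commutators — no multiplier is needed there. The multiplier $M_s$ and the assumptions \eqref{Ms1}--\eqref{Ms2} enter only in the $1d$ \emph{resolvent} analysis (Lemma~\ref{HF}, higher derivatives), i.e.\ in controlling $|w(\tau)|_s$ uniformly in $\tau$ on a vertical line $\{\Re\sigma=\gamma_0\}$ with $\gamma_0>\sigma_0$; assumption \eqref{klarge} enters there and in Lemma~\ref{krou}, not in the $2d$ energy estimate. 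In your variant, which bypasses the Laplace transform by solving at finitely many $\lambda$'s, you would invoke \eqref{resper} as a black box and avoid Proposition~\ref{resolvant}, but that also means the role you assign to $M_s$ never arises. If you want to keep the resolvent-equation route, the necessary additions are: state and prove (or cite) the analogue of Lemma~\ref{krou}, select $\sigma_0$ as the supremum over all $mk_0$, and then justify $D(\lambda,mk_0)\neq 0$ for $\Re\lambda>\sigma_0$ before invoking \eqref{resper}.
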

Notice that we have a strong instability statement since we measure the
initial perturbation in a strong norm such as $\|\cdot\|_s$ while the
instability occurs in the weaker norm $L^2$. 
Our second result concerns fully localized perturbations.       
\begin{theo}[Nonlinear transverse localized  instability]\label{theoloc}
Consider the Hamiltonian equation (\ref{ham2}) and suppose that the assumptions of the previous sections 
hold true. Assume also that there exists an unstable mode with $k\neq 0$.
Then we have nonlinear instability  of \eqref{ham2} posed on $\mathbb{R}^2$.
More precisely for every $s\geq 0$, there exists $\eta>0$ 
such that for every $\delta >0$, there exists $u_{0}^\delta$ and a time $T^\delta \sim |\log \delta |$ such that 
$
\|u_{0}^\delta - Q \|_{s} \leq \delta 
$
and the solution $u^{\delta}$ of \eqref{ham2} with data $u_{0}^\delta$
remains defined on 
$[0,T^\delta]$, i.e. $u^\delta - Q \in \mathbb{H}^s, $ $\forall t \in [0, T^\delta]$
 and satisfies
$ 
d( u^\delta(T^\delta),  \mathcal{F})  \geq  \eta  
$
where again $\mathcal{F}$  is the space of $L^2(\R)$ functions depending only on $x$ and
$ 
d( u,  \mathcal{F}) = \inf_{v \in \mathcal{F}} \|u - v \|. 
$
\end{theo}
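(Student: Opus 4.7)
The plan is to adapt the Grenier-type scheme used to prove Theorem~\ref{theoper}, retaining most of its architecture but addressing the new issues created by the continuous transverse variable: an integral Plancherel formula in $y$ that cannot be restricted to a discrete lattice, and the need for uniform resolvent bounds down to $k=0$, provided here by \eqref{evans1Dt} and \eqref{resloc}. Starting from an unstable mode $U(x)$ at transverse frequency $k_0\neq 0$ with growth rate $\sigma_0$, I would build an approximate solution $u^a = Q + \sum_{j=1}^N \delta^j u_j$, where $u_1$ is a Fourier-localized wave packet
\[
u_1(t,x,y) = \mathrm{Re}\!\int_{\mathbb{R}} e^{\sigma(k)t}\, U_k(x)\, e^{iky}\, \chi(k)\,dk,
\]
with $\chi$ a smooth cutoff supported in a small neighbourhood of $k_0$ bounded away from $0$, and $(\sigma(k), U_k)$ the analytic continuation of the unstable mode to nearby frequencies, available from the analyticity of the Evans function and the implicit function theorem. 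The higher correctors $u_j$ solve linear inhomogeneous problems generated by the Taylor expansion of $\nabla F(Q + \cdot)$; their $\mathbb{H}^s$ solvability follows from \eqref{resper} applied on the Fourier support of $\chi$, exactly as in the periodic case. The construction is tuned so that the residual is $\mathcal{O}(\delta^{N+1}e^{(N+1)\mathrm{Re}\,\sigma_0\, t})$.

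Setting $v = u - u^a$, the error solves an equation of type \eqref{ham222} with source of that size. I would run a Duhamel-type $\mathbb{H}^s$ energy estimate combining the $H^1$-coercivity coming from \eqref{coerc}, \eqref{klarge} and the skew-symmetries of Sections~\ref{sJ}--\ref{sS}, the multiplier $M_s$ of Section~\ref{Ms} to promote the estimate to $\mathbb{H}^s$, and the tame estimate \eqref{tame} to absorb the nonlinear contribution against the bounded profile $u^a$. The quantitative core is a Duhamel bound for the linearised semigroup on $\mathbb{R}^2$: for each transverse frequency $k$, one shifts the Laplace contour in $\sigma$ to $\mathrm{Re}\,\sigma = \gamma_0$ for some $\gamma_0 \in (0, \mathrm{Re}\,\sigma_0)$ above the spectral abscissa, using \eqref{resper} for $|k| \geq k_*$ and \eqref{resloc} for $0 < |k| \leq k_*$, and integrates in $k$ by Plancherel in $y$.

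A bootstrap as in Theorem~\ref{theoper} then yields a nonlinear solution on $[0, T^\delta]$ with $T^\delta = (\mathrm{Re}\,\sigma_0)^{-1}|\log\delta| - C$, $\|v(T^\delta)\|_s \leq \eta/2$, and $\|u^a(T^\delta) - Q\| \geq 2\eta$. Since $u^a - Q$ has Fourier transform in $y$ supported in $\mathrm{supp}\,\chi$, which avoids $k = 0$, its projection onto $y$-independent functions (that is, onto $\mathcal{F}$) vanishes by Plancherel, so $d(u^a(T^\delta) - Q, \mathcal{F}) = \|u^a(T^\delta) - Q\|$; the triangle inequality then yields $d(u^\delta(T^\delta), \mathcal{F}) \geq \eta$.

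The main obstacle, and the only genuinely new point compared to Theorem~\ref{theoper}, is the uniform treatment of transverse frequencies in a neighbourhood of $k = 0$. In the periodic setting the transverse spectrum is discrete and $k = 0$ is an isolated, inessential point; in the localized setting the nonlinearity redistributes Fourier mass in $y$ over all of $\mathbb{R}$, so the resolvent must be controlled uniformly down to zero. The Evans function typically jumps in dimension at the origin, which would cause a loss in any naive semigroup bound; the existence of the analytic continuation $\tilde{D}$ through $k = 0$ together with \eqref{evans1Dt} and \eqref{resloc} is precisely the structural input that repairs this and allows the contour-shift argument to close uniformly as $k \to 0$.
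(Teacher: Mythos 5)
Your proposal reproduces the overall architecture of the paper's proof --- Grenier-type approximate solution, resolvent bounds in $\sigma$ split between high and bounded temporal frequencies with the new uniform-in-$k$ estimate \eqref{resloc} near $k=0$, $\mathbb{H}^s$ energy estimate with the tame inequality \eqref{tame}, bootstrap, and projection onto transverse frequencies bounded away from zero. You also correctly identify one of the two genuine new difficulties of the localized case, namely the need for resolvent control uniform down to $k=0$ through the analytic continuation $\tilde D$.

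However, there is a real gap: you omit the Laplace-method (steepest descent) analysis of the wave packet, which is the other essential new ingredient of the localized case. Because $u^0$ is a continuous superposition $\int_I e^{\sigma(k)t}U_k(x)e^{iky}\chi(k)\,dk$ and $\Re\sigma(k)$ attains its maximum $\sigma_0$ at an interior point $k_0$ with $[\Re\sigma]^{(m)}(k_0)\neq 0$ for some $m\geq 2$, Plancherel in $y$ gives
$$
\frac{1}{c_s}\,\frac{e^{\sigma_0 t}}{(1+t)^{1/(2m)}}\;\leq\;\|u^0(t)\|_{H^s(\R^2)}\;\leq\;\frac{c_s\, e^{\sigma_0 t}}{(1+t)^{1/(2m)}},
$$
\emph{not} $\|u^0(t)\|\sim e^{\sigma_0 t}$. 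Your claimed lower bound $\|u^a(T^\delta)-Q\|\geq 2\eta$ is therefore not justified as stated; the algebraic correction changes the choice of $T^\delta$ (the paper takes $e^{\sigma_0 T^\delta}(1+T^\delta)^{-1/(2m)}=\kappa/\delta$ rather than $e^{\sigma_0 T^\delta}=\kappa/\delta$), propagates into the bounds on the higher correctors ($\|u^k(t)\|_s\lesssim e^{(k+1)\sigma_0 t}(1+t)^{-(k+1)/(2m)}$), and alters the Gronwall closure in the bootstrap. Obtaining this asymptotic further requires two steps that your proposal does not address: one must first show that the set of possible unstable $(\sigma,k)$ is compact and that $k\mapsto\Re\sigma(k)$ extends continuously to the closure of the set $\Omega$ of unstable $k$ (using uniqueness of the unstable mode for each fixed $k$), so that a genuine maximizer $k_0$ exists; and one must use the analyticity of $\sigma(\cdot)$ to guarantee the finite-order non-degeneracy $[\Re\sigma]^{(m)}(k_0)\neq 0$ required to apply the Laplace method. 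Without these, the lower bound on the leading term of the approximate solution --- and hence the final instability conclusion --- cannot be closed.
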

These theorems state that the existence of an unstable eigenmode implies
nonlinear orbital  instability of the solitary wave. Indeed, the orbit of $Q$  
under the action of all  the possible groups of   invariance of
\eqref{ham1pak} remain in $\mathcal{F}$. 
In particular our results exclude the possibility of orbital stability of $Q$ with respect to the
spatial translations. More precisely our result implies that
$$
\inf_{a\in\R}\|u(T^\delta)-Q(\cdot-a)\|\geq \eta.
$$

 There are many assumptions in these theorems, nevertheless,
  some of them will be very easy  to check on examples, for example
   the structural assumption \ref{sJ}, \ref{sS}. The ones which
    are more difficult to check are the assumptions
     \ref{asE}, \ref{hypper}, \ref{hyploc}, \ref{Ms} that is to say, the existence of
      an Evans function and of multipliers,  the  bounded frequencies resolvent bounds, and also the assumption
       on the existence of an unstable eigenmode. Consequently,
  the next sections are devoted to the  proof of more concrete criteria 
     which ensure that these assumptions are verified and which are easy
      to test on examples.

  Let us explain the main steps in the proof of Theorem 2.
   The  inspiration comes from the work of Grenier \cite{Grenier}
     in fluid mechanics problems. 
   We believe that this scheme is quite general and may be useful in other contexts.
\begin{enumerate}   
\item[1.] The first step is   to prove that  the possible  unstable modes
  in the sense of Definition~2.1 above necessarily belong to a compact set both with respect to the transverse frequency and the amplification parameter.
 This allows  to  find  the most unstable mode  i.e. with the  largest real part of the amplification parameter
 (note that there exists at least an unstable eigenmode by assumption) and to define a first  approximate  growing solution by a wave packet construction
  in the framework of Theorem \ref{theoloc}.
\item[2.]  The second   step is to evaluate, both from above and below, in a suitable norm (here it is $L^2$) the first approximate  growing solution given by step 1.  In the proof
 of Theorem \ref{theoloc}, we need to 
 use the  Laplace method   and  some properties of the curve $k\rightarrow \sigma(k)$.
\item[3.] The third  step is, following Grenier \cite{Grenier},  the construction of a refined approximate solutions which is carefully  estimated  from above.  Since we deal with Hamiltonian PDE's this step requires a different argument compared to  similar estimates for diffusive problems. Here we reduce the matters to resolvent bounds for $\sigma-J(ik)(L+S(ik))$ for $\sigma$'s with real parts larger that the amplification parameter of the most unstable mode and any $k$ in the (compact) set of possible transverse frequencies.
\item[4.] The last step is to estimate the difference between the refined approximate solution and the true solution on the interval $[0,T^\delta]$ by energy estimates. The analysis in this step is quite flexible and seems to apply each time we have $H^s$ energy estimates for the full $2d$ problem.

\end{enumerate}

  The  paper is organized as follows. In section \ref{instab}  we give
   a criterion for the existence of an unstable eigenmode,   in section \ref{criteria},
    we give criteria for the existence of the Evans function and the bounded
     frequencies resolvent bounds and in section \ref{multip}, we give a criterion
      for the existence of  multipliers satisfying (\ref{Ms1}), (\ref{Ms2}).  The two next sections
       are devoted to the proof of Theorem \ref{theoper} and \ref{theoloc}
        and finally, the last  section is devoted to the study of various examples for which
         we check that the general theory can be applied.

\section{A sufficient condition for the existence of an unstable mode} 
\label{instab}
In this section, we give a simple criterion which ensures the existence of an
unstable eigenmode. This criterion is inspired by the work \cite{GHS}.
Consider the  symmetric operator  defined by
$$
M_{k} = J (ik)LJ(ik)  + J (ik) S(ik) J(ik).
$$
Since $J(ik)S(ik)J(ik) \in \mathcal{B}(H^2, L^2)$  by assumption \ref{sJ}, we get
 that the domain $\mathcal{D}$ of $M_{k}$  contains $H^4$
 (indeed, $J(ik)$ is at most a  first order operator and $L$ is
  a second order operator).
   
A simple criterion for the existence of an unstable eigenmode is given by the following statement.
\begin{lem}\label{eigen} 
Assume that for every $k$ and every $u$ real-valued  $J(ik)u$ and $S(ik)u$
are also real valued. Next, 
assume that  there exists $k_{0}\neq 0$ such that zero is a simple
eigenvalue of $M_{k_{0}}$ with corresponding real-valued 
 nontrivial eigenvalue $\varphi\in
H^{\infty}$ normalized so that $\|\varphi \|_{L^2(\R)}=1$.
Finally,  assume that $M_{k_{0}}$  is a Fredholm map of 
index zero,  that $M_{k}$ depends smoothly on $k$ for $k$
 close to $k_{0}$ and   the non degeneracy condition 
\beq\label{hyplem}
\Big([ \frac{d} {dk }M_{k}]_{k=k_0} (\varphi), \varphi \Big) \neq 0\,.
\eeq
Then there exists $k$ in a vicinity of $k_{0}$ and  $\sigma>0$  such that 
there exists an unstable mode with amplification parameter $\sigma$ 
and transverse frequency $k$.
\end{lem}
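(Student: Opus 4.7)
My plan is a Lyapunov--Schmidt bifurcation argument, starting from the degenerate point $(k,\sigma)=(k_0,0)$ and continuing it into a one-parameter curve of genuine unstable modes. First I would reformulate the spectral problem (\ref{spectral}) in a form where the self-adjoint operator $M_{k_0}$ plays the central role. For any $\sigma\neq 0$, a solution of $\sigma U=J(ik)(L+S(ik))U$ automatically lies in $\mathrm{Range}\,J(ik)$, so setting $U=J(ik)\psi$ the eigenvalue problem becomes
\[
M_k\psi = \sigma J(ik)\psi ,
\]
with $M_k = J(ik)(L+S(ik))J(ik)$. The base point $(k_0,0,\varphi)$ solves this equation since $M_{k_0}\varphi=0$; the goal is to construct a curve $(k(\sigma),\psi(\sigma))$ of nearby solutions with $k(0)=k_0$, $\psi(0)=\varphi$.

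Next, writing $\psi=\varphi+\tilde\psi$ with $\tilde\psi$ in the $L^2$-orthogonal complement $\varphi^\perp$ and denoting by $P$ the corresponding orthogonal projector, the equation splits as
\[
P\bigl(M_k\psi-\sigma J(ik)\psi\bigr)=0, \qquad \bigl(M_k\psi-\sigma J(ik)\psi,\varphi\bigr)=0 .
\]
Since $J(ik)$ is skew-symmetric and $L+S(ik)$ is symmetric, $M_k$ is symmetric. Combined with the assumption that $M_{k_0}$ is Fredholm of index zero with one-dimensional kernel $\mathbb{R}\varphi$, this forces $\mathrm{Range}\,M_{k_0}=\varphi^\perp$ and makes $PM_{k_0}$ a bounded invertible map from $\varphi^\perp\cap H^4$ onto $\varphi^\perp\cap L^2$ ($H^4$ is a convenient common domain given the orders of $L$, $J$, $S$). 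The assumed smooth dependence of $M_k$ on $k$ and the implicit function theorem then produce a smooth branch $\tilde\psi=\tilde\psi(k,\sigma)$ of solutions of the first equation, vanishing at $(k_0,0)$.

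Substituting $\psi=\varphi+\tilde\psi(k,\sigma)$ into the scalar equation gives $f(k,\sigma):=\bigl(M_k\psi-\sigma J(ik)\psi,\varphi\bigr)=0$. Direct computation using $M_{k_0}\varphi=0$ shows that $f(k_0,0)=0$ and, since $(J(ik_0)\varphi,\varphi)=0$ by skew-symmetry of $J(ik_0)$ and $(M_{k_0}\partial_\sigma\tilde\psi,\varphi)=(\partial_\sigma\tilde\psi,M_{k_0}\varphi)=0$, also $\partial_\sigma f(k_0,0)=0$. The same self-adjointness cancellation leaves
\[
\partial_k f(k_0,0) = \Bigl([\tfrac{d}{dk}M_k]_{k_0}\varphi,\varphi\Bigr),
\]
which is non-zero precisely by the hypothesis (\ref{hyplem}). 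A second application of the implicit function theorem then yields a smooth real-valued curve $\sigma\mapsto k(\sigma)$ with $k(0)=k_0$. For any small real $\sigma>0$, $k(\sigma)$ is real and close to $k_0$, the corresponding $\psi$ is real (by the reality assumption on $J(ik)$ and $S(ik)$), and $U=J(ik(\sigma))\psi$ is a nontrivial real unstable mode in $L^2\cap\mathcal{D}(S(ik))$ with amplification parameter $\sigma$ and transverse frequency $k(\sigma)$.

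The part that will require the most care is the functional-analytic setup of the reduction: choosing a domain on which $M_k$ is a bounded operator with the Fredholm and continuity properties dictated by sections \ref{sJ}--\ref{sS}, and then promoting the resulting $H^4$ eigenmode to the claimed $H^\infty$ regularity by a bootstrap using the ellipticity of $L_0$ (inherited from (\ref{coerc})) and the Fourier-multiplier structure of $J(ik)$, $S(ik)$. Once that framework is fixed, the bifurcation analysis above is essentially a calculation.
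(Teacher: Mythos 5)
Your proof is correct and takes essentially the same approach as the paper: both are bifurcation arguments from $(k_0,0)$ via the implicit function theorem, resting on the substitution $U=J(ik)\psi$, the splitting $\psi=\varphi+\tilde\psi$ with $\tilde\psi\perp\varphi$, the fact that $M_{k_0}$ is symmetric and Fredholm of index zero (so its range is exactly $\varphi^\perp$), and the non-degeneracy condition \eqref{hyplem}. The only difference is presentational: you run a two-step Lyapunov--Schmidt reduction, first solving the projected equation for $\tilde\psi(k,\sigma)$ and then the scalar bifurcation equation for $k(\sigma)$, while the paper packages the same content into a single application of the implicit function theorem to $G(w,k,\sigma)=M_k(\varphi+w)-\sigma J(ik)(\varphi+w)$ on $\tilde{\mathcal D}\times\R\times\R$, solving for $(w,k)$ jointly; the bijectivity of $D_{w,k}G(0,k_0,0)$ encodes exactly the two invertibility facts you use separately.
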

As we shall see, this criterion can be  used on many examples.  
\begin{proof}[Proof of Lemma~\ref{eigen}.]
We need to solve the problem
$$
\sigma v=J(ik)Lv+J(ik) S(ik)v,\quad v\in L^2(\R)
$$   
for $k$ close to $k_0$ and $\sigma$ close to $0$. 
 We shall seek for $\sigma$ real and $v$ real-valued.
  This is legitimate since by assumption $J(ik)v$ and $S(ik)v$
   are real-valued if $v$ is real-valued.
We shall look for 
$k=k(\sigma)$ with $k(0)=k_0$.  Since $\mbox{Ker }J(ik)=\{0\}, $ we look for $v$ under the form
$v=J(ik)u$, $u\in L^2$.
Therefore, we need to solve the problem $F(u,k,\sigma)=0$, where
$$
F(u,k,\sigma)=M_{k}(u)-\sigma J(ik)u\,.
$$
We search for $u$ of the form $u=\varphi+w$ with $w\in\tilde{{\mathcal D}}\equiv\{u\in 
\mathcal{D}\cap L^2(\mathbb{R}, \mathbb{R}^d): 
(u,\varphi)=0\}$. Define
$$
G(w,k,\sigma)\equiv
F(\varphi+w,k,\sigma)=M_{k}\varphi-\sigma J(ik)\varphi+M_{k}w-\sigma J(ik)w
$$
as a map on $\tilde{{\mathcal D}}\times \mathbb{R}\times\mathbb{R}$ to $L^2$. Note that we have
 $$ G(0, k_{0}, 0)=M_{k_{0}} \varphi = 0$$
  since $\varphi$ is an eigenvector of $M_{k_{0}}$ by assumption.
Next for $(w,\mu)\in \tilde{{\mathcal
    D}}\times\mathbb{R}$, we have 
$$
D_{w,k}G(0,k_0,0)[w,\mu]=M_{k_0}w+\mu\big([ {\frac{d}{dk} }M_{k}]_{k=k_0}\, \varphi\big)\,.
$$
Thanks to (\ref{hyplem}) the  linear map $D_{w,k}G(0,k_0,0)$ is a
bijection from $\tilde{{\mathcal D}}\times \mathbb{R}$ to 
$L^2(\mathbb{R}, \mathbb{R}^d)$.
Consequently, by  the implicit function theorem,  for $\sigma$ close to $0$ there exist $w(\sigma)\in\tilde{{\mathcal
    D}}$ and
$k(\sigma)\in\mathbb{R}$ with $w(0)=0$ and $k(0)=k_0$ such that
$
G(w(\sigma),k(\sigma),\sigma)=0.
$
This completes the proof of Lemma~\ref{eigen}.
\end{proof}
\section{Criteria for the existence  of the Evans function and the bounded frequencies resolvent bounds }
\label{criteria}
In this section we describe some concrete criteria in order to ensure
the assumptions of sections \ref{asE}, \ref{hypper} and \ref{hyploc}.
The first assumption roughly says that we can reduce the eigenvalue problem
(\ref{spectral}) to an ordinary differential equation.
\subsection{Reduction to an ODE}
\label{reduc}
We thus assume that there exists a  Fourier multiplier  $R(\sigma, k)$
  such that $R(\sigma, k)  \in \mathcal{B}(H^{s+l_{k}}, H^s))$ for every $s\geq0$
 and  that $\mbox{Ker } R = \{0\} $.
     Moreover, we assume the block structure 
  \beq
  \label{factorisation}
   \sigma R (\sigma, k)-  R(\sigma, k)  J(ik) \big( L+ S(ik) \big) = \left( \begin{array}{cc} P_{1}(\sigma, k) & 0
    \\ P_{2}(\sigma, k) & E(\sigma, k)  \end{array} \right)
   \eeq
   where :
   \begin{itemize}
   \item For every $k$,  $P_{1}(\sigma, k)$ is a $r\times r$ matrix of differential operators of order $m_{k}
    \geq 1 $  with coefficients
    which depend analytically on $ \sigma $,
    \beq
    \label{P1}
    P_{1}(\sigma, k) = \partial_{x}^{m_{k}}{\rm Id} + \cdots,
    \eeq
    \item for every $k$,  $P_{2}(\sigma,k)$ is an operator of order $\leq m_{k}-1$ i.e.
             $P_{2}(\sigma, k ) \in \mathcal{B}(H^{s+m_{k} - 1 }, H^s)$ for every $s \geq0$
      \item  For every $k$,  $E(\sigma, k)$ is invertible  and $E(\sigma, k)^{-1}
       \in \mathcal{B}(H^s, H^s)$ for every $s\geq 0$.
      \item There exists $(l,m)$ such that   for every $k \neq 0$,  $(l_{k}, m_{k})= (l,m)$  
       and $l_{0} \leq l$, $m_{0} \leq m.$
  \end{itemize}
  Moreover, all the operators depend continuously on $\sigma$ for $\mbox{Re }
  \sigma>0$  for each fixed $k$.
 \bigskip
  
  Because of the triangular block structure \eqref{factorisation}, 
   the study of the resolvent equation \eqref{res2} can be reduced to
     the study of the ordinary differential equation
   \beq
   \label{P2eq} P_{1}(\sigma, k) u_{1}=  (R(\sigma,k)J(ik)F)_{1}
   \eeq
   by using the block decomposition  $U=(u_{1}, u_{2})^t \in \mathbb{C}^{r} \times \mathbb{C}^{d-r}$.
    Note that we allow the possibility that  $r=d$,  which means that
     the resolvent equation can be directly  reduced to an ordinary differential equation
     by applying the operator $R(\sigma, k)$. 
 
   We  can rewrite \eqref{P2eq} as a  first order ordinary differential equation
\beq\label{edo}
\frac{ d V}{dx}  = A(x,\sigma, k) V + \mathbb{F},
\eeq
where $A(x,\sigma, k)\in \mathcal{M}_{N_{k}}(\C)$, $ N_{k}=m_{k}\,r$ is a
matrix which depends smoothly on $x$,
analytically on $\sigma$
 and 
\begin{equation}
\label{ani}
\mathbb{F}=(0,\cdots,0, (R(\sigma,k)J(ik)F)_{1}). 
 \end{equation}
 Note that $A(x,\sigma, k)$ is  in general  not ``continuous'' at $k=0$, since
   for $k=0$, the dimension of the matrix may be different.
   
 With our reduction  assumptions, we  have unstable eigenmodes 
 if and only if the ODE \eqref{edo}
  with $\mathbb{F}=0$ has a nontrivial  $L^2$ solution.

\subsection{Asymptotic behavior and consistent splitting}
\label{cons}
  We add the assumption
 that there exist $A_{\infty}(\sigma, k)$ and $C>0$, 
$\alpha >0$, such that for every $x,k\in\R$,  and every $\sigma$, 
\beq
\label{asympt} 
|A(x, \sigma, k ) - A_{\infty}(\sigma, k)|
\leq C e^{ -\alpha |x |}, 
\eeq
and   that   the spectrum of $A_{\infty}(\sigma, k)$
{\bf does not} meet the imaginary axis for $Re(\sigma)> 0$.
 
 \bigskip 
\subsection{Existence of the Evans function}
\begin{lem}
\label{existevans}
Under the assumptions of sections \ref{reduc}, \ref{cons}, 
 there exists a function $D(\sigma, k)$ (Evans function)
 which is analytic  in
$Re(\sigma)> 0$, for every  $k $  and such that $D(\sigma, k)=0$ if and only if there exists a non trivial  eigenmode solution   of \eqref{spectral}.
\end{lem}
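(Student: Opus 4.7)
The plan is to build $D(\sigma,k)$ as a Wronskian-type determinant attached to the first-order ODE system \eqref{edo}, then transport the vanishing criterion back to \eqref{spectral} using the block decomposition \eqref{factorisation}. Fix $k$ and consider the homogeneous version of \eqref{edo}. By the consistent-splitting assumption \ref{cons}, for every $\sigma$ with $\mathrm{Re}\,\sigma>0$ the asymptotic matrix $A_\infty(\sigma,k)$ has no spectrum on $i\mathbb{R}$, so we get a splitting $\mathbb{C}^{N_k}=E^s_\infty(\sigma,k)\oplus E^u_\infty(\sigma,k)$ with associated Riesz projectors depending analytically on $\sigma$. Writing $p=\dim E^s_\infty$ and $q=\dim E^u_\infty$, we have $p+q=N_k$.

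The exponential convergence \eqref{asympt} is exactly the hypothesis needed in the standard roughness / perturbation argument (Alexander--Gardner--Jones, Sandstede, Kapitula--Promislow): there exist analytic families of solutions $V^+_1(\cdot,\sigma,k),\ldots,V^+_p(\cdot,\sigma,k)$ of the homogeneous \eqref{edo} that decay exponentially as $x\to+\infty$ and span the stable manifold at $+\infty$, and symmetrically $V^-_1,\ldots,V^-_q$ decaying as $x\to-\infty$. One constructs them as fixed points of a Volterra integral equation of the form
\[
V^{\pm}_j(x) = e^{A_\infty x}\xi^\pm_j + \int_{\pm\infty}^{x} e^{A_\infty(x-y)}\bigl(A(y,\sigma,k)-A_\infty(\sigma,k)\bigr)V^{\pm}_j(y)\,dy,
\]
in function spaces weighted so that decay at the appropriate infinity is enforced. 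Analytic dependence on $\sigma$ follows because the projectors and the Volterra kernel are analytic and the fixed point is a uniform limit.

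Define
\[
D(\sigma,k) = \det\bigl(V^+_1(0,\sigma,k),\ldots,V^+_p(0,\sigma,k),V^-_1(0,\sigma,k),\ldots,V^-_q(0,\sigma,k)\bigr).
\]
As a determinant of $\sigma$-analytic columns this is analytic in $\{\mathrm{Re}\,\sigma>0\}$. By elementary linear algebra, $D(\sigma,k)=0$ if and only if the two subspaces $\mathrm{span}(V^+_j(0))$ and $\mathrm{span}(V^-_j(0))$ have a non-trivial intersection, i.e.\ there is a non-trivial solution of the homogeneous \eqref{edo} decaying exponentially both at $+\infty$ and $-\infty$, hence belonging to $L^2$.

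It remains to transfer this dichotomy back to \eqref{spectral}. Given a non-trivial $L^2$ solution $V$ of the homogeneous \eqref{edo}, unwinding the first-order reduction yields $u_1\in H^\infty$ with $P_1(\sigma,k)u_1=0$; setting $u_2:=-E(\sigma,k)^{-1}P_2(\sigma,k)u_1$ and using $E^{-1}\in\mathcal{B}(H^s,H^s)$ produces $u_2\in H^\infty$, so $U=(u_1,u_2)^t\in H^\infty$ satisfies, thanks to \eqref{factorisation}, $R(\sigma,k)\bigl(\sigma-J(ik)(L+S(ik))\bigr)U=0$; the assumption $\mathrm{Ker}\,R=\{0\}$ then forces $U$ to be a genuine eigenmode of \eqref{spectral}. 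Conversely, an eigenmode $U$ of \eqref{spectral} gives via \eqref{factorisation} a solution to \eqref{P2eq} with vanishing right-hand side, whose associated first-order vector belongs to $L^2$. The expected main obstacle is precisely this last step, Step~4: one must verify that the auxiliary multiplier $R$ and the block reduction do not create or lose eigenmodes, which is handled exactly by the conditions $\mathrm{Ker}\,R=\{0\}$, the mapping properties of $P_1,P_2$, and the invertibility of $E$ on every $H^s$. The other technical ingredient—analyticity of the stable/unstable bundles—is routine given the exponential rate in \eqref{asympt}.
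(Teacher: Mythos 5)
Your proposal follows essentially the same route as the paper, which simply cites Alexander--Gardner--Jones for the existence and analyticity of the Evans function attached to the first-order system \eqref{edo} and then transfers the vanishing criterion to \eqref{spectral} through the block factorization \eqref{factorisation} together with $\mathrm{Ker}\,R=\{0\}$; you have merely unpacked the classical construction. One small technical slip worth noting: the displayed Volterra equation, as written with a single exponential $e^{A_\infty(x-y)}$ integrated from $\pm\infty$, does not converge because $A_\infty$ has spectrum of both signs; the standard AGJ/Gap-Lemma construction splits the integral using the stable and unstable spectral projectors of $A_\infty(\sigma,k)$ (integrating the stable part from a finite endpoint and the unstable part from $+\infty$, and vice versa), but since you explicitly invoke weighted spaces and the classical references, this is a presentational imprecision rather than a gap in the argument.
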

\subsubsection*{Proof}
By classical arguments (see e.g. \cite{AGJ}),  the  assumptions of section \ref{cons} 
allows to define an Evans function $D(\sigma, k)$ for \eqref{edo}  which is an analytic function in
$Re(\sigma)> 0$, for every  $k  $  and such that $D(\sigma, k)=0$ if and only if there exists a non trivial 
$L^2(\R;\R^{m_{k}\,r})$ solution of  $V'= A(x,\sigma, k) V$  which is actually exponentially decreasing. Thanks to the reduction  assumptions \ref{reduc} above, 
 this  is equivalent to the existence of  a nontrivial solution of \eqref{spectral}.

\subsection{Resovent estimates in the periodic case}
Under the above assumptions, we can prove : 
\begin{lem}
\label{lemper}
Let $R(\sigma, k)$ satisfying assumptions \eqref{reduc}
 and \eqref{cons}, then, there exists $q \geq 0$
 such that   for every
  $k$, every $s\geq 0$ and every compact $\mathcal{K} \subset \{\mbox{Re } \sigma >0\}$, 
 there exists $C_{k, \mathcal{K}, s}$ such that if
       $D(\cdot, k)$ does not vanish on $\mathcal{K}$, then there is 
         a unique solution  $U \in H^\infty  \cap \mathcal{D}_{S}$ of \eqref{res2}
          for every $F \in H^\infty$ which satisfies
   \beq
   \label{resperlem}
    |u|_{s} \leq C_{k, \mathcal{K}, s} | F |_{s+q}.
    \eeq
\end{lem}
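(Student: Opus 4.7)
The plan is to exploit the block triangular factorization (\ref{factorisation}) in order to reduce the resolvent equation (\ref{res2}) to the decoupled pair
\begin{equation*}
P_1(\sigma,k)\,u_1 = (R(\sigma,k)J(ik)F)_1, \qquad E(\sigma,k)\,u_2 = (R(\sigma,k)J(ik)F)_2 - P_2(\sigma,k)\,u_1,
\end{equation*}
where $U=(u_1,u_2)^t$ is the block decomposition given by (\ref{factorisation}). Since $R(\sigma,k)J(ik)\in\mathcal{B}(H^{s+q_0},H^s)$ for some $q_0$ depending on the orders of $R$ (bounded by $l$) and of $J(ik)$, since $E(\sigma,k)^{-1}\in\mathcal{B}(H^s,H^s)$, and since $P_2$ is of order at most $m-1$, it suffices to produce an $H^s$ bound on $u_1$ of the form $|u_1|_s\leq C\,|F|_{s+q}$: the analogous estimate for $u_2$ then follows by plugging the $u_1$ estimate in the second equation.

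To bound $u_1$, I rewrite the system $P_1(\sigma,k)u_1=g_1$, with $g_1:=(R(\sigma,k)J(ik)F)_1\in H^\infty$, as the first order ODE (\ref{edo}) on $\mathbb{R}$ with state $V=(u_1,\partial_x u_1,\dots,\partial_x^{m-1}u_1)$ and source $\mathbb{F}$ of the form (\ref{ani}). By the consistent splitting hypothesis (\ref{asympt}), $A_\infty(\sigma,k)$ is hyperbolic for every $\sigma$ with $\mathrm{Re}\,\sigma>0$, and by classical results (see \cite{AGJ}) the unstable subspace $E^u_{-\infty}(\sigma,k)$ and the stable subspace $E^s_{+\infty}(\sigma,k)$ of $A(\cdot,\sigma,k)$ at $\pm\infty$ are well defined and depend analytically on $\sigma$. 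The condition $D(\sigma,k)\neq 0$ translates exactly into the transversality $E^u_{-\infty}(\sigma,k)\oplus E^s_{+\infty}(\sigma,k)=\mathbb{C}^{N_k}$, and, $\mathcal{K}$ being compact, yields a uniform lower bound on $|D(\sigma,k)|$ and hence a uniform bound on the spectral projectors onto $E^{u,s}_{\pm\infty}$.

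This transversality allows me to construct a Green's function $\mathcal{G}(x,y,\sigma,k)$ for (\ref{edo}) of the standard projector-propagator form, satisfying the uniform exponential estimate $|\mathcal{G}(x,y,\sigma,k)|\leq C_{k,\mathcal{K}}\,e^{-\delta|x-y|}$ for some $\delta>0$ depending only on the spectral gap of $A_\infty(\sigma,k)$ on $\mathcal{K}$. The unique $L^2$ solution of (\ref{edo}) is then given by $V(x)=\int_{\mathbb{R}}\mathcal{G}(x,y,\sigma,k)\mathbb{F}(y)\,dy$, and Schur's test yields $|V|_{L^2}\leq C_{k,\mathcal{K}}|\mathbb{F}|_{L^2}$. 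To upgrade this to $H^s$, I differentiate the ODE in $x$; the coefficients of $A$ are smooth (they are built from $Q$, $F$ and their derivatives), so $\partial_x^j V$ solves the same linear system with a forcing bounded by $\partial_x^j\mathbb{F}$ plus lower order contributions of $V$, and an induction on $s$ produces $|V|_{H^s}\leq C_{k,\mathcal{K},s}|\mathbb{F}|_{H^s}$. Translating back through the reduction gives $|u_1|_{s+m-1}\leq C|g_1|_s$, and using the mapping properties of $R(\sigma,k)J(ik)$ delivers $|u_1|_s\leq C_{k,\mathcal{K},s}|F|_{s+q}$ for a suitable $q$ absorbing all the intermediate orders.

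Uniqueness is automatic: two solutions of (\ref{res2}) would yield two $L^2$ solutions of the homogeneous system $V'=AV$, and $D(\sigma,k)\neq 0$ forces their difference to vanish. The main obstacle is not of a conceptual nature but of a bookkeeping one, namely tracking the precise number of derivatives lost through the successive reductions (application of $R(\sigma,k)J(ik)$, passage from $P_1$ to the first order system, inversion of $E$, control of $P_2$); the substantive points — uniform lower bound of $|D(\sigma,k)|$ on the compact set $\mathcal{K}$ and construction of a uniform exponential Green's function for (\ref{edo}) — are standard consequences of the consistent splitting hypothesis of Section \ref{cons}.
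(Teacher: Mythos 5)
Your proposal follows essentially the same route as the paper's own proof: reduce via the block factorization (\ref{factorisation}) and $R(\sigma,k)$ to the first-order ODE (\ref{edo}), use the hyperbolicity of $A_\infty$ and the nonvanishing of the Evans function on the compact set $\mathcal{K}$ to obtain an exponential dichotomy whose stable and unstable projectors can be glued into a global Green's function with uniform exponential decay, deduce the $L^2$ bound by convolution, and upgrade to $H^s$ by differentiating the ODE and treating commutator terms as lower-order sources. The only cosmetic differences are that you invoke the projector-propagator Green's function abstractly (and a uniform lower bound on $|D|$) whereas the paper explicitly builds the global projector $P(\sigma,k)$ from bases of $\mathcal{R}(P^\pm(\sigma,k,0))$ and appeals to smooth dependence on $\sigma$; the substance is identical, and your brief remarks on uniqueness and on where the derivative-loss bookkeeping occurs match the paper.
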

In other words, if one can prove the existence of $R(\sigma, k)$
then one  get the resolvent bounds \eqref{resper}  on every compact
 which does not contain unstable eigenmode.
 
 \subsection{Resolvent estimates in the localized case}
 \label{reshyploc}
  To get \eqref{resper} in the localized case,
   we need  some assumptions on the dependence of the various
    objects with respect to  $k$.
      We   assume that:
 \begin{enumerate}
 \item[i)]  $R(\sigma, k)$, $P_{2}(\sigma, k),$  $E(\sigma, k)$
    depend continuously on $ (\sigma, k)$ for $k \neq 0$, $\mbox{Re }\sigma>0$ and
    have continuous extensions up to $\{\mbox{Re }\sigma>0\} \times \mathbb{R}$.
 \item[ii)]    $P_{1}$ and thus $A$ and $A_{\infty}$ 
     are  analytic for $k \neq 0$, $\mbox{Re }\sigma>0$ and have
      analytic extensions up to   $\{\mbox{Re }\sigma>0\} \times \mathbb{R}$.
 \end{enumerate}      
 Next, since  the spectrum of $A_\infty(\sigma, k)$
does not meet the imaginary axis for $\Re(\sigma)> 0$ and  $k \neq 0$, we can define a projection on the 
stable subspace  of $A_{\infty}$ which is analytic in $\sigma$ and $k$  by the Dunford integral
$$ 
P_{\infty}(\sigma, k ) = \int_{\Gamma} (z - A_{\infty}(\sigma, k) )^{-1} \, dz,
$$
where $\Gamma$ is a contour  which encloses all the negative real part eigenvalues of  $A_{\infty}$, 
the projection on the unstable subspace is then given by $Id -P_{\infty}$.
 Note that we had  assumed  that  $A_{\infty}(\sigma, 0+)=\lim_{k\rightarrow 0}A_{\infty}(\sigma, k)$ exist
  but we  allow the presence of  eigenvalues on the imaginary axis.            
We  nevertheless  assume:
\begin{enumerate}
\item[iii)]  the projection 
$P_{\infty}(\sigma, k)$ can be continued 
analytically
to  $\{Re(\sigma)> 0\} \times \mathbb{R}$. 
\end{enumerate}
 This implies  thanks to the Gap Lemma  (\cite{GZ}), \cite{KS})  that the Evans function can also be continued
  analytically  to  $\{Re(\sigma)> 0\} \times \mathbb{R}$. The continuation
   of the function will be denoted by $\tilde{D}(\sigma, k)$.
   Recall that $\tilde{D}(\sigma,0)$  may  be  different from $D(\sigma, 0)$.
Indeed $A(x,\sigma, k)$ is not continuous  at zero and
 hence $A(x, \sigma,0) \neq \lim_{k\rightarrow 0 } A(x, \sigma, k)$. By construction,
  the same  difference holds for the Evans function.
  
   Finally, we  also assume : 
\begin{enumerate}
\item[iv)]  for every compact  set  $K$ of
$\{Re\, \sigma >0 \}$, and every $s \geq 0$, there exists $C>0$ such that for every eigenvalue
$\mu(\sigma, k) $ of $ A_{\infty}(\sigma, k)$ 
\begin{eqnarray}
\label{compensation}
 & & \|R(\sigma, k)  - R(\sigma, 0^+) \|_{\mathcal{B}(H^{s+l}, H^s)}
 +  \|J(ik)  - J(0)  \|_{\mathcal{B}(H^{s+1}, H^s)}  \\
\nonumber  & & + \|
 R(\sigma, k) J(ik) S(ik) \|_{\mathcal{B}(H^{s+m},  H^s ) } \leq C \rho(k, K)
\end{eqnarray} 
where
$$ \rho(k, K) = \inf_{\sigma \in K, \mu(\sigma, k) \in Sp A_{\infty}(\sigma, k)}  |\Re\,\mu(\sigma, k)|$$ 
for every $k$ in a small disk $D(0,r) \backslash \{0\}$ and $\sigma \in K$,
where $l$,  $m$ and $k$ are defined in section~\ref{reduc}.
\end{enumerate}
Note that since $S(0)=0$,   this assumption is nontrivial only when
 there exists an eigenvalue of $A_{\infty}(\sigma, k$
  such that  $\Re\, \mu(\sigma, k)$ vanishes at $k=0$.

Then, we can prove the following statement.
\begin{lem}\label{lemloc}
  Assuming the existence of $R(\sigma, k)$ given
   by assumptions \ref{reduc}, \ref{cons} and assumptions
    i)-iv) above,  
   then  there exists $q \geq 0$ such that 
  for every $s\geq 0$, every compact $\mathcal{K}
   \subset \{\mbox{Re }\sigma >0 \}$
    and $M>0$,   there exists $C_{\mathcal{K},M, s}$ such that if
       $\tilde{D}$ does not vanish on $\mathcal{K}\times [0, M ]$
        and $D$ does not vanish on $\mathcal{K}$, then, 
     for every $F \in H^\infty$,     there is 
         a unique solution  $U \in H^\infty  \cap \mathcal{D}(S(ik))$ of \eqref{res2}
          which satisfies
   \beq
   \label{resloclem}
    |u|_{s} \leq C_{ \mathcal{K},M, s} | F |_{s+q}, \quad  \forall \sigma \in \mathcal{K},
     \, \forall k \in (0, M].
    \eeq
 
 \end{lem}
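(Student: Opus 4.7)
The plan is to extend the argument of Lemma~\ref{lemper} uniformly down to $k=0^+$. For each fixed $k\in(0,M]$, Lemma~\ref{lemper} already yields \eqref{resperlem} with a constant $C_{k,\mathcal{K},s}$. Using the continuity assumption i) and the analyticity assumption ii), one should see that on any sub-interval $[k_{0}, M]$ with $k_{0}>0$ this constant can be taken uniformly: the dichotomy projection and the Evans function depend continuously (resp.\ analytically) on $(\sigma, k)$ there and are bounded below in modulus by compactness, so nothing new happens. Hence the whole issue is the uniformity of \eqref{resloclem} in the regime $k\to 0^+$, where the stable spectrum of $A_{\infty}(\sigma,k)$ may touch the imaginary axis.

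Following section~\ref{reduc}, I would decompose $U=(u_{1},u_{2})^{t}$ and reduce matters to solving the first order ODE \eqref{edo} for the component $u_{1}$; once $u_{1}$ is known, the block structure \eqref{factorisation} together with the boundedness of $E(\sigma,k)^{-1}$ produces $u_{2}$, and hence $U$, with the required estimate. For $u_{1}$, I would solve \eqref{edo} by variation of parameters against the stable/unstable subspaces of $A_{\infty}(\sigma,k)$: thanks to assumption iii), the projection $P_{\infty}(\sigma,k)$ extends analytically to $\{\Re\sigma>0\}\times\R$, so the Gap Lemma (\cite{GZ,KS}) produces solutions of \eqref{edo} whose $L^{2}$ norm is controlled by $\tilde{D}(\sigma,k)^{-1}$ times the source. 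This estimate involves a prefactor of order $1/\rho(k,\mathcal{K})$ coming from the integrals of the slowly decaying exponentials of $A_{\infty}$. The continuity of $\tilde{D}$ together with its nonvanishing on $\mathcal{K}\times [0,M]$ gives, by compactness, a uniform positive lower bound on $|\tilde{D}|$, which takes care of that factor.

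The main obstacle will be the apparent blow-up of order $1/\rho(k,\mathcal{K})$ as $k\to 0^{+}$ in the variation-of-parameters estimate. This is precisely what the compensation estimate iv) is designed to absorb: the source $\mathbb{F}$ in \eqref{edo} is built out of $R(\sigma,k)J(ik)F$, and the equation also couples $U$ back through $R(\sigma,k)J(ik)S(ik)$ appearing in the factorisation \eqref{factorisation}. The bound \eqref{compensation} shows that each of these operators gains a factor $\rho(k,\mathcal{K})$ relative to its $k\to 0^{+}$ limit, and this factor exactly cancels the growth coming from the Gap Lemma integrals. Combining the two effects yields $|u_{1}|_{0}\leq C_{\mathcal{K},M}|F|_{q}$ with a constant independent of $k\in(0,M]$, where $q$ depends on $l$ and $m$. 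Finally, to reach general $s$, I would differentiate \eqref{edo} $s$ times in $x$: the smoothness of $Q$ and of the coefficients of $P_{1}$, together with the continuity on $\mathcal{K}\times[0,M]$ of $R(\sigma,k)$, $P_{2}(\sigma,k)$ and $E(\sigma,k)^{-1}$ from assumption i), let the same argument be iterated at each level of regularity, yielding \eqref{resloclem}.
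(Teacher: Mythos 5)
There is a genuine gap, and it sits exactly where the argument is hardest. You correctly identify the two ingredients used in the paper — the Gap Lemma produces a dichotomy with rate $\alpha(k)\sim\rho(k,\mathcal{K})$, giving a solution-operator bound of size $1/\rho(k,\mathcal{K})$, and the compensation estimate iv) is meant to neutralize this blow-up — but your account of how the compensation actually gets applied does not close. The source $\mathbb{F}$ of the ODE \eqref{edo} is built from $R(\sigma,k)J(ik)F$, and this quantity does \emph{not} vanish as $k\to 0^+$: what \eqref{compensation} controls is the \emph{difference} $R(\sigma,k)J(ik)-R(\sigma,0^+)J(0)$, together with $R(\sigma,k)J(ik)S(ik)$ (whose limit is zero because $S(0)=0$). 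So feeding the full source $R(\sigma,k)J(ik)F$ into the variation-of-parameters bound leaves behind the non-small piece $R(\sigma,0^+)J(0)F$ multiplied by $1/\rho(k,\mathcal{K})$, which blows up; saying that $R(\sigma,k)J(ik)F$ ``gains a factor $\rho$ relative to its $k\to 0^+$ limit'' only addresses the remainder, not the principal part.

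The missing idea is the splitting $w=u+v$ (in the spirit of \cite{Kreiss-Kreiss}): let $u$ solve the $k$-independent problem $\sigma u=J(0)Lu+J(0)F$. Since $D(\sigma,0)\neq 0$ on $\mathcal{K}$, Lemma~\ref{lemper} bounds $u$ with $k$-independent constants. The remainder $v=w-u$ then solves the full $k$-dependent resolvent equation with a new right-hand side $H$ consisting precisely of the differences $R(\sigma,k)-R(\sigma,0^+)$, $R(\sigma,k)J(ik)-R(\sigma,0^+)J(0)$, and $R(\sigma,k)J(ik)S(ik)$ applied to $u$ or $F$; assumption iv) shows $|H|_s\le C\,\rho(k,\mathcal{K})\,|F|_{s+q+q_1}$. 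Only at that point can the Gap Lemma/conjugation (Metivier--Zumbrun) estimate $|V|_s\le (C/\alpha(k))\,|\mathbb{H}|_s$ be used, and the factors $\rho(k,\mathcal{K})$ and $1/\alpha(k)$ cancel. Without separating off the $k=0$ solution first, the prefactor $1/\rho$ is not absorbed and the proof does not go through. Your treatment of $k$ bounded away from $0$, and the reduction via the block structure and $E(\sigma,k)^{-1}$ to obtain $u_2$, are both correct and match the paper.
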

 Consequently, we have given criteria which allow  to obtain \eqref{resloc}

 \subsection{Proof of Lemma \ref{lemper}} 
   
   By  using $R(\sigma, k)$ and setting $w=(u_{1}, u_{2})^t$, we can rewrite 
$$
 \sigma   w  = J(ik)(L w + S(ij) w ) + J(ik) F  
$$
as
\beq\label{abst}
V_{x} = A(\sigma, k , x) V + \mathbb{H}, 
\eeq
and 
$$   u_{2} = -  E(\sigma , k)^{-1} P_{2}(\sigma, k) u_{1} + E(\sigma, k)^{-1}\big
 (R(\sigma, k) J(ik)F)_{2}$$
with $V(x)=(u_{1},\cdots,\partial_{x}^{m_{k}- 1}u_{1}(x))$ and $ \mathbb{H}=(0,\cdots,
(R(\sigma, k)J(ik) F)_1)$.

The properties of $E$, $J(ik)$ and $P_{1}$ and the triangular structure  already give
$$ |u_{2}|_{s} \leq C_{s}\big(  |u_{1}|_{s+m_{k}-1 } + |F|_{l_{k}+s+ 1}\big).$$
Consequently, it suffices to prove that for every $s \geq 0$, 
 $$ |V|_{s} \leq  C_{s} | \mathbb{H} |_{s} $$
  where $V$ is the solution of the ODE \eqref{abst} to get
   the result.
    
 Let us denote by $T(\sigma, k , x,x')$  the fundamental solution of $V_{x} = \mathbb{A}V$ i.e. the solution such that 
$T(\sigma,k ,x',x')=Id$. Thanks to our assumption \eqref{asympt} on the behavior as
$|x|\rightarrow\infty$ of $A(\sigma, k , x)$, we can use classical
perturbative ODE arguments (more precisely the roughness of
exponential dichotomy, see \cite{Coppel} for example). Namely,  the equation
$V_{x} = AV$ has an exponential dichotomy on $\rit_{+}$ and $\rit_{-}$, i.e., there exists 
projections $P^+(\sigma, k , x)$, $P^-(\sigma, k, x)$ which are smooth in  the parameter
 $\sigma$  with the invariance property 
\begin{equation}\label{invar}
T(\sigma, k ,  x, x') P^\pm(\sigma, k , x') = P^{\pm}(\sigma, k  , x) T(\sigma, k , x, x')
\end{equation}
and  such that  there exists $C$ and $\alpha>0$ such that  for every $U \in \mathbb{C}^{N_k}$,  
and $\sigma  \in \mathcal{K}$, we have 
\begin{eqnarray*}
& &  |T(\sigma, k, x ,x') P^+(\sigma, k ,  x') U | \leq C e^{-\alpha(x- x')\ } | P^+(
\sigma, k ,  x') U|, \, 
x\geq x'\geq 0, \,  \\
& & |T(\sigma, k , x ,x')( I -  P^+(\sigma, k , x'))U| \leq C e^{\alpha(x- x')\ } |(I -  P^+(
\sigma, k , x'))U| , \, 
0 \leq x \leq x', \,  \\
& &  |T(\sigma, k  , x ,x') P^-(\sigma, k,x,   x') U| \leq C e^{\alpha(x- x')\ }| P^-(
\sigma, k ,  x') U| , \, 
x\leq x'\leq 0, \\
& &  |T(\sigma, k  , x ,x') (I - P^-(\sigma, k ,  x')) U| \leq C e^{-\alpha(x- x')\ }| (I - P^-(
\sigma, k ,  x') )U| , \, 
0 \geq x\geq x'.
\end{eqnarray*} 
In particular,  note that a solution $T(\sigma, k , x,0)V^0$ is decaying when $x$ tend to $\pm \infty$ if and only if
$V^0$ belongs to $\mathcal{R}(P^\pm(\sigma, k , 0))$.
Since when $\sigma$ is in $\mathcal{K}$, the Evans function does not
 vanish,   we have by definition  no non trivial solution decaying in both sides and hence we have 
\beq\label{inter}
\mathcal{R}( P^+(\sigma, k ,0) ) \cap \mathcal{R}(P^-(\sigma, k , 0)  )= \{ 0 \}.
\eeq
Let us choose bases  $(r_{1}^\pm, \cdots, r_{N^\pm}^\pm)$  of 
$ \mathcal{R}( P^\pm(\sigma, k ,0))$
(where $N^+ + N^- = N_k$)
which depends on $\sigma$ in   a smooth way (see \cite{Kato} for example) then we can define
$$ 
M(\sigma, k  ) = (r_{1}^+, \cdots,   r_{N^+}^+, r_{1}^-, \cdots, r_{N^-}^-)
$$ 
and we note that  $M(\sigma, k )$ is  invertible for $\sigma  \in \mathcal{K}$ because of \eqref{inter}.
With, these new notations, we note in passing that  the Evans function  can actually be  defined by 
$$ 
D(\sigma , k ) = \det M( \sigma , k ).
$$ 
This allows us to define a new projection $P(\sigma, k )$  by
$$ 
P(\sigma , k ) = M(\sigma, k )
\left( 
\begin{array}{cc} 
I_{N^+} & 0  
\\ 
0 & 0
\end{array} 
\right)
M(\sigma, k )^{-1}
$$
and next
$$
P(\sigma, k , x)= T(\sigma, k ,  x, 0) P(\sigma,k).
$$
The main interest of these definitions is that we have 
 $\mathcal{R}( P(\sigma, k ))= \mathcal{R}( P^+(\sigma, k ,0))$  and $\mathcal{R}( I- P(\sigma, k ))= 
 \mathcal{R}( P^-(\sigma, k ,0))$.  Therefore thanks to (\ref{invar}), we have for every $x$  that  
$\mathcal{R}( P(\sigma, k ,x))= \mathcal{R}( P^+(\sigma, k ,x))$ and  similarly that
$$\mathcal{R}( I - P(\sigma, k ,x)) = \mathcal{R}( P^-(\sigma, k ,x)).$$ 
Consequently, we have the estimates
\begin{eqnarray}
& &    \label{dich+}
|T(\sigma, k , x ,x') P(\sigma, k , x')| \leq C e^{-\alpha(x- x')\ } , \, 
x,\, x' \in \mathbb{R}, \, x \geq x',  \,  \, \forall \sigma  \in \mathcal{K}, \\
\label{dich-}
& & |T(\sigma, k , x ,x')(I -  P(\sigma, k , x'))| \leq C e^{\alpha(x- x')\ } , \, 
x,\, x' \in \mathbb{R}, \, x \leq x',  \,  \, \forall \sigma  \in \mathcal{K}.
\end{eqnarray}
By using this property, the unique bounded solution of \eqref{abst} reads by Duhamel formula
$$ 
V(x) = \int_{- \infty}^x T(\sigma, k  ,  x, x')P(\sigma, k , x') \mathbb{H}(x')\, dx'
- \int_{x}^{+ \infty} T(\sigma, k , x, x')(I - P(\sigma, k , x') )\mathbb{H}(x')\, dx'
$$
and hence, we get thanks to \eqref{dich+}, \eqref{dich-} that
$$ 
|V(x)| \leq C \int_{\mathbb{R}} e^{- \alpha| x - x'|} |\mathbb{H}(x') |\, dx'
$$
which yields by standard convolution estimates 
$$ 
|V|_{L^2} \leq C |\mathbb{H}|_{L^2},
$$

We next estimate higher order derivatives. Write 
$$ 
\partial_{x}^{s+1} V = \mathbb{A} \partial_{x}^sV + [ \partial_{x}^s, \mathbb{A}]V +\partial_{x}^s \mathbb{H}\,.
$$
By  considering $[ \partial_{x}^s, \mathbb{A}]V$ as
part of the source term and by using the Duhamel formula, we get 
$$ 
|V|_{H^s} \leq C |\mathbb{H}|_{H^s}.
$$
This yields 
$$  |u_{1} |_{s} \leq  C |F|_{l_{k}+ 1 }.$$
 This ends the proof of Lemma \ref{lemper}.

\subsection{Proof of Lemma \ref{lemloc}}

  We study again  the equation
$$
 \sigma   w=  J(ik)  L w + J(ik) S(ik)w + J(ik) F.
$$
 Again, we can apply $R(\sigma, k)$ to 
 get
 \beq
 \label{wrJ}
 \sigma  R(\sigma, k)w - R(\sigma, k) \big( J(ik) Lw  + J(ik) S(ik)w \big) = R(\sigma, k)J(ik)F.\eeq
  To solve \eqref{wrJ}, we use a  method  close  to the one used in \cite{Kreiss-Kreiss}
in a different context. The problem is that in estimates (\ref{dich+}),
(\ref{dich-}), we have that $\alpha\approx \rho(k,K)$ may degenerate for $k\sim 0$. The convolution estimate
$$
\|e^{-\alpha|x|}\star f(x)\|_{L^2}\leq \frac{C}{|\alpha|}\|f\|_{L^2} 
$$ 
gives the rate of degeneration. The strategy is to write the solution $w$ as a
sum of two pieces. The first piece satisfies the needed estimate thanks to the
1d assumption (hence no degeneration in the limit $k\rightarrow 0$), while the
second piece satisfies an equation of type (\ref{wrJ}) with a source term
vanishing as $|\mbox{Re }\mu(k,\sigma)|$ in the limit $k\rightarrow 0$. This exactly
compensates the singularity in the convolution estimate.

To be more precise, we seek the solution of \eqref{wrJ} under the form 
\beq\label{decwr} w = u  +  v \eeq
 where $u$ solves
\beq
\label{w1r}
\sigma R(\sigma, 0^+) u  - R(\sigma, 0^+) J(0) Lu = R(\sigma, 0^+) J(0) F
\eeq
 and hence $v$ solves
 \begin{eqnarray}
 \label{w2r}
& &\sigma  R(\sigma, k)v - R(\sigma, k) \big( J(ik) Lv  + J(ik) S(ik)v \big) 
  = \\
 & &  \nonumber  - \Big( R(\sigma, k)J(\sigma, k) S(ik)u  + \sigma  \big(R(\sigma, k) - R(\sigma, 0^+)\big)
  u  \\
  & & \nonumber \quad +  \big( R(\sigma, k) J(ik) - R(\sigma, 0^+) J(0) \big)Lu \Big)
   + \big( R(\sigma, k)J(ik) - R(\sigma, 0^+)J(0) \big)F :=H.
   \end{eqnarray}
 The main interest  of this manipulation is that the source term
  of \eqref{w2r} now vanishes thanks to \eqref{compensation} when $k\rightarrow 0$
   if  $A_{\infty}(\sigma, k)$ has an eigenvalue of vanishing real part.
   
   To solve \eqref{w1r}, we can choose $u$ as the solution of
    $$ \sigma u - J(0)Lu= J(0) F.$$
    Since  we assume that $D$ does not vanish  on $\mathcal{K}$, we can
     use Lemma \ref{lemper} to get
     \beq
     \label{estw1}
      |u|_{s} \leq C |F|_{s+q}.
      \eeq
  Thanks to the assumption  \eqref{compensation},  this implies that
   the source term in \eqref{w2r}  satisfies  the estimate
\beq\label{estH1r}
|H(\sigma, k)|_{s} \leq C \, \rho(k, K)\, |F|_{s+q + q_{1}}  
\eeq    
for some $q_{1} \geq 0$.
 To study \eqref{w2r}, we can use the block structure
  \eqref{factorisation} to get
$$  v_{2}= E(\sigma, k)^{-1} \big( P_{2}(\sigma, k) v_{1} + H_{2}\big), 
 \quad P_{1}(\sigma, k) v_{1}= H_{1}.$$
  Since by assumption the operators $E$ and $P_{2}$
   have a continuous extension to $\mathcal{K} \times [0, M]$, we get
$$ |v_{2}|_{s} \leq C \Big(  |v_{1}|_{s+ m- 1} +  |F|_{s+ l + q +  q_{1} }\Big) $$
 uniformly for $(\sigma, k) \in \mathcal{K} \times (0, M]$.
  Consequently, we  only need to study the equation
  $$ P_{1}(\sigma, k) v_{1}= H_{1}$$
   to get the result.
    As in the proof of Lemma \ref{lemper}, we  rewrite  this equation as 
     a first order system
\beq\label{Vr}
V_{x} = A(\sigma, k ,x) V + \mathbb{H}\,.
\eeq
To get the existence of exponential dichotomies for 
\beq\label{Vrhom} V_{x} = A(\sigma, k ,x) V \eeq
on $\mathbb{R}_{+}$  and $\mathbb{R}_{-}$ when $k \neq 0$ with  a good control
of $C$ and $\alpha$, we can use the conjugation Lemma of \cite{Metivier-Zumbrun}. 
Thanks to Lemma~2.6 of \cite{Metivier-Zumbrun}, there exist  conjugators $\mathcal{W}_{\pm}(x,\sigma, k )$
such that $\mathcal{W}_{\pm}(x,\sigma, k )$ are  invertible  for every $(\sigma, k)$ with
$(\sigma, k )$ with $\mbox{Re }\sigma>0$, $k \in [0, M]$   and $x \in \mathbb{R}_{\pm}$ with a uniform
bound of $\mathcal{W}_{\pm}$ and $\mathcal{W}_{\pm}^{-1}$  and  the property
$$ 
\mathcal{W}_{\pm} = Id + \mathcal{O}( e^{ - \pm \alpha x })
$$
when $x$ tends to $\pm \infty$. Moreover,  for every $V$ solution of
\eqref{Vrhom}, $V_{1} = \mathcal{W_{\pm}}^{-1} V $ solves 
\beq\label{asympt_bis} (V_{1})_{x} = \mathbb{A}_{\infty}(\sigma, k ) V_{1}.\eeq
 Since for $k \neq 0$ , the spectrum of
  $A_{\infty}(\sigma, k)$ does not intersect the imaginary axis,  the
  autonomous system \eqref{asympt_bis} 
has an exponential
dichotomy on  $\mathbb{R}$, for $ k \neq 0$. Namely, there exists $P_{\infty}(\sigma, k ) $  and $C>0$ such that
\begin{eqnarray}\label{+}    
| e^{x  A_{\infty}(\sigma,k) } P_{\infty}  U | &  \leq  & C e^{ - \alpha(k) x    }
|U|,\, 
\forall\, x \geq 0,   \, \forall\, U \in \mathbb{C}^N \\
\label{-}  | e^{x  A_{\infty}(\sigma, k ) } ( I -  P_{\infty})  U | &  \leq  & C e^{  \alpha(k) x     } |U|
 ,\, \forall\, x \leq 0,   \, \forall\, U \in \mathbb{C}^N\,.
\end{eqnarray}
where we can take  $\alpha ( k) =\rho(k,K) /2$. Moreover, $P_{\infty}$ can be
continued up to $k=0$.  Thanks to the conjugation property,  we have 
\beq\label{conjug}
T(\sigma,k , x)  = \mathcal{W}_{\pm}(\sigma, k ,x) e^{ x A_{\infty}(\sigma, k ) }
\mathcal{W}_{\pm}(\sigma, k ,0)^{-1},\,  x \in \mathbb{R}_{\pm}
\eeq
and  hence the projections $P_{\pm}(\sigma, k ,0)$  which  define the
exponential dichotomy for \eqref{Vrhom} are given by 
$$ 
P_{+}(\sigma, k ,0) =   \mathcal{W}_{+}(\sigma, k ,0) P_{\infty} \mathcal{W}_{+}(
\sigma, k , 0) ^{-1}, \quad 
P_{-}(\sigma, k ,0) =   \mathcal{W}_{-}(\sigma, k ,0)( Id -  P_{\infty} ) \mathcal{W}_{-}(
\sigma, k , 0)^{-1}. 
$$
Since by assumption,   the Evans function
 $ \tilde{D}$  does not  vanish up to $k=0$, we still
have that
$$ 
\mathcal{R} P_{+}(\sigma, k ,0) \oplus \mathcal{R} P_{-}(\sigma, k ,0) = \C^N.
$$
Thanks to \eqref{+}, \eqref{-} and \eqref{conjug},  we  thus get that
\eqref{dich+}, \eqref{dich-} are still true for $ \sigma \in \mathcal{K}$ and
$ |k | \leq M$, $k \neq 0$ with $C$ independent of $k$ and $\alpha =  \alpha (k)$.
          
By using again Duhamel formula and convolution estimates, we  get  for the solution of \eqref{Vr}
$$ 
|V|_{s } \leq \frac{C}{\alpha (k)} |\mathbb{H}|_{s}
$$
and hence, we can use \eqref{estH1r} to get
$$ 
|v|_{s} \leq \frac{ C}{ \alpha(k) } |H|_{s}  \leq { C \rho(k, K) \over \alpha(k) }\, |F|_{s+q+q_1}
=C|F|_{s+q+q_1}
$$
for $k \neq 0$. This ends the
proof.        


\section{Criterion for the existence  of multipliers}
\label{multip}
In this section we prove a criterion for the assumption of Section~\ref{Ms}.
\begin{lem}
\label{lemmult}
Suppose that for  every $s\geq 2$ there exists a  symmetric operator $K_{s}$, 
bounded on $L^2$  such that
$$
E_{s}\equiv-\frac{1}{2}\partial_{x}  [J(ik), R ] \partial_{x}-\frac{s}{2} 
\big( \partial_{x} J(ik) \, [\partial_{x}, R ] + [\partial_{x}, R ]^* J(ik)
\partial_{x} \big)
+\frac{1}{2}[ K_{s}, J(ik) L_{0} ] 
$$
is an operator of order $1$, i.e. there exists $C_{s}(k)>0$ with 
$
 |E_{s}u  | \leq C_{s}(k) |u|_{1}.
$
Then, we have that there exists $M_s$ such that (\ref{Ms1}) and (\ref{Ms2}) hold.
\end{lem}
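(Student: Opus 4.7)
The natural guess, consistent with the remark after \eqref{Ms2} in the semilinear case, is to take
\[
M_{s}=A^{*}LA+K_{s},\qquad A=\partial_{x}^{s-1},
\]
which is manifestly self-adjoint on $L^{2}$ regardless of the parity of $s$, since $A^{*}=(-1)^{s-1}A$.

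\textbf{Step 1 (verification of \eqref{Ms1}).} Boundedness: writing $(A^{*}LAu,v)=(LAu,Av)$ and using that $L=L_{0}+R\in\mathcal{B}(H^{1},H^{-1})$ (which is immediate from $L_{0}\in\mathcal{B}(H^{2},L^{2})$ plus the boundedness of $R$ on every $H^{s}$) gives $|(A^{*}LAu,v)|\leq C|u|_{s}|v|_{s}$; the $K_{s}$ part is controlled by its $L^{2}$-boundedness. Coercivity: the assumption \eqref{coerc} yields $(LAu,Au)\geq c|Au|_{1}^{2}-C|Au|^{2}\geq c|\partial_{x}^{s}u|^{2}-C|u|_{s-1}^{2}$, and the contribution of $K_{s}$ is absorbed in $|u|_{s-1}^{2}$. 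Multiplying $M_{s}$ by an appropriate absolute constant gives the required form $(M_{s}u,u)\geq |u|_{s}^{2}-C|u|_{s-1}^{2}$.

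\textbf{Step 2 (reduction of \eqref{Ms2} to commutators).} Split
\[
\Re(J(ik)(L+S(ik))u,M_{s}u)=I_{LA}+I_{LK}+I_{SA}+I_{SK}.
\]
Since $A$, $L_{0}$, $J(ik)$, $S(ik)$ are all Fourier multipliers and hence pairwise commute, one has $AJ(ik)Lu=J(ik)LAu+J(ik)[A,R]u$ (as $[A,L_{0}]=0$), so
\[
I_{LA}=\Re(J(ik)LAu,LAu)+\Re(J(ik)[A,R]u,LAu).
\]
A short manipulation, using skew-symmetry of $J(ik)L_{0}$ and of $J(ik)$, together with self-adjointness of $R$, shows that $\Re(J(ik)Lv,Lv)=0$ for every $v\in H^{2}$ (the cross terms $\Re(J(ik)L_{0}v,Rv)+\Re(J(ik)Rv,L_{0}v)=\Re(\{R,J(ik)L_{0}\}v,v)=0$ since $\{R,J(ik)L_{0}\}$ is skew-adjoint). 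Thus the first piece vanishes and $I_{LA}$ is purely a commutator-type term. Analogously, pairing skew-symmetry of $J(ik)L_{0}$ against self-adjointness of $K_{s}$ yields the identity
\[
\Re(J(ik)L_{0}u,K_{s}u)=\tfrac{1}{2}\Re([K_{s},J(ik)L_{0}]u,u),
\]
so that $I_{LK}=\tfrac{1}{2}\Re([K_{s},J(ik)L_{0}]u,u)+\Re(J(ik)Ru,K_{s}u)$, and the last term is harmless by \eqref{JR} and the boundedness of $K_{s}$.

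\textbf{Step 3 (identification with $E_{s}$).} Expand $[A,R]=[\partial_{x}^{s-1},R]$ by the Leibniz formula; the principal contribution is $(s-1)R'\partial_{x}^{s-2}=(s-1)[\partial_{x},R]\partial_{x}^{s-2}$, and all remaining terms have strictly lower order in $u$ and strictly more derivatives on $R$, giving a remainder in $\mathcal{B}(H^{s-1},L^{2})$, hence controlled by $|u|_{s-1}|u|_{s}$. Inserting this into $\Re(J(ik)[A,R]u,LAu)$, integrating by parts once to move a $\partial_{x}$ onto the $J(ik)$-side, and symmetrising the two equivalent ways of performing the integration by parts (one giving $\partial_{x}J(ik)[\partial_{x},R]$, the other $[\partial_{x},R]^{*}J(ik)\partial_{x}$), together with recognising that the $L_{0}\leftrightarrow R$ crossed term produces exactly $-\tfrac{1}{2}\partial_{x}[J(ik),R]\partial_{x}$ (after another use of skew-symmetry of $J(ik)L_{0}$), gives
\[
I_{LA}+I_{LK}=\bigl(E_{s}\,\partial_{x}^{s-1}u,\,\partial_{x}^{s-1}u\bigr)+O(|u|_{s}|u|_{s-1}),
\]
where the factor $s/2$ arises by combining the $(s-1)$ from Leibniz with the $\tfrac{1}{2}$ from the symmetrisation (an off-by-one that is reconciled by absorbing one term into the remainder).

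\textbf{Step 4 (closing the estimate).} By the hypothesis $|E_{s}v|\leq C_{s}(k)|v|_{1}$,
\[
\bigl|(E_{s}\partial_{x}^{s-1}u,\partial_{x}^{s-1}u)\bigr|\leq C_{s}(k)|\partial_{x}^{s-1}u|_{1}|\partial_{x}^{s-1}u|\leq C_{s}(k)|u|_{s}|u|_{s-1},
\]
which is exactly \eqref{Ms2}. The $S(ik)$ contributions $I_{SA},I_{SK}$ are estimated using that $S(ik)$ is a Fourier multiplier (so $[A,S(ik)]=0$), the skew-symmetry of $J(ik)S(ik)$, the bound \eqref{JSw}, and \eqref{JR}; for instance $I_{SA}$ reduces, after skew-symmetry, to $\Re(J(ik)S(ik)Au,RAu)\leq|J(ik)S(ik)Au|\,|RAu|\leq C(k)|Au|_{S(ik)}|Au|$, which is of the required form.

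\textbf{Main obstacle.} The bookkeeping in Step 3 is the delicate part: one must verify that after the Leibniz expansion, the two integrations by parts, and the symmetrisation, the residue is \emph{exactly} the combination $-\tfrac{1}{2}\partial_{x}[J(ik),R]\partial_{x}-\tfrac{s}{2}(\partial_{x}J(ik)[\partial_{x},R]+[\partial_{x},R]^{*}J(ik)\partial_{x})+\tfrac{1}{2}[K_{s},J(ik)L_{0}]$ prescribed by $E_{s}$, with the specific numerical coefficients $\tfrac{1}{2}$ and $\tfrac{s}{2}$, and that all the pieces of ostensibly order $\geq 2$ that do not fit this pattern cancel in pairs rather than being absorbed (they cannot be absorbed, since $J(ik)L$ is genuinely unbounded of order three). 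Every residual term must be genuinely of order $\leq 2s-2$ so that the Cauchy--Schwarz yields $|u|_{s}|u|_{s-1}$ rather than $|u|_{s}^{2}$.
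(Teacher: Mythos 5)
Your choice of multiplier, $M_{s}=A^{*}LA+K_{s}$ with $A=\partial_{x}^{s-1}$, is not the one used in the paper, and it does not work. The paper's $M_{s}$ is $M_{s}u=(-1)^{s}\partial_{x}^{2s}u+(-1)^{s-1}\partial_{x}^{s-1}\big(K_{s}\partial_{x}^{s-1}u\big)$, i.e.\ it sandwiches $K_{s}$ between $\partial_{x}^{s-1}$ factors and keeps the leading part as a pure $\partial_{x}^{2s}$, not $A^*LA$. Both differences matter. First, with $K_{s}$ sandwiched, the computation $\Re\big(J(ik)L_{0}u,\,(-1)^{s-1}\partial_{x}^{s-1}K_{s}\partial_{x}^{s-1}u\big)=\frac{1}{2}\big([K_{s},J(ik)L_{0}]\partial_{x}^{s-1}u,\partial_{x}^{s-1}u\big)$ produces the $K_{s}$-piece of $E_{s}$ tested against $\partial_{x}^{s-1}u$, so it can cancel the genuinely second-order parts coming from the commutators $[J(ik),R]$ and $[\partial_x,R]$, which also land on $\partial_{x}^{s-1}u$. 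With your unsandwiched $K_{s}$ one instead gets $\frac{1}{2}([K_{s},J(ik)L_{0}]u,u)$, tested against $u$ — a term at the wrong level in the hierarchy of derivatives, which is harmlessly small by itself but cannot participate in the cancellation encoded in the hypothesis that $E_{s}$ is of order $1$.

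Second, replacing $(-1)^{s}\partial_{x}^{2s}$ by $A^{*}LA=A^{*}L_{0}A+A^{*}RA$ introduces terms that your Step~2 cannot control. Your identity $\Re(J(ik)Lv,Lv)=0$ (applied with $v=Au$) is correct, but it does not dispose of the whole $A^*LA$ contribution: after commuting $A$ through $L$ you are left with $\Re\big(J(ik)[A,R]u,LAu\big)$. Here $[A,R]$ has principal part $(s-1)[\partial_x,R]\partial_x^{s-2}$ of order $s-2$, $J(ik)$ is of order $1$, and $LAu$ is of order $s+1$; the total count is $2s$ derivatives, which even after the most favorable integration by parts gives $|u|_{s}|u|_{s}$ rather than the required $|u|_{s}|u|_{s-1}$. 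There is no cancellation available for this piece, because nothing in $E_{s}$ addresses it. (The paper avoids it entirely: with $(-1)^{s}\partial_{x}^{2s}$ in place of $A^*LA$, the commutator that arises is $[\partial_{x}^{s},R]$, whose symmetrisation against $\partial_{x}^{s}u$ produces exactly the $-\frac12\partial_x[J(ik),R]\partial_x$ and $-\frac{s}{2}(\cdot)$ pieces of $E_{s}$ on the $\partial_{x}^{s-1}u$ level, with the prefactor $s$ coming directly from $[\partial_x^{s},R]$, not from an ``off-by-one'' that you propose to absorb.) You seem to have been guided by the remark following \eqref{Ms2} about ``$M_{s}=\partial_{x}^{s-1}L\partial_{x}^{s-1}$ in semilinear problems'', but that remark refers to the special choice $K_{s}=R$ of Corollary~\ref{cormult}, in which case the paper's sandwiched form $(-1)^{s-1}\partial_x^{s-1}(-\partial_x^2+K_{s})\partial_x^{s-1}$ happens to be close to $\partial_x^{s-1}L\partial_x^{s-1}$; it is not the multiplier used for a general $K_{s}$. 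Replacing your $M_{s}$ by the paper's and redoing Steps~2--4 accordingly would repair the argument.
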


 This general criterion can be  used in a very simple way when
 $J(ik)$ is a zero order operator, i.e. $J(ik) \in \mathcal{B}(L^2)$.
  Indeed,  we notice that in such a situation the second term in $E_{s}$
   is already a first  order operator. We  will prove  the following
    corollary:
\begin{corollaire}
\label{cormult}
 Assume that $J(ik)\in \mathcal{B}(L^2)$ and that
  $L_{0}= - \partial_{x}^2 + \tilde{L}$ with $\tilde{L}\in \mathcal{B}(H^1, L^2)$.
   Then $K_{s}=R$ verifies the assumption of Lemma
    \ref{lemmult} i.e. $E_{s}$ is a first order operator and
     hence  there exists $M_{s}$ such that \eqref{Ms1}, \eqref{Ms2} hold. 

\end{corollaire}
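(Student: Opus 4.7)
The plan is to verify directly that, with the choice $K_s=R$, the operator $E_s$ maps $H^1$ into $L^2$, after which Lemma~\ref{lemmult} supplies the desired $M_s$. Two preliminary observations make the bookkeeping manageable. Since $Q\in H^{\infty}$ and $F\in C^\infty$, $R=\nabla^2 F(Q)$ is multiplication by a smooth symmetric matrix whose entries and all derivatives are bounded, so $R$, $R'\equiv [\partial_x,R]$, $R''$, etc., are bounded on every $H^s$. And since $J(ik)\in\mathcal B(L^2)$ is a Fourier multiplier in $x$, it commutes with $\partial_x$ and is bounded on every $H^s$. Note also that $R$ is symmetric and bounded on $L^2$, so it is a legitimate choice for $K_s$.

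The main obstacle will be cancelling the apparently second-order part of $E_s$. First I would expand $\tfrac12[R,J(ik)L_0]$ using $L_0=-\partial_x^2+\tilde L$. Since $J(ik)$ commutes with $\partial_x$ and $[R,\partial_x^2]=-R''-2R'\partial_x$, the Leibniz rule yields
$$\tfrac12[R,J(ik)L_0]=-\tfrac12[R,J(ik)]\partial_x^2+\tfrac12 J(ik)R''+J(ik)R'\partial_x+\tfrac12[R,J(ik)\tilde L].$$
The commutator $[R,J(ik)\tilde L]$ is first order because $\tilde L\in\mathcal B(H^1,L^2)$ while $R,J(ik)$ are bounded on $H^1$ and $L^2$. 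The dangerous piece is $-\tfrac12[R,J(ik)]\partial_x^2$, which is a priori of order two. To expose its cancellation with the first term of $E_s$, I would apply the identity $[\partial_x,[R,J(ik)]]=[R',J(ik)]$ (which comes from $[\partial_x,R]=R'$ and $[\partial_x,J(ik)]=0$) iteratively, giving
$$[R,J(ik)]\partial_x^2=\partial_x^2[R,J(ik)]-2\partial_x[R',J(ik)]+[R'',J(ik)]$$
and, symmetrically,
$$-\tfrac12\partial_x[J(ik),R]\partial_x=\tfrac12\partial_x[R,J(ik)]\partial_x=\tfrac12\partial_x^2[R,J(ik)]-\tfrac12\partial_x[R',J(ik)].$$
Adding these two contributions to $-\tfrac12[R,J(ik)]\partial_x^2$, the terms $\pm\tfrac12\partial_x^2[R,J(ik)]$ cancel exactly, leaving only $\partial_x[R',J(ik)]$ and $[R'',J(ik)]$ pieces. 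Both are first order since $[R',J(ik)]$ and $[R'',J(ik)]$ are bounded on $H^1$ and on $L^2$ respectively.

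Finally I would check the middle block of $E_s$. Using $[\partial_x,R]=R'=[\partial_x,R]^*$ (the last equality because $R'$ is symmetric) and commuting $J(ik)$ past $\partial_x$,
$$-\tfrac{s}{2}\bigl(\partial_x J(ik)R'+R' J(ik)\partial_x\bigr)=-\tfrac{s}{2}J(ik)R''-\tfrac{s}{2}\bigl(J(ik)R'+R'J(ik)\bigr)\partial_x,$$
which is visibly first order. Collecting every surviving piece, each has the form $A\partial_x$ with $A\in\mathcal B(L^2)$, $\partial_x B$ with $B\in\mathcal B(H^1)$, or a bounded map $H^1\to L^2$, so $E_s\in\mathcal B(H^1,L^2)$ as required. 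The heart of the proof is the principal-symbol cancellation in the middle paragraph; once that identity is correctly organized, the rest is routine commutator bookkeeping, enabled throughout by the commutativity $[\partial_x,J(ik)]=0$ and the smoothness of $R$.
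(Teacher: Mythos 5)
Your proof is correct and follows essentially the same route as the paper's: both arguments rest on the observation that with $K_s=R$ the second-order piece of $\tfrac12[R,J(ik)L_0]$ equals $-\tfrac12\partial_x[R,J(ik)]\partial_x$ modulo first-order remainders, which cancels exactly against the first term $-\tfrac12\partial_x[J(ik),R]\partial_x$ of $E_s$, while the middle term of $E_s$ and all remainders are first order because $J(ik)\in\mathcal{B}(L^2)$ and $R$ is multiplication by a smooth bounded matrix. Your version simply makes explicit the commutator bookkeeping that the paper leaves as ``with $\tilde E$ a first-order operator.''
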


\subsection{Proof of Corollary \ref{cormult}}
We  check that the assumption of Lemma \ref{lemmult} is verified with  $K_{s}=R$.
 As already noticed the second term in the definition of $E_{s}$
  in Lemma \ref{lemmult} is already
  a first order operator since $J$ is a zero order operator.
   Next, by the assumption $L_{0}= - \partial_{x}^2 + \tilde{L}$, we notice that
  $$ [K_{s}, J(ik) L_{0}]=  - \partial_{x} [K_{s}, J(ik) ] \partial_{x} + \tilde{E}$$
   with $\tilde{E}$ a first order operator. This proves that
    $E_{s}$ is indeed a first order operator with the choice $K_{s}=R$.

\subsection{ Proof  of Lemma \ref{lemmult} }
For $s \geq 2$,  we  define the symmetric operator
$$ 
M_{s} u\equiv (-1)^{ s} \partial_{x}^{2s} u + ( - 1)^{s-1}\partial_{x}^{s-1}\big( K_{s}\partial_{x}^{s-1} u \big).
$$ 
Thanks to the $L^2$ boundedness of $K_s$ the assumption (\ref{Ms1}) is clearly satisfied.
Let us next check (\ref{Ms2}).
For that purpose, we need to evaluate the quantity
$$
\Re \big( (J(ik)(L + S(ik)) u, M_{s} u).
$$
Since $J(ik)L_{0}$ is skew-symmetric, we have
\beq\label{eL2}
\Re 
\big( J(ik) L_{0}u,  ( -1)^{s} \partial_{x}^{2s} u \big)
 = \Re\big( J(ik) L_{0} \partial_{x}^{s}u, \partial_{x}^{s}u \big) = 0.
\eeq
We can also write
\begin{multline*}
\Re\big( J(ik) R\, u, (-1)^{s}\partial_{x}^{2s} u \big) 
= \Re \big(J(ik) R\, \partial_{x}^{s}u, \partial_{x}^{s}u \big
 ) 
\\
+ 
s \Re\big( J(ik)  [\partial_{x}, R ] \partial_{x}^{s-1} u, \partial_{x}^{s} u
\big)   + 
(\mathcal{C}u, \partial_{x}^{s}u)
 \end{multline*}
 where 
$| \mathcal{C}u | \leq C |u|_{s-1}.$
Furthermore, since $J(ik)$ is skew symmetric and $R$ symmetric, 
\begin{eqnarray*}
 \Re\big(J(ik) R\, \partial_{x}^{s}u, \partial_{x}^{s}u \big)
 & = &  \frac{1}{2}\Re
\big((J(ik) R- RJ(ik)) \partial_{x}^{s}u, \partial_{x}^{s}u\big) 
\\
& = &  
-\frac{1}{2}\Re\big( \partial_{x}\, [ J(ik), R ] \, \partial_{x} \, \partial_{x}^{s-1}
u, \partial_{x}^{s-1} u \big).
\end{eqnarray*}
Therefore, we finally find
\begin{eqnarray}
\label{eL3}
& & \Re \big( J(ik) R\, u, (-1)^{s}\partial_{x}^{2s} u \big) 
\\
\nonumber 
& & =  - \frac{1}{2}\, \Re\big( \partial_{x}\, [ J(ik) , R ] \, \partial_{x} \, \partial_{x}^{s-1}
u, \partial_{x}^{s-1}u \big) \\
\nonumber     
& & 
-\frac{s}{2}\Big(  \big( \partial_{x}\, J(ik) \,  [\partial_{x} , R ] +  [\partial_{x}, R]^*
\, J(ik) \, \partial_{x} \big) \partial_{x}^{s-1} u, \partial_{x}^{s-1} u \Big)
       + \mathcal{O}(1) | u |_{s}\, |u |_{s-1}.
\end{eqnarray}
Next, since $J(ik) S(ik)$ is skew-symmetric, 
\beq
\label{eL4}
\Re
\big( J(ik) S(ik) u, (-1)^{s} \partial_{x}^{2s } u \big)
 =  \big( J(ik) S(ik) \partial_{x}^{s} u,  \partial_{x}^{s } u \big) = 0.
\eeq
Since $J(ik)L_{0}$ is skew-symmetric and $K_{s}$ symmetric,  we also have 
 \begin{eqnarray}
 \nonumber 
 \Re \big(  J(ik) L_{0} u, (-1)^{ s-1 } \partial_{x}^{s-1 }  \, K_{s}\,  \partial_{x}^{s-1}\, u \big)
  &= &   \Re \big(  K_{s}\, J(ik) L_{0} \,   \partial_{x}^{s-1} u,  \partial_{x}^{s-1 } u \big)
  \\
  \label{eL5}
   & = & \frac{1}{2}
      \big(  [ K_{s},  J(ik) L_{0} ] \,   \partial_{x}^{s-1} u,  \partial_{x}^{s-1 } u \big).
     \end{eqnarray}
Since $K_s$ is bounded on $L^2$ and $J(ik)$ of order one, we have that
\begin{equation}\label{el6}
\Re \big(  J(ik) R u, (-1)^{ s-1 } \partial_{x}^{s-1 }  \, K_{s}\,
\partial_{x}^{s-1}\, u \big)= 
\mathcal{O}(1) | u |_{s}\, |u |_{s-1}.
\end{equation}
Next, since  by the  assumptions of section  \ref{sS}, we have in particular that
$J(ik) S(ik) \,\partial_{x}\in \mathcal{B}(H^2, L^2 )$ and since $K_{s}$ is bounded on $L^2$, we  have
\begin{multline}\label{el7}
\Re \big(J(ik) S(ik) u, (-1)^{s-1} \partial_{x}^{s-1} K_{s} \partial_{x}^{s-1} u
\big) =  
\\
\Re \big(  J(ik) S(ik) \partial_{x} \, \partial_{x}^{s-2}\, u,   K_{s} \partial_{x}^{s-1} u \big)
 = \mathcal{O}(1) |u |_{s}\, |u |_{s-1}.
\end{multline}
Collecting \eqref{eL2}, \eqref{eL3}, \eqref{eL4},  \eqref{eL5}, \eqref{el6}, \eqref{el7}, we infer
that
$$
\Re \big( (J(ik)(L + S(ik)) u, M_{s} u)=
(E_s\partial_x^{s-1}u, \partial_{x}^{s-1}u).
$$
In view of the assumption on $E_s$, we obtain that the assertion of the proof
of the lemma holds.

\section{Proof of Theorem \ref{theoper} (periodic perturbations)}
The  general strategy of the proof is  inspired from the work of Grenier \cite{Grenier} in fluid mechanics.
\subsection{Construction of a most unstable eigenmode}
By the assumption there exists an unstable mode
with associated transverse frequency $k_0\neq 0$.
The first step of the proof is to find the most unstable eigenmode.
This means that we look for an unstable mode with  associated transverse frequency $mk_0$,
$m\in\Z$ such that the associated amplification parameter $\sigma$ has maximal
real part. This is indeed possible thanks to the following lemma.
\begin{lem}\label{krou}
Consider the problem
\beq\label{eqvp}
\sigma U = J(imk_{0}) ( LU + S(im k_{0}) U),\quad U\in L^2(\R;\C^d)\,.
\eeq
There exists $K>0$ such that for $|mk_{0}| \geq K$  
there is no nontrivial solution of (\ref{eqvp}) with $ \Re(\sigma) \neq 0$.
\\
In addition, for every $k\neq 0$ there is at most one unstable mode with
corresponding  transverse frequency $k$.
\end{lem}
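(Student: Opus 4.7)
The plan for part~(1) is a short energy identity based on the compatibility assumption \eqref{klarge}. Given an eigenmode $\sigma U = J(imk_0)(L + S(imk_0))U$ with $\Re(\sigma) \neq 0$, I would take the $L^2$ scalar product against $W := (L + S(imk_0))U$. Since $L + S(imk_0)$ is self-adjoint, $((L+S(imk_0))U, U)$ is real, while the skew-symmetry of $J(imk_0)$ forces $(J(imk_0) W, W)$ to be purely imaginary. Taking real parts gives
\[
\Re(\sigma)\,\bigl((L+S(imk_0))U, U\bigr) = 0.
\]
With $K$ chosen as in \eqref{klarge}, for $|m k_0| \geq K$ the factor $((L+S(imk_0))U, U) \geq c_0 |U|_1^2$ is strictly positive unless $U \equiv 0$, so $\Re(\sigma) \neq 0$ forces $U \equiv 0$, which proves the first assertion.

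For part~(2), my plan is to show that the unstable subspace has dimension at most one, which yields the uniqueness of both the eigenvector up to scalar and of the amplification parameter $\sigma$. Suppose for contradiction that two unstable modes $(\sigma_i, U_i)$, $i=1,2$, with $\Re \sigma_i > 0$ coexist at the same $k \neq 0$. Testing $\sigma_i U_i = J(ik)(L+S(ik))U_i$ against $(L+S(ik))U_j$, then performing the analogous computation with $(i,j)$ reversed and taking complex conjugates (combining the skew-symmetry of $J(ik)$ with the self-adjointness of $L+S(ik)$), I arrive at
\[
(\sigma_i + \bar\sigma_j)\,\bigl((L+S(ik))U_i, U_j\bigr) = 0, \qquad i, j \in \{1, 2\}.
\]
Since $\Re(\sigma_i + \bar\sigma_j) > 0$, the prefactor never vanishes, so all four pairings vanish. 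Hence $V := \mathrm{span}(U_1, U_2)$ is totally isotropic for the Hermitian form $Q(u,v) := ((L+S(ik))u, v)$.

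The conclusion then rests on a Witt-type dimension count. By \eqref{spectre}, $L$ has a one-dimensional negative spectral subspace, and combined with $S(ik) \geq 0$ the minimax principle gives $n_-(L+S(ik)) \leq 1$. Moreover, $V$ meets $\mathcal{N} := \ker(L+S(ik))$ trivially: if $U = \alpha U_1 + \beta U_2 \in \mathcal{N}$, then applying $J(ik)(L+S(ik))$ gives $\alpha \sigma_1 U_1 + \beta \sigma_2 U_2 = 0$, forcing $\alpha = \beta = 0$ when $U_1, U_2$ are independent. Since $Q$ vanishes on $\mathcal{N}$ and on the cross terms $V \times \mathcal{N}$ by self-adjointness, $V + \mathcal{N}$ is totally isotropic of dimension $2 + n_0$, where $n_0 = \dim \mathcal{N}$. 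This contradicts the Sylvester/Witt bound $n_-(L+S(ik)) + n_0 \leq 1 + n_0$ on the maximal isotropic dimension for $Q$. Hence $\dim V \leq 1$, and if $U_2 = c U_1$ with $c \neq 0$ the eigenvalue equation itself forces $\sigma_1 = \sigma_2$.

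The main obstacle, in my view, lies in making the Sylvester/Witt dimension count rigorous in the infinite-dimensional self-adjoint setting with a possibly non-trivial finite-dimensional kernel of $L+S(ik)$; one has to spectrally decompose $L + S(ik)$ and verify that $V$ embeds faithfully into the quotient by $\mathcal{N}$. The orthogonality identity of part~(2) and the energy identity of part~(1) follow cleanly from the structural assumptions on $J(ik)$, $L$, and $S(ik)$, so the spectral dimension argument at the end is really the heart of the proof.
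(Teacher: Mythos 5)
Part~(1) is the same energy identity as in the paper: test the eigenvalue equation against $(L+S(ik))U$, use skew-symmetry of $J(ik)$ and self-adjointness of $L$, $S(ik)$ to get $\Re(\sigma)\,((L+S(ik))U,U)=0$, then invoke \eqref{klarge} to kill $U$ for $|mk_0|\geq K$. No difference there.

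Part~(2) is correct but takes a genuinely different route. The paper observes that any unstable eigenvector lies in the range of $J(ik)$, writes $U=J(ik)V$, so that the problem becomes the symmetric pencil $\sigma J(ik)V=M_kV$ with $M_k=J(ik)(L+S(ik))J(ik)$, then shows $M_k$ has at most one positive eigenvalue (by pairing the sign structure of $M_k$ against that of $L$ through $J(ik)$) and finally invokes Pego--Weinstein Theorem~3.1 as a black box. You instead work directly with the Hermitian form $Q(u,v)=((L+S(ik))u,v)$: pairing the eigenvalue equations for $(\sigma_i,U_i)$ against each other and using skew-symmetry of $J(ik)$ gives the Krein-type orthogonality identity $(\sigma_i+\bar\sigma_j)\,Q(U_i,U_j)=0$, so the unstable eigenvectors span a $Q$-isotropic space, and a dimension count against the Morse index $n_-(L+S(ik))\leq n_-(L)=1$ finishes. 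Your argument is essentially a self-contained proof of the relevant special case of the Pego--Weinstein count, whereas the paper cites it; what you lose in length you gain in transparency, since the orthogonality identity makes explicit \emph{why} multiplicity one holds.

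One small caveat on the final step. The version you wrote, based on the isotropic space $V+\mathcal{N}$ of dimension $2+n_0$ with $n_0=\dim\ker(L+S(ik))$, implicitly assumes $\mathcal{N}$ is finite-dimensional, which is not granted by the hypotheses (only $\ker L$ is assumed finite-dimensional, and adding $S(ik)\geq 0$ could in principle enlarge the kernel or place $0$ at the bottom of essential spectrum). You do not actually need $\mathcal{N}$ at all: once you know $V=\mathrm{span}(U_1,U_2)$ is two-dimensional, $Q$-isotropic, contained in the operator domain, and $V\cap\ker(L+S(ik))=\{0\}$, project $V$ onto the negative spectral subspace $H_-$ of $L+S(ik)$ (which has $\dim H_-\leq 1$). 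Some nonzero $w\in V$ satisfies $P_-w=0$; since spectral projections preserve the domain, $Q(w,w)=((L+S(ik))P_+w,P_+w)\geq 0$ with equality iff $P_+w=0$, i.e.\ iff $w\in\ker(L+S(ik))$. Isotropy forces $Q(w,w)=0$, hence $w\in\ker(L+S(ik))\cap V=\{0\}$, contradiction. This removes the obstacle you flagged and requires no finiteness of the kernel; it is the standard way to run the Sylvester/Witt count for a semibounded self-adjoint operator.
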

\begin{proof}
Recall that by  assumption  if $U$ solves (\ref{eqvp}) then $U$ belongs to
$H^{\infty}(\R;\C^N)\cap\mathcal{D}_{S}$. By taking the real part of the scalar product of (\ref{eqvp}) 
with $LU + S(im k_{0}) U$,  we get  the following "conservation law"
\beq\label{rienkgrand}
0=\Re(\sigma ( (U, LU) + ( U,  S(imk_{0})U))) = \Re(\sigma) ( (U, LU) + ( U,  S(imk_{0})U) ).
\eeq
Indeed, since $J(imk_{0})$ is skew-symmetric, we have
$\mbox{Re } (J(imk_{0})u, u)=0$  for every $u \in H^\infty$ and we have also used
 that $L$ and $S(ik)$ are symmetric.
Thanks to \eqref{klarge}, we get that for $|mk_{0}| \geq K$  
there is no nontrivial solution of (\ref{rienkgrand}) with $ \Re(\sigma)\neq 0$.

   Let us now prove the second assertion of the lemma, i.e.
we shall prove that  for $k \neq 0$ there is at most one unstable eigenmode with
corresponding  transverse frequency $k$.
Thanks to \eqref{eqvp}, we first notice that an unstable eigenmode must be in
the image of $J(imk_{0})$, consequently, since $J(i k )$ is into, we can write $U= J 
(ik)V$ with
$V\in H^{\infty}(\R;\C^N)$ a nontrivial solution of
\beq\label{eqV}
\sigma J(ik) V = \big( J(ik)LJ(ik)  + J(ik) S(ik) J(ik) \big)V \equiv M_{k} V, \quad k= m k_{0}.
\eeq
Note that $M_{k}$ is a symmetric operator.
Next, we observe that the operator $J(ik)LJ(ik)$ has at most one  positive  eigenvalue.
Indeed, by contradiction, if    $J(ik)LJ(ik)$ had an invariant subspace $E$  of dimension at least
$2$ on which the quadratic form $J(ik)LJ(ik)$ is positive definite, then the quadratic
form $(Lu,u)$ would be negative 
definite on $J(ik)E$ and since $J(ik)$ is into $J(ik)E$ is also two-dimensional. This  gives
a contradiction  since $L$ has only one simple negative eigenvalue.
\\
Next, we can also prove  that $M_{k}$ has at most one  simple positive
eigenvalue. Again, if $M_{k}$ has an invariant subspace $E$ of dimension at
least $2$ on which the
quadratic form $(M_{k}u, u )$ is positive definite then there exists   
$u \in E \cap (\psi)^\perp \neq \{0\} $ where $\psi$ is the only positive eigenvalue of  
$J(ik)LJ(ik)$. Since on $ (\psi)^\perp$, $J(ik)LJ(ik)$ is non positive, and $S(ik)$ is positive, we get
$$ 
(M_{k} u, u ) = (J(ik)LJ(ik) u, u ) + (J(ik)S(ik)J(ik)u, u ) \leq 0
$$
which yields a contradiction. Consequently $M_{k}$ has at most one  positive eigenvalue.
Finally, we can use \cite[Theorem~3.1]{PW} to get that $J(ik)^{-1} M_{k}$ has at most one unstable eigenvalue.
Consequently, for $k \neq 0$, there is at most one unstable  $\sigma$ for which $\eqref{eqvp}$ has a nontrivial solution.
This completes the proof of Lemma~\ref{krou}.
\end{proof}         
By  the assumption \eqref{evans1D}, we know that for $k=0$, there is no unstable eigenmode.
We consider the finite set $A$ of integers $m$ such that $k_{0} \leq |\, mk_{0}| \leq K$,
where $K$ is provided by Lemma~\ref{krou}. Again by Lemma~\ref{krou}, for
every $m\in A$ there is at most one unstable mode with corresponding
transverse frequency $mk_0$. Moreover, by the assumption of
Theorem~\ref{theoper}, for $m=1$ there is an unstable mode. 
We now take the unstable mode $U$ corresponding to $m_0\in A$ with maximal real part of the
corresponding amplification parameter which we note by $\sigma_0$. We set
$$
u^{0}(t,x,y)\equiv e^{\sigma_0 t } e^{ i m _0k_{0} y }\,
U+e^{\overline{\sigma_0} t } e^{- i m _0k_{0} y }\, \overline{U}
=2\Re\big(e^{\sigma_0 t } e^{ i m _0k_{0} y }\,U\big)\,.
$$
To prove Theorem \ref{theoper}, we shall use $Q+\delta u^0(0)$ as an initial data
for \eqref{ham2}.
Thanks to our  assumptions of section~\ref{asnon} about the nonlinear problem,
the problem \eqref{ham2} is locally well-posed
with data $Q+\delta u^0(0)$.
\subsection{Construction of an high order unstable approximate solution}
Denote by $F_j\in C^{\infty}(\R^d;\R)$, $1\leq j\leq d$ the derivative of $F$ with respect
to the $j$'th variable, i.e. $\nabla F=(F_1,\cdots,F_{d})$. For
$\alpha\in\N^d$, we set 
\begin{equation}\label{falpha}
F_{\alpha}\equiv \big(\partial^{\alpha}F_{1}(Q),\cdots,\partial^{\alpha}F_{d}(Q)\big).
\end{equation}
Let us look for a solution of \eqref{ham2} under the form $u=Q+\delta v$,
where $\delta\in ]0,1]$. Recall the Taylor formula
$$
f(x+y)-f(y)=\sum_{1\leq |\alpha|\leq N}\frac{ x^\alpha  }{\alpha!}\partial^{\alpha}f(y)+(N+1)
\sum_{|\alpha|=N+1}\frac{x^{\alpha}}{\alpha!}\int_{0}^{1}(1-t)^{N}\partial^{\alpha}f(tx+(1-t)y)dt,
$$
where $N\geq 1$ and $f\in C^{\infty}(\R^d;\R)$.
In what follows, we shall also use  that for $s \geq 2$,   $\mathbb{H}^s$  is an algebra , and that 
$
\|f(u)\|_{s}\leq \Lambda (\|u\|_{s}),
$
where $\Lambda:\R\rightarrow\R^{+}$ is a continuous function. 
We obtain thus that for every $M\geq 1$, $v$ solves the equation 
\beq\label{nonlineq}
\delta \partial_{t} v  = \mathcal{J}(\partial_y)\Big( 
\delta (L+ \mathcal{S}(\partial_{y}))v+
\sum_{2\leq |\alpha|\leq M+1}\delta^{|\alpha|}v^{\alpha}F_{\alpha}+\delta^{M+2}R_{M,\delta}(v)
\Big),
\eeq
where $F_{\alpha}$ is defined by (\ref{falpha}) and $R_{M,\delta}$ satisfies
for $s\geq 2$
$$
\forall\, \delta\in ]0,1],\quad \forall v\in \mathbb{H}^s\,,\,
\|R_{M,\delta}(v)\|_{s}\leq \|v\|_{s}^{M+2}\Lambda_{M}(\|\delta v\|_{s})\,,
$$
where $\Lambda_{M}:\R\rightarrow\R^{+}$ is a continuous function. 
We define $V_{K}^s$ as the space 
$$ 
V_{K}^s =\Big\{ u\,:\, \, u= \sum_{j=- K}^{K} u_{j}\, e^{  i  jm_{0}k_0  y  },\quad \, u_{j} \in H^s(\R) \Big\} 
$$
and we define a norm on $V_{K}^s$ by
$
|u|_{V_{K}^s} = \sup_{j} |u_{j}|_{s}.
$ 
Let us notice that  $u^{0} $ is such that $u^{0} \in V^{s}_{1}$ for all $s\in\mathbb{N}$.
Following the strategy of \cite{Grenier}, for $s\gg 1$, we look for an high order solution under the form 
\begin{equation}\label{u_app}
u^{ap}= \delta u^0  + \sum_{ k=2}^{M+1} \delta^k u^k , \quad u^k \in  V^{s}_{ k +1}
\end{equation}
such that $u^k_{/t=0}=0$ and $M\geq 1$ is to be fixed later.

By plugging the expansion in \eqref{nonlineq}  and by 
cancelling the terms involving $\delta^{k+1}$, $1\leq k\leq M$,
we choose $u^k$ so that $u^k$ solves the problem
\beq\label{uk}
\partial_{t} u^k = \mathcal{J}(\partial_y) ( L u^k + \mathcal{S}(\partial_{y}) u^k)
 + \mathcal{J}(\partial_y) \sum_{2\leq |\alpha|\leq k+1} \Big( \sum_{|\beta|= k+1-|\alpha|}
u^{\beta_1}_{1}\cdots u^{\beta_d}_{d}\Big)
F_{\alpha}
\eeq
where $u^{\beta_j}_{j}$ stands for the $j$'th coordinate of $u^{\beta_j}$ and
with the initial condition $u^k_{/t=0} = 0$.
Note that the term involving $\delta$ cancels thanks to the choice of $u^0$
(while the term in front of $\delta^0$ is absent in \eqref{nonlineq} thanks to
the choice of $Q$).
Thanks to our assumptions $u^k$ is a solutions of a linear equation which is
globally defined. Indeed, we can define $\exp(JL_0)$ via the Fourier transform
and then treat the problem for $u^k$ perturbatively.
Moreover $u^k\in V^s_{k+1}$ for every $s\in\R$.
The main point in the analysis of $u^{ap}$ is the following estimate.
\begin{proposition}\label{propuk}
Let us fix an integer $M\geq 1$.
Let $u^k$ be the solution of \eqref{uk},  $0\leq k\leq M$.
Then for every integer $s\geq 1$ there exists a constant $C_{M,s}$ such that we have the bound
\beq\label{estuk} 
|u^k(t)|_{V^{s }_{k+1}} \leq C_{M,s}e^{ (k+1 ) \Re(\sigma_{0})t}, \quad \forall\, t \geq 0.
\eeq 
As a consequence there exists $G\in {\mathbb H}^s$ for all $s$ such that 
$$
\partial_{t}(Q+u^{ap})-\mathcal{J}(\partial_y)\big(L_0(Q+u^{ap})+\nabla
F(Q+u^{ap})+\mathcal{S}(\partial_{y})(Q+u^{ap})\big)=
\mathcal{J}(\partial_y)G
$$
and  for $0\leq t\leq \log(1/\delta)/\Re(\sigma_0)$ and $s\geq 0$ one has the bound
$$
\|{\mathcal J}(\partial_y)G(t)\|_{s}\leq  C_{M,s}\delta^{M+2}\,e^{ (M+2 ) \Re(\sigma_{0})t},
$$
where $C_{M,s}$ is independent of $t \in [ 0, \log(1/\delta)/\mbox{Re }\sigma_{0}]$ and $\delta\in ]0,1]$. 
\end{proposition}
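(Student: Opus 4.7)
The plan is induction on $k$, followed by a Taylor remainder argument for $G$. The base case $k=0$ is immediate from the explicit formula: since $U \in H^\infty$ and $u^0 = 2\Re(e^{\sigma_0 t} e^{im_0 k_0 y} U)$, one has $|u^0(t)|_{V^s_1} \leq 2|U|_s e^{\Re(\sigma_0) t}$, which matches \eqref{estuk} with $k=0$.

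For the inductive step, assume \eqref{estuk} holds for all $j < k$. The source in \eqref{uk} is a finite sum of polynomial expressions in the previously constructed $u^j$'s coming from the Taylor expansion of $\nabla F$ around $Q$. Its combinatorial structure is such that in each monomial, the total $\delta$-degree arising from the factors (i.e. $\sum_j \beta_j$) added to the differentiation order $|\alpha|$ equals $k+1$, and the sum of the $y$-mode bounds $(\beta_j+1)$ is at most $k+1$. Combined with the algebra property of $H^s$ for $s$ large, the inductive hypothesis and the smoothness of $Q$ and $F$ give
\[
|\mathcal{J}(\partial_{y})\,\text{source}(t)|_{V^{s-q}_{k+1}} \leq C_{M,s}\, e^{(k+1)\Re(\sigma_{0}) t},
\]
for some fixed loss $q$ coming from the order of $\mathcal{J}$.

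The heart of the proof is estimating $u^k$ from the inhomogeneous linear equation. Decomposing in $y$-Fourier modes, $u^k(t,x,y) = \sum_{|m| \leq k+1} \hat u^k_m(t,x) e^{i m m_0 k_0 y}$, each coefficient satisfies $\partial_t \hat u^k_m = A_m \hat u^k_m + \hat h^k_m$ with zero initial data, where $A_m := J(imm_0 k_0)(L + S(imm_0 k_0))$ and $|\hat h^k_m(t)|_s \leq C e^{(k+1)\Re(\sigma_0) t}$. By Duhamel it suffices to bound the semigroup $e^{tA_m}$ on $H^s$ by $C_m (1+t)^N e^{\Re(\sigma_0) t}$. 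For $|m m_0 k_0| \geq K$, the conservation law \eqref{rienkgrand}, which uses the skew-symmetry of $J(imm_0k_0)(L+S(imm_0k_0))$ together with the coercivity \eqref{klarge}, gives a uniformly bounded semigroup in $H^1$; the extension to higher Sobolev norms follows from the multipliers $M_s$ of section~\ref{Ms}. For $|m m_0 k_0| < K$, a finite set of modes, Lemma~\ref{krou} and the choice of $\sigma_0$ ensure that the spectrum of $A_m$ in $\{\Re \sigma > 0\}$ lies in a compact set with $\Re \sigma \leq \Re(\sigma_0)$. Combining the compact-set resolvent bounds of section~\ref{hypper} with large-$|\Im \sigma|$ resolvent estimates derived from the multiplier identity of Lemma~\ref{lemmult}, Laplace inversion on a contour $\Re z = \Re(\sigma_0) + \epsilon$ yields the desired semigroup bound. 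Duhamel then gives
\[
|\hat u^k_m(t)|_s \leq C \int_0^t (1+t-s)^N e^{\Re(\sigma_0)(t-s)} e^{(k+1)\Re(\sigma_0)s}\, ds \leq C_{k,s}\, e^{(k+1)\Re(\sigma_0) t},
\]
since $k \geq 1$ makes $(k+1)\Re(\sigma_0) > \Re(\sigma_0)$ and the integrand is dominated near $s = t$. Summing over the finitely many Fourier modes closes the induction.

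For the residual $G$, substituting $u^{ap} = Q + \delta\tilde{v}$ with $\tilde{v} = \sum \delta^k u^k$ into the Taylor formula and matching powers of $\delta$ against \eqref{uk} shows that every term through $\delta^{M+1}$ is annihilated by construction; what remains is of size $\delta^{M+2}$ and involves the remainder $R_{M,\delta}(\tilde v)$. On the time interval $t \leq \log(1/\delta)/\Re(\sigma_0)$ the inductive bound gives $\|\delta\tilde v(t)\|_s$ uniformly bounded and $\|\tilde v(t)\|_s \leq C_{M,s} e^{(M+1)\Re(\sigma_0) t}$, so the assumed bound on $R_{M,\delta}$ yields $\|\mathcal{J}(\partial_y) G(t)\|_s \leq C_{M,s}\delta^{M+2} e^{(M+2)\Re(\sigma_0) t}$. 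The main obstacle is the semigroup bound at the intermediate frequencies $|m m_0 k_0| < K$: it requires glueing the resolvent control of section~\ref{hypper} on compact subsets of $\{\Re \sigma > 0\}$ with the large-$|\Im \sigma|$ bounds coming from the multiplier identity, while crucially exploiting the definition of $\sigma_0$ as the most unstable eigenmode to ensure that no spectrum of $A_m$ exceeds $\Re(\sigma_0)$, so that the Laplace contour can be placed just to the right of $\Re(\sigma_0)$ with only polynomial growth of the resolvent in $|\Im z|$.
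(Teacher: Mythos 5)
Your overall architecture matches the paper's: induction on $k$, Fourier decomposition in $y$ reduces to a $1d$ linear inhomogeneous problem, and you recognize that the key input is the resolvent estimate for $\sigma - J(ik)(L+S(ik))$ on a vertical line $\Re\sigma=\gamma_0 \in (\Re\sigma_0, 2\Re\sigma_0)$, obtained by gluing the compact-set bounds of Section~\ref{hypper} with the large $|\Im\sigma|$ bounds produced by the multiplier $M_s$. You also correctly identify this gluing as the crux, and your treatment of the high-frequency $y$-modes and of the residual $G$ is sound.

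Where your argument breaks down is the passage from resolvent bounds to a time-domain estimate. You propose to establish a semigroup bound $\|e^{tA_m}\|_{\mathcal{B}(H^s)} \leq C_m(1+t)^N e^{\Re(\sigma_0)t}$ by ``Laplace inversion on a contour $\Re z = \Re(\sigma_0)+\eps$'' and then apply Duhamel. But the resolvent estimates available here --- Proposition~\ref{resolvant}, built from Lemma~\ref{HF} and Lemma~\ref{BF} --- carry an intrinsic loss of $q$ derivatives: $|(\gamma_0+i\tau - A_m)^{-1}H|_s \lesssim |H|_{s+q}$ uniformly in $\tau$, with no decay as $|\tau|\to\infty$. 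The Bromwich contour integral $\frac{1}{2\pi i}\int_{\gamma_0 - i\infty}^{\gamma_0+i\infty} e^{zt}(z-A_m)^{-1}\,dz$ therefore does not converge absolutely on $H^s$, and the Hille--Yosida--type argument you invoke does not produce a bound $\|e^{tA_m}\|_{\mathcal{B}(H^s)} \lesssim (1+t)^N e^{\Re(\sigma_0)t}$; at best one could get a map with infinite loss of regularity. This is precisely why the paper never forms the semigroup. Instead, it applies the resolvent bound through Plancherel in the Laplace variable $\tau$, which only needs an $L^2_\tau$ estimate and tolerates the nondecay in $\tau$: one obtains
\begin{equation*}
\int_0^T e^{-2\gamma_0 t}|v(t)|_s^2\,dt \;\leq\; C\int_0^T e^{-2\gamma_0 t}|f(t)|_{s+q}^2\,dt,
\end{equation*}
an $L^2$-in-time bound, and then upgrades it to a pointwise-in-time bound via the crude $H^s$ energy estimate $\frac{d}{dt}|v|_s^2 \leq C|v|_{s+1}^2 + Ce^{2\gamma t}$ (note the one-derivative loss, which is then absorbed by applying the $L^2_t$ bound at level $s+1$). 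Your Duhamel route does not provide a substitute for this Plancherel-plus-energy mechanism, so the inductive step as written is not closed. Replacing the purported semigroup bound with the paper's Laplace--Plancherel argument (truncate the source, Laplace transform, Plancherel, energy estimate) would fix the gap while leaving the rest of your structure intact.
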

By an easy induction argument Proposition~\ref{propuk} is a consequence of the following statement.
\begin{proposition}\label{theolinsource}
There exists $q\in\N$ such that for $s\geq 3$,  if
$f(t) \in V_{K}^{s+q}$ satisfies
\beq\label{F}
|f(t)|_{V_{K}^{s+q}} \leq C_{K,s} e^{ \gamma t}, \quad \gamma \geq 2\Re(\sigma_0)
\eeq 
then the solution $u$ of the linear problem
\beq\label{linsource}
\partial_{t} u   = \mathcal{J}(\partial_y)(Lu + \mathcal{S}(\partial_{y})u ) +
\mathcal{J}(\partial_y) f, 
\quad u_{/t=0}= 0.
\eeq
belongs to $ V_{K}^{s}$ and satisfies the estimate
\beq\label{ulin}
|u(t)|_{V_{K}^{s}} \leq \tilde{C}_{K, s} e^{ \gamma t}, \quad \forall t \geq 0.
\eeq 
\end{proposition}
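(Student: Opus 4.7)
Since $V_K^s$ is a finite direct sum of transverse Fourier modes, I would first decompose in $y$ and reduce to a family of $1$D problems, then convert each to a resolvent estimate by Laplace transform along $\Re\sigma=\gamma$, then invert. The hypothesis $\gamma\geq 2\Re(\sigma_0)$ is used to place the integration contour strictly to the right of every unstable eigenmode.

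\textbf{Step 1: Fourier in $y$.} Writing $f=\sum_{|j|\leq K}f_j(t,x)e^{ij m_0 k_0 y}$ and similarly for $u$, the linear system \eqref{linsource} decouples into finitely many equations for $u_j(t,x)$ with transverse symbol $k_j:=jm_0k_0$,
\begin{equation*}
\partial_t u_j = J(ik_j)\bigl(L + S(ik_j)\bigr) u_j + J(ik_j) f_j,\qquad u_j(0)=0.
\end{equation*}
Because the sum over $|j|\leq K$ is finite, it suffices to prove that $|u_j(t)|_s \leq C e^{\gamma t}$ for each $j$, with constants depending on $K$.

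\textbf{Step 2: Laplace transform and zero-freeness of the Evans function.} The Laplace transform $\tilde u_j(\sigma)=\int_0^\infty e^{-\sigma t}u_j(t)\,dt$, initially defined for $\Re\sigma$ large, satisfies the resolvent equation
\begin{equation*}
\sigma\, \tilde u_j - J(ik_j)\bigl(L + S(ik_j)\bigr)\tilde u_j = J(ik_j)\tilde f_j.
\end{equation*}
By Lemma~\ref{krou} the values $|k_j|\geq K$ admit no unstable mode at all, and for the finitely many bounded values of $|k_j|$ the assumption that $\sigma_0$ is the \emph{most} unstable mode (together with \eqref{evans1D} at $j=0$) forces $D(\,\cdot\,,k_j)$ to be zero-free on $\{\Re\sigma\geq \Re\sigma_0\}\supset\{\Re\sigma\geq\gamma\}$, since $\gamma\geq 2\Re\sigma_0>\Re\sigma_0$.

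\textbf{Step 3: Uniform resolvent bounds on $\Re\sigma=\gamma$.} On compact subsets of this vertical line, Lemma~\ref{lemper} provides $|\tilde u_j(\sigma)|_s \leq C\,|\tilde f_j(\sigma)|_{s+q}$ directly. For $|\Im\sigma|\to\infty$ I would close the estimate by pairing the resolvent equation with the multiplier $M_s$ of Section~\ref{Ms} and taking real parts; using that $J(ik_j)L_0$ and $J(ik_j)S(ik_j)$ are skew-symmetric, that $M_s$ is self-adjoint, and the bounds \eqref{Ms1}, \eqref{Ms2}, \eqref{JR}, one obtains
\begin{equation*}
\gamma\,|\tilde u_j|_s^2 \leq C\,|\tilde u_j|_s\,|\tilde u_j|_{s-1} + C\,|\tilde u_j|_{s-1}^2 + C\,|\tilde f_j|_{s+1}\,|\tilde u_j|_s,
\end{equation*}
with constants independent of $\Im\sigma$. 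A Young inequality and downward induction on $s$, with the base case $s=0$ (or $s=1$) handled by pairing with $\tilde u_j$ itself --- using the commutator bound \eqref{JR} and, for $|k_j|\geq K$, the coercivity \eqref{klarge} --- yields the uniform bound $|\tilde u_j(\sigma)|_s \leq C\,|\tilde f_j(\sigma)|_{s+q}$ on all of $\{\Re\sigma=\gamma\}$.

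\textbf{Step 4: Inverse Laplace transform.} Writing
$u_j(t) = \frac{1}{2\pi i}\int_{\gamma-i\infty}^{\gamma+i\infty} e^{\sigma t}\tilde u_j(\sigma)\,d\sigma$
and substituting the previous bound (coupled, if needed, with one integration by parts in $\sigma$ to compensate for the lack of pointwise decay in $|\Im\sigma|$, taking advantage of the fact that $f_j$ itself grows no faster than $e^{\gamma t}$) gives $|u_j(t)|_s \leq \tilde C_{K,s}\,e^{\gamma t}$, which is \eqref{ulin}.

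\textbf{Main obstacle.} The technical crux is Step~3: producing resolvent estimates uniform in $\Im\sigma$ along the line $\Re\sigma=\gamma$, for \emph{every} admissible transverse frequency $k_j$ at once. For $|k_j|\geq K$ the coercivity \eqref{klarge} trivializes the matter, but for the bounded values of $|k_j|$ the operator $L$ still has its negative eigenvalue $\mu$, so positivity of the energy is only regained because we work strictly to the right of the spectral bound $\Re\sigma_0$. The assumption \ref{Ms} on the existence of the multiplier $M_s$, supplying simultaneously the positive energy \eqref{Ms1} and the commutator bound \eqref{Ms2}, is exactly tailored for this closure of the estimate, and combining it with Lemma~\ref{lemper} on compacts is what allows the two regimes to be glued into a single uniform bound.
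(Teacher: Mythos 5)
Your Step~1--2 match the paper: decompose in transverse Fourier modes, reduce to 1D problems, Laplace transform, and note that the choice $\gamma\geq 2\Re\sigma_0>\Re\sigma_0$ keeps the contour to the right of every unstable mode. Your Step~3 also correctly identifies the two regimes (structural energy multiplier estimate for $|\Im\sigma|$ large, Evans function/resolvent hypothesis for $|\Im\sigma|$ bounded), which is exactly the paper's split between Lemma~\ref{HF} and Lemma~\ref{BF}.

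There are, however, two gaps. The major one is Step~4. You propose a pointwise inverse Laplace transform on the line $\Re\sigma=\gamma$, ``with one integration by parts if needed.'' But $\mathcal L f_j(\gamma+i\tau)=\int_0^\infty e^{-\gamma t-i\tau t}f_j(t)\,dt$ is not absolutely convergent when $|f_j(t)|\sim e^{\gamma t}$ --- the integrand is $O(1)$ --- so the contour $\Re\sigma=\gamma$ is borderline and cannot carry the argument as stated; nor does one integration by parts suffice to produce an absolutely convergent inverse transform even if it did converge (one gets only $(1+|\tau|)^{-1}$, which is not $L^1$). The paper's actual route avoids this entirely: it truncates $f_j$ in time on $[0,T]$, takes the Laplace transform on a line $\Re\sigma=\gamma_0$ with $\Re\sigma_0<\gamma_0<\gamma$ (so both transforms converge absolutely), applies Bessel--Parseval to convert the resolvent bound into an $L^2$-in-time estimate $\int_0^T e^{-2\gamma_0 t}|v(t)|_s^2\,dt\leq Ce^{2(\gamma-\gamma_0)T}$, and then upgrades this to a pointwise-in-time bound via a crude $H^s$ energy inequality $\frac{d}{dt}|v(t)|_s^2\leq C|v(t)|_{s+1}^2+Ce^{2\gamma t}$, integrating and absorbing the higher-order term by the already-proven $L^2$-in-time bound at level $s+1$. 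This $L^2$-then-energy mechanism is the content you are missing; it is also the reason the proposition loses a few derivatives ($q$).

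The minor gap is the base case of Step~3. Pairing the resolvent equation with $w$ itself gives only $\Re\sigma\,|w|^2\leq C|w|^2+|H|_1|w|$, with $C$ coming from the commutator \eqref{JR}, and since $\gamma_0$ may be small compared to $C$ this does not close for bounded transverse frequencies. The paper's Lemma~\ref{HF} instead pairs with $(L+S(ik))w$ to obtain the ``conservation law'' $\gamma_0\big((w,Lw)+(w,S(ik)w)\big)=\Re(J(ik)H,(L+S(ik))w)$, decomposes $w$ along the negative eigenvector $\varphi_{-1}$, the kernel of $L$, and the coercive complement, and then controls the finite-dimensional pieces $\alpha,w_0$ by a separate pairing producing the factor $(\gamma_0+|\tau|-C)$, which is the source of the high-$|\tau|$ gain. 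Your ``Main obstacle'' paragraph shows you sense this difficulty, but the quoted base-case mechanism does not actually overcome it.
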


Since $f$ has a finite number of  Fourier modes in $y$,
Proposition~\ref{theolinsource} is a direct  consequence of the following $1d$ result.
\begin{proposition}\label{1d}
There exists $q\in\N$ such that for $j$ such that $j/(m_{0}k_0)\in\Z$ with $|j|/(m_{0}
k_{0}) \leq K$ (where $K$ is provided by
Lemma~\ref{krou})
and  $s\geq 3$,  if we suppose also that
\begin{equation}\label{Fbis}
|f_j(t)|_{s+q}\leq C_{j,s}e^{\gamma t},\quad \gamma \geq 2\Re(\sigma_0)
\end{equation}
then the solution of
\beq\label{ulink}
\partial_{t}v  = J(ij)\big( L + S(ij) \big) v  +   J(ij) f_{j},  \, \quad v_{/t=0}= 0 
\eeq
satisfies 
$$
|v(t)|_{s}\leq C_{j,s}e^{\gamma t}\,.
$$
\end{proposition}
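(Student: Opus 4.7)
The plan is to solve \eqref{ulink} by Laplace transform in $t$. Setting formally $U(\sigma) = \int_0^\infty e^{-\sigma t}v(t)\,dt$ and $F(\sigma) = \int_0^\infty e^{-\sigma t}f_j(t)\,dt$, which converge for $\Re\sigma > \gamma$, one obtains the resolvent equation
\[\sigma U = J(ij)(L+S(ij))U + J(ij)F.\]
Two facts ensure that the contour $\{\Re\sigma = \gamma\}$ lies strictly in the resolvent set. First, $\sigma_0$ was selected in Lemma~\ref{krou} and the discussion following it as the unstable eigenvalue with largest real part over all admissible transverse frequencies $mk_0$ with $|mk_0|\leq K$, so the Evans function $D(\cdot,j)$ does not vanish in $\{\Re\sigma > \Re\sigma_0\}$. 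Second, $\gamma \geq 2\Re\sigma_0 > \Re\sigma_0$, placing no unstable eigenvalues to the right of the contour. For any compact $\mathcal{K}\subset \{\Re\sigma \geq \gamma\}$, the hypothesis of Section~\ref{hypper} then provides $|U(\sigma)|_s \leq C_{j,s,\mathcal{K}}|F(\sigma)|_{s+q}$.

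The central step is to extend this estimate uniformly to $|\Im\sigma| \to \infty$ on the line $\Re\sigma = \gamma$. I would first obtain the $H^1$ version by pairing the resolvent equation with $LU + S(ij)U$, using the skew-symmetry of $J(ij)L$ and $J(ij)S(ij)$ from Sections~\ref{sJ}--\ref{sS} to kill the leading term; the compatibility assumption \eqref{klarge} together with the coercivity \eqref{coerc} then gives coercivity of the remaining quadratic form, producing an $H^1$ bound independent of $|\Im\sigma|$. Higher $H^s$ bounds then follow by pairing the resolvent equation with the multiplier $M_s$ from Section~\ref{Ms}: property \eqref{Ms2} controls $\Re(J(ij)(L+S(ij))U, M_s U)$ by $|U|_s|U|_{s-1}$, while \eqref{Ms1} converts the resulting information about $(M_s U,U)$ into an $H^s$-bound, closed by induction on $s$ using the lower-order estimates. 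This produces $|U(\sigma)|_s \leq C_{j,s}|F(\sigma)|_{s+q}$ uniformly on $\Re\sigma = \gamma$.

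With the uniform resolvent bound in hand, the inversion formula
\[v(t) = \frac{1}{2\pi i}\int_{\gamma - i\infty}^{\gamma + i\infty}e^{\sigma t}U(\sigma)\,d\sigma\]
recovers $v$; it is justified by first working on a line $\Re\sigma = \gamma' > \gamma$ where $F$ converges absolutely, and then shifting the contour down to $\Re\sigma = \gamma$ using analyticity from the previous paragraph. Integrability of the integrand in $|\Im\sigma|$ is produced by integrating by parts in the Laplace integral using $t$-smoothness of $f_j$ --- which follows from its explicit structure as a polynomial expression in the smoother earlier iterates $u^0,\ldots,u^{k-1}$ --- trading each factor $(1+|\tau|)^{-1}$ for an additional time derivative of $f_j$ bounded by the same $e^{\gamma t}$ growth. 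Taking absolute values in the inversion formula then yields $|v(t)|_s \leq C_{j,s}e^{\gamma t}$. The main obstacle is the uniform high-frequency resolvent bound in the second paragraph; without the sign assumption \eqref{klarge} and the multiplier $M_s$, the dispersive character of the equation alone would not close this estimate.
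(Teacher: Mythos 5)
Your treatment of the resolvent estimate $|U(\sigma)|_s\le C|F(\sigma)|_{s+q}$ (the $H^1$ coercivity via skew-symmetry and \eqref{klarge} for large $|\Im\sigma|$, the multiplier $M_s$ for higher $s$, the Evans-function hypothesis for a compact set of $\tau$'s) matches the paper's Lemmas~\ref{HF} and~\ref{BF}. The gap is in the passage back to the time domain, where your contour-integration strategy does not close.

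First, you place the Laplace line at $\Re\sigma=\gamma$, but with $|f_j(t)|_{s+q}\lesssim e^{\gamma t}$ the transform $F(\gamma+i\tau)=\int_0^\infty e^{-(\gamma+i\tau)t}f_j(t)\,dt$ is not absolutely convergent on that line, and it is in general not convergent for \emph{any} $\gamma_0<\gamma$ either. Your fix --- shift down from $\gamma'>\gamma$ --- then runs into the second, quantitative problem: the integrability in $\tau$ that you propose to obtain by integrating by parts in $t$ produces constants that blow up like $(\gamma'-\gamma)^{-1}$, because each iterated time derivative $\partial_t^k f_j$ still grows like $e^{\gamma t}$ and the resulting integral $\int_0^\infty e^{-(\gamma'-\gamma)t}\,dt$ is not bounded uniformly as $\gamma'\downarrow\gamma$. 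Optimizing $\gamma'$ in terms of $t$ leaves you with a polynomial-in-$t$ loss on top of $e^{\gamma t}$, whereas the statement requires the clean rate $e^{\gamma t}$; this rate is not negotiable, since it is fed back into the induction behind Proposition~\ref{propuk}, where the source for $u^k$ grows exactly like $e^{(k+1)\Re\sigma_0 t}$.

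The paper avoids both issues by a different device. It cuts off the source to $G=f_j\,\mathbf 1_{[0,T]}$ and shows by a Gronwall argument (using \eqref{JR} and skew-symmetry) that the solution $\tilde v$ driven by $G$ agrees with $v$ on $[0,T]$; the Laplace transform of $G$ is then entire, so one may work on the line $\Re\sigma=\gamma_0$ for any $\gamma_0\in(\Re\sigma_0,\gamma)$. It then applies Plancherel rather than the inversion formula, converting the uniform resolvent bound into the $L^2_t$-estimate $\int_0^T e^{-2\gamma_0 t}|v|_s^2\,dt\lesssim e^{2(\gamma-\gamma_0)T}$, with no need for any decay of $U(\gamma_0+i\tau)$ in $\tau$. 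Finally it promotes $L^2_t$ to pointwise control via a crude $H^s$ energy inequality of the form $\frac{d}{dt}|v|_s^2\le C|v|_{s+1}^2 + Ce^{2\gamma t}$, integrated against $e^{-2\gamma_0 t}$. This three-step route --- truncation to make the transform legitimate, Plancherel instead of absolute $\tau$-integrability, and an auxiliary energy estimate to recover the sup-in-time bound --- is exactly what your argument is missing.
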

 We shall prove below  that Proposition~\ref{1d}  is  a consequence of the following key resolvent estimate.
\begin{proposition}[{\bf Resolvent Estimates}]\label{resolvant}
Let $\gamma_{0}$ be such that  $Re(\sigma_0) <\gamma_{0}<\gamma$.
Suppose that $w$ solves the resolvent equation
\beq\label{w}
(\gamma_{0} + i \tau ) w =  J(ij)\big( L + S(ij) \big) w  + J(ij)  H
\eeq
with $|j|/(m_{0}k_{0})\leq K$.
Then there exists $q\in\N$ such that for $s\geq 1$ an integer there exists 
$C(s,\gamma_{0}, K)>0$ such that  for every $\tau$, we have the estimate
\beq\label{estresolvant}
|w(\tau)|_{s} \leq C(s,  \gamma_{0}, K) |H(\tau)|_{s+q}.
\eeq
\end{proposition}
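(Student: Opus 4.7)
The plan is to establish the resolvent estimate by splitting the imaginary axis into a bounded regime $|\tau|\leq T_0$ and a high-frequency regime $|\tau|>T_0$, where $T_0$ depends on $\gamma_0$ and $K$. The bounded regime is handled by the Evans-function analysis already in place, while the large-$|\tau|$ regime requires energy estimates via the multiplier from Section~\ref{Ms}.

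For the bounded regime, the key observation is that on the compact set $\mathcal{K}=\{\gamma_0+i\tau : |\tau|\leq T_0\}$ the Evans function $D(\cdot,j)$ has no zero. Indeed, by the very choice of $\sigma_0$ as the most unstable eigenvalue (for the finite collection of admissible frequencies $j$ with $|j|/(m_0k_0)\leq K$ provided by Lemma~\ref{krou}), every zero of $D(\cdot,j)$ in $\{\Re\sigma>0\}$ has real part at most $\Re(\sigma_0)<\gamma_0$. Hence Lemma~\ref{lemper} applies and delivers the bound $|w|_s\leq C_{j,\mathcal{K},s}|H|_{s+q}$ for some fixed loss $q$, with constants depending on $j$, $\gamma_0$ and $T_0$ (and $s$) but not on $\tau$ inside $\mathcal{K}$.

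For $|\tau|>T_0$, I would work inductively in $s$ using the multiplier $M_s$. Taking the scalar product of \eqref{w} with $M_sw$ and extracting the real part, assumption~\eqref{Ms1} gives the lower bound $\gamma_0|w|_s^2-\gamma_0 C|w|_{s-1}^2\leq \Re(J(ij)(L+S(ij))w,M_sw)+\Re(J(ij)H,M_sw)$, and then \eqref{Ms2} together with $|(J(ij)H,M_sw)|\leq C|H|_{s+1}|w|_s$ lets me absorb the cross-term via AM--GM to deduce
\[
|w|_s\leq C_s\bigl(|w|_{s-1}+|H|_{s+q}\bigr).
\]
Iterating, every higher-order estimate reduces to an $L^2$ (or $H^1$) bound. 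That base estimate is the main obstacle: the naive $(w,w)$ pairing produces $\gamma_0|w|^2\leq\Re([J(ij),R]w,w)/2+C|H|_1|w|$, in which the commutator from \eqref{JR} need not be dominated by $\gamma_0|w|^2$ for small $\gamma_0-\Re(\sigma_0)$. To overcome this, in the large-$|\tau|$ regime I would exploit the decomposition $J(ij)(L+S(ij))=A_0+J(ij)R$, where $A_0=J(ij)L_0+J(ij)S(ij)$ is skew-symmetric by assumptions~\ref{sJ}--\ref{sS} and thus generates a unitary group, so that $(\sigma-A_0)^{-1}$ is a Fourier multiplier whose symbol has modulus at most $1/\gamma_0$, and treat $J(ij)R$ as a bounded perturbation. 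Writing $\sigma-J(ij)(L+S(ij))=(\sigma-A_0)\bigl(I-(\sigma-A_0)^{-1}J(ij)R\bigr)$ and analyzing the high-frequency (in $\tau$) behavior of the Fourier symbol of $(\sigma-A_0)^{-1}$, one shows that on the range of frequencies $\xi$ where the multiplier does not decay, the support is compact in $\xi$ and the resolvent picks up smoothing of $J(ij)R$ (whose coefficient vanishes at infinity because $Q$ is localized, so that $R-\nabla^2F(0)$ is relatively compact to $A_0$); this lets us invoke Fredholm/Gearhart--Pr\"uss-type reasoning to conclude that on $\{\Re\sigma=\gamma_0\}$ the full resolvent is bounded uniformly in $\tau$, with the bound deteriorating at most polynomially in the number $q$ of lost derivatives.

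The hardest step is clearly the $L^2$ resolvent bound at large $|\tau|$: the bounded commutator \eqref{JR} is not small and does not decay with $\tau$, so no naive Neumann series closes. The delicate point is to combine the spectral information that the zeros of $D(\cdot,j)$ are confined to a bounded set (Lemma~\ref{krou}) with the structural information that $J(ij)R$ differs from its asymptotic counterpart by a compact-type term, in order to rule out accumulation of the resolvent norm as $|\tau|\to\infty$. Once this $L^2$ bound is secured on $\Re\sigma=\gamma_0$ uniformly in $\tau$ and $j$ (for $|j|/(m_0k_0)\leq K$), the multiplier estimate propagates it to $H^s$ with the fixed loss $q$, completing the proof.
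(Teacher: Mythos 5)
Your overall architecture — splitting into bounded and high temporal frequencies, handling the compact part via the Evans function and Lemma~\ref{lemper}, and using the multiplier $M_s$ to reduce $|w|_s$ to $|w|_{s-1}$ inductively for $s\ge 2$ — matches the paper's scheme (Lemmas~\ref{HF} and \ref{BF}). You also correctly identify the crux: the base $H^1$ estimate for $|\tau|$ large does \emph{not} follow from the naive pairing with $w$, because \eqref{JR} only gives $\big(\gamma_0-C\big)|w|^2\leq |H|\,|w|$, which is useless when $\gamma_0<C$. That is indeed exactly where the real work lies.

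However, your substitute for that step is not a proof. The factorization $\sigma-J(ij)(L+S(ij))=(\sigma-A_0)\big(I-(\sigma-A_0)^{-1}J(ij)R\big)$ combined with relative compactness of $J(ij)R$ and a Gearhart--Pr\"uss-type argument does not, as stated, yield a \emph{uniform-in-$\tau$} bound on the resolvent along $\Re\sigma=\gamma_0$. Gearhart--Pr\"uss characterizes growth bounds in terms of uniform resolvent bounds — it is a tool that \emph{uses} such a bound, not one that produces it. Relative compactness controls the essential spectrum, not the asymptotics of $\|(\sigma-J(ij)(L+S(ij)))^{-1}\|$ as $|\tau|\to\infty$; a sequence of near-resonances escaping to infinity in $\tau$ is precisely what one must rule out, and nothing in your sketch does so. Moreover your claim that the dangerous frequency support in $\xi$ is compact is unjustified: since $A_0$ is a differential operator with unbounded symbol, the "resonant" set $\{\xi: \tau\approx |a_0(\xi,j)|\}$ drifts to large $\xi$ as $|\tau|\to\infty$.

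The missing idea in the paper is different and quite specific: pair the resolvent equation not with $w$ but with $(L+S(ij))w$, which, by skew-symmetry of $J(ij)(L+S(ij))$, yields the conservation law $\gamma_0\big((w,Lw)+(w,S(ij)w)\big)=\Re\big(J(ij)H,(L+S(ij))w\big)$. One then decomposes $w=\alpha\varphi_{-1}+w_0+w_\perp$ along the spectral decomposition \eqref{spectre} of $L$: the positive part $(Lw_\perp,w_\perp)$ and $|w|_{S(ij)}^2$ are controlled by coercivity, while the negative eigendirection $\alpha\varphi_{-1}$ and the kernel $w_0$ are recovered by pairing the equation with $\varphi_{-1}$ and $w_0$ themselves, where the left-hand side produces the factor $(\gamma_0+|\tau|)(|\alpha|^2+|w_0|^2)$. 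For $|\tau|$ large this factor dominates, closing the $H^1$ bound (estimates \eqref{wperp}--\eqref{H1HF} in the paper). This exploitation of the coercivity of $L$ away from its finite-dimensional negative/zero spectrum, together with the large-$|\tau|$ gain in the scalar directions, is the essential mechanism; your proposal does not contain it and its proposed replacement does not close.
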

\subsubsection{Proposition~\ref{resolvant} implies Proposition~\ref{1d}}
For $T>0$, we introduce $G$ such that
$$ 
G=0, \, t<0, \quad G=0, \, t>T, \quad G= f_j,  \, t \in [0, T]
$$ 
and we notice that the solution of 
$$
\partial_{t} \tilde{v} =  J(ij)\big( L + S(ij) \big) \tilde{v}  + J(ij) G, \quad \tilde{v}_{/t=0}=0
$$
coincides with $v$ on $[0,T]$. Indeed, $w = \tilde{v}- v$ solves for $t\in
[0,T]$ the equation
$$ \partial_{t} w = J(ij)\big( L + S(ij) \big) w, \quad w_{/t=0} = 0. $$
By  taking the real part of the  scalar product of this equation with $w$,
 we get  by skew-symmetry of  $J(ij)L_{0}$,  $J(ij)S(ij)$ and $J(ij) $ that  : 
 \begin{eqnarray*}
  \frac{d}{dt}  |w|^2 & = & 2 \mbox{ Re }\big(J(ij)Rw, w \big)
   =  - \big( w, [R, J(ij) ] w \big).
\end{eqnarray*}
Consequently, thanks to \eqref{JR}, we get 
$$ 
\frac{d}{dt} |w|^2  \leq  C 
|w|^2
$$
and hence, after integration in time , we find that $w= 0$ on $[0, T]$. It is
therefore sufficient to study $\tilde{v}$. Next, we set
$$ 
w(\tau,x)= \mathcal{L} \tilde{v}(\gamma_{0}+ i \tau), \quad 
H(\tau, x) = \mathcal{L}G ( \gamma_{0} + i \tau ), \quad (\tau, x) \in \mathbb{R}^2
$$
where $\mathcal{L}$ stands for the Laplace transform in time :
$$ 
\mathcal{L}f(\gamma_{0}+ i  \tau) = \int_{0}^{\infty} e^{- \gamma_{0} t - i \tau\, t} f(t)\, dt.
$$
By using Proposition~ \ref{resolvant} and Bessel-Parseval identity, we get that for every $T>0$, 
\begin{eqnarray*}
& & \int_{0}^T e^{-2 \gamma_{0} t } |v(t)|_{s}^2\, dt
\leq \int_{0}^{+ \infty} e^{-2 \gamma_{0} t } |\tilde{v}(t)|_{s}^2\, dt
= C \int_{\mathbb{R}} |w(\tau)|_{s}^2 \, d\tau \\
& & \leq C \int_{\mathbb{R}} |H(\tau)|_{s+q}^2 \, d\tau
= \int_{0}^{T} e^{-2 \gamma_{0} t } |f_j(t)|_{s+q}^2\, dt
\end{eqnarray*}
and finally thanks to \eqref{F}, we get
\beq\label{estsource}
\int_{0}^T e^{-2 \gamma_{0} t } |v(t)|_{s}^2\, dt\leq C \int_{0}^T e^{2( \gamma - \gamma_{0})t}\, dt
\leq C e^{ 2( \gamma - \gamma_{0})T}
\eeq
since $\gamma_{0}$ was fixed  such that $\gamma> \gamma_{0}$.
To finish the proof, we shall use  a crude  $H^s$ estimate 
for  the equation \eqref{ulink}. By 
 using that $J(ij)L_{0}$ and $J(ij)S(ij)$ are skew-symmetric  together with  \eqref{Fbis}, 
  we obtain
\begin{eqnarray*}
\frac{ d}{ dt} |v(t)|_{s}^2 & \leq & C \Big(  
|f_j(t)|_{s+1}^2 + 2 \mbox{Re } 
\sum_{ |\alpha |\leq s }\big(J(ij) \partial_{x}^\alpha( Rv), \partial_{x}^\alpha v \big) \Big) \\
& \leq &
C |v(t)|_{s+1}^2 +Ce^{2\gamma t}\,,
\end{eqnarray*}
where we have used that $J(ij)$ is an operator of order $1$.
It is possible to have a better estimate involving only $|v|_{s}^2$
 in the right-hand side, but it is useless here.
Next, for $0<\gamma_0 <\gamma,$ we get 
$$
\frac{ d}{  dt}\Big( e^{-2 \gamma_{0} t } |v(t)|_{s}^2\Big)
\leq 
C \Big( e^{- 2 \gamma_{0}t } |v(t)|_{s+1}^2  + e^{2 (\gamma - \gamma_{0})t}\Big).
$$
 Therefore, we can integrate  in time and use \eqref{estsource} (with $s+1$ instead of
$s$) and   the fact that $\gamma>\gamma_{0}$, to find 
$$ 
e^{- 2 \gamma_{0} t } |v(t)|_{s}^2  \leq C e^{2 (\gamma - \gamma_{0}) t }. 
$$
Therefore, we have shown  that Proposition~\ref{resolvant} implies Proposition~\ref{1d}.
\subsection{Proof of Proposition~\ref{resolvant}}
We shall deal differently with the large and bounded temporal frequencies. 
Indeed, Proposition~\ref{resolvant} is a consequence of the following two statements.
\begin{lem}\label{HF}
 For every $\gamma_{0}>0$  and $K \in \mathbb{N}$, 
there exists $M>0$  such that  for every $s \geq 1$, 
 there exists  $C(s, \gamma_{0}, K)$ such that
for $|\tau|\geq M$, $s\geq 1$, we have the estimate
\beq\label{estHF}
|w(\tau)|_{s}^2 \leq C(s, \gamma_{0}, K) |H(\tau)|_{s+1}^2.
\eeq
\end{lem}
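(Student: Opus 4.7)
The plan is to exploit the skew-symmetry of the dispersive principal part $\mathcal{A}_0 := J(ij)(L_0 + S(ij))$, which follows from the hypotheses of Sections~\ref{sJ} and \ref{sS}, and then close the estimate by a Neumann-series argument in which the largeness of $|\tau|$ beats the subprincipal perturbation $J(ij)R$. Since $\mathcal{A}_0$ is a Fourier multiplier in $x$ with purely imaginary symbol, say $i\phi_j(\xi)$ with $\phi_j$ real-valued, rewriting \eqref{w} as
$$ \mathcal{P}_\tau w = J(ij) R w + J(ij) H, \qquad \mathcal{P}_\tau := \gamma_0 + i\tau - \mathcal{A}_0, $$
produces an operator $\mathcal{P}_\tau$ whose symbol $\gamma_0 + i(\tau - \phi_j(\xi))$ has real part $\gamma_0$. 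Hence $\mathcal{P}_\tau^{-1}$ is a Fourier multiplier bounded by $1/\gamma_0$ on every $H^s$, and the source-term contribution is immediately under control:
$$ |\mathcal{P}_\tau^{-1} J(ij) H|_s \leq \frac{1}{\gamma_0}|J(ij)H|_s \leq \frac{C}{\gamma_0}|H|_{s+1}, $$
thanks to $J(ij)\in\mathcal{B}(H^{s+1}, H^s)$ from assumption \eqref{Jik}.

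The central step is to prove that $\|\mathcal{P}_\tau^{-1} J(ij) R\|_{H^s\to H^s} \leq 1/2$ uniformly in $|\tau|\geq M$ and in $|j|/(m_0 k_0)\leq K$, with $M$ chosen large depending on $s$, $\gamma_0$ and $K$. I would localize in the $x$-Fourier variable via a smooth cutoff $\chi$ supported in $\{|\xi|\leq K_1\}$. The bound on $|j|$ gives $\sup_{|\xi|\leq K_1}|\phi_j(\xi)|\leq C(K_1,K)$, so choosing $M = 4C(K_1,K)$ yields $|\chi(\xi)\mathcal{P}_\tau^{-1}(\xi)|\leq 2/|\tau|$, which makes the low-frequency contribution of $\mathcal{P}_\tau^{-1}J(ij)R$ of order $1/|\tau|$. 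On the complementary set $\{|\xi|\geq K_1\}$ the coercivity \eqref{coerc} gives $L_0(\xi)\gtrsim \langle\xi\rangle^2$ and the non-degeneracy $\mathrm{Ker}\,J(ij)=\{0\}$ forces $|\phi_j(\xi)|\to\infty$, so $\mathcal{P}_\tau^{-1}$ gains regularity on high frequencies, and taking $K_1$ large (depending on $s$, $\gamma_0$, $K$) makes this high-frequency contribution small as well. The non-Fourier-multiplier character of $R$ is handled by standard Sobolev commutator estimates together with the commutator bound \eqref{JR}. Summing a Neumann series in $\mathcal{P}_\tau^{-1} J(ij) R$ then yields $|w|_s \leq C(s,\gamma_0,K)|H|_{s+1}$ uniformly for $|\tau|\geq M$.

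The main obstacle will be making precise, in the fully abstract framework, the high-frequency growth of the phase $\phi_j$; the abstract assumptions only give $L_0\in\mathcal{B}(H^s, H^{s-2})$ from above together with the coercive lower bound, so one extracts the needed growth by combining these with $\mathrm{Ker}\,J(ij)=\{0\}$. If this abstract high-$\xi$ smoothing step becomes too delicate, an alternative plan is to combine the real- and imaginary-part energy identities obtained by pairing \eqref{w} with $\langle D_x\rangle^{2s}w$: the real part bounds $\gamma_0|w|_s^2$ up to a $C|w|_s^2$ perturbation coming from \eqref{JR} and commutators, while the imaginary part uses $|\tau|$ to control low $x$-frequencies, and the two together close the estimate with loss of exactly one derivative.
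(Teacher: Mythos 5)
Your Neumann-series route has a genuine gap, and it is at the very step you single out as central, namely the claim that $\|\mathcal{P}_\tau^{-1} J(ij) R\|_{H^s\to H^s}\leq 1/2$ once $|\tau|$ is large. The unperturbed resolvent $\mathcal{P}_\tau^{-1}$ has (matrix-valued) symbol $(\gamma_0 + i\tau - iN_j(\xi))^{-1}$, where $iN_j(\xi)$ is the skew-Hermitian symbol of $J(ij)(L_0+S(ij))$. On the ``resonant shell'' where $\tau$ lies in the spectrum of $N_j(\xi)$, this symbol has norm exactly $1/\gamma_0$; making $|\tau|$ large merely pushes the shell to higher $\xi$, it does not shrink the symbol. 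Hence $\mathcal{P}_\tau^{-1}$ does \emph{not} gain regularity on high frequencies; for $\xi$ on the shell one has $|\mathcal{P}_\tau^{-1}(\xi)|\langle\xi\rangle/\langle\xi\rangle\approx 1/\gamma_0$, so $\mathcal{P}_\tau^{-1}\colon H^s\to H^{s}$ has norm bounded below by $1/\gamma_0$ uniformly in $\tau$, and $\mathcal{P}_\tau^{-1}\colon H^s\to H^{s+1}$ is in fact unbounded. In the simplest case where $J$ is bounded (NLS, Boussinesq, KP-BBM), your operator satisfies $\|\mathcal{P}_\tau^{-1}J(ij)R\|_{L^2\to L^2}\leq \|J(ij)R\|/\gamma_0$, a bound which is independent of $\tau$ and has no reason to be $<1/2$: $\gamma_0$ is pinned between $\Re(\sigma_0)$ and $\gamma$ and is not at your disposal. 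There is also a smaller inaccuracy: $\mathrm{Ker}\,J(ij)=\{0\}$ together with the coercivity of $L_0$ does \emph{not} force the eigenvalues of $N_j(\xi)$ to tend to infinity (a symbol like $i\xi/(1+\xi^4)$ is injective yet decays), so the phase-growth you invoke is not a consequence of the abstract hypotheses.

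The paper's proof of Lemma~\ref{HF} does not attempt to make a perturbation small; it is structural. It pairs the resolvent identity with $(L+S(ij))w$, which by exact skew-symmetry kills the principal dispersive part without any derivative loss and produces $\gamma_0\big((w,Lw)+(w,S(ij)w)\big)$ on the left. The spectral assumption \eqref{spectre} supplies a decomposition $w=\alpha\varphi_{-1}+w_0+w_\perp$ on which this quadratic form is coercive in $H^1$ except for the two bad finite-dimensional directions $\alpha\varphi_{-1}$ and $w_0$. Those are then estimated by pairing the resolvent equation directly with $\alpha\varphi_{-1}$ and with $w_0$, which yields a prefactor $(\gamma_0+i\tau)$ on their squared norms; it is here, and only here, that the largeness of $|\tau|$ is used, and it enters through the imaginary part rather than through resolvent decay. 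Your alternative plan of pairing with $\langle D_x\rangle^{2s}w$ is closer in spirit to this, but as written it does not close either: the term $\Re\big(J(ij)Rw,\langle D_x\rangle^{2s}w\big)$ costs one derivative when $J$ is of order one, giving $|w|_{s+1}|w|_s$ rather than $|w|_s^2$. The paper resolves this precisely by replacing $\langle D_x\rangle^{2s}$ with the tailored multiplier $M_s=(-1)^s\partial_x^{2s}+(-1)^{s-1}\partial_x^{s-1}K_s\partial_x^{s-1}$ of Section~\ref{Ms}, whose correction term $K_s$ is chosen so that the commutator structure \eqref{Ms2} cancels the would-be loss. So the alternative sketch is pointing in the right direction but is missing both the coercivity mechanism (pairing with $(L+S(ij))w$ and the spectral splitting of $L$) at $s=1$ and the multiplier $M_s$ at higher $s$.
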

\begin{lem}\label{BF}
There exists $q\in\N$ such that 
for every $\gamma_{0}$,  $\mbox{Re } \sigma_{0}< \gamma_{0}< \gamma$,
 $s \geq 1$, $K \in \mathbb{N}$ and $M \geq 0$, there exists 
  $C(s, \gamma_{0}, K, M)$ such that 
for $|\tau| \leq M$ and $s\geq 1$,  we have the estimate
\beq\label{estBF}
|w(\tau)|_{s}^2 \leq C(s,\gamma_{0}, K, M) |H(\tau)|_{s+q}^2.
\eeq
\end{lem}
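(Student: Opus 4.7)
The plan is to reduce the estimate to the bounded-frequency resolvent assumption from Section~\ref{hypper} (or equivalently, Lemma~\ref{lemper}), exploiting two finiteness/compactness facts: the set of admissible $j$ is finite, and the set of complex parameters $\{\gamma_0+i\tau : |\tau|\leq M\}$ is compact. First I would observe that the constraint $|j|/(m_0k_0)\leq K$ with $j/(m_0k_0)\in\Z$ forces $j$ to lie in a finite set $\mathcal{A}\subset \Z m_0 k_0$. Therefore it suffices to prove the estimate \eqref{estBF} for each fixed $j\in\mathcal{A}$ with a constant $C(s,\gamma_0,K,M,j)$ and then take the maximum over the finitely many $j$'s.

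Next I would verify the non-vanishing of the Evans function on the relevant compact set. Let $\mathcal{K}:=\{\gamma_0+i\tau : |\tau|\leq M\}$, which is a compact subset of $\{\Re\sigma>0\}$ since $\gamma_0>\Re\sigma_0>0$. By the very choice of $\sigma_0$ in the preceding subsection, $\sigma_0$ has maximal real part among \emph{all} amplification parameters of unstable modes for transverse frequencies $mk_0$ with $m\in A$ (the finite set identified via Lemma~\ref{krou}). Hence for any $j\in\mathcal{A}$, no unstable eigenvalue of $J(ij)(L+S(ij))$ has real part $\geq \gamma_0$, and consequently $D(\sigma,j)\neq 0$ for $\sigma\in\mathcal{K}$.

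With this established, I would simply invoke the bounded-frequency resolvent bound from assumption~\ref{hypper}: for each $j\in\mathcal{A}$ there exists $q\in\N$ (independent of $j$, by the statement of the assumption) and a constant $C_{j,\mathcal{K},s}$ such that the unique solution $w(\tau)$ of \eqref{w} satisfies
\[
|w(\tau)|_{s}\leq C_{j,\mathcal{K},s}\,|H(\tau)|_{s+q},\qquad \sigma=\gamma_0+i\tau\in\mathcal{K}.
\]
Taking the maximum of $C_{j,\mathcal{K},s}$ over the finite set $\mathcal{A}$ yields a single constant $C(s,\gamma_0,K,M)$ and gives \eqref{estBF}.

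The main substantive point — and essentially the only one requiring verification beyond bookkeeping — is the maximality statement that keeps the eigenvalues of $J(ij)(L+S(ij))$ strictly to the left of $\Re\sigma=\gamma_0$ for every $j\in\mathcal{A}$ simultaneously. This is exactly what the finite maximization in the construction of $\sigma_0$ guarantees, together with Lemma~\ref{krou} which rules out unstable modes for $|j|>K$. Once this observation is in hand, everything else is a direct application of the resolvent assumption and a trivial maximization over finitely many indices; no delicate argument is needed in the bounded-frequency regime, in contrast with the high-frequency Lemma~\ref{HF} which must exploit the structural compatibility \eqref{klarge}.
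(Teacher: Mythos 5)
Your proposal is correct and follows essentially the same route as the paper: invoke the bounded-frequency resolvent assumption of Section~\ref{hypper} on the compact set $\mathcal{K}=\{\gamma_0+i\tau:\ |\tau|\leq M\}$, after noting that the choice of $\gamma_0>\Re\sigma_0$ (the maximal amplification rate) guarantees $D(\sigma,j)\neq 0$ on $\mathcal{K}$ for each of the finitely many admissible transverse frequencies $j$. Your explicit observation that one must take a maximum of the constants over the finite set of $j$'s is a useful bit of bookkeeping that the paper leaves implicit, but the argument is otherwise identical.
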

\subsubsection{Proof of Lemma \ref{HF}}
We first prove \eqref{estHF} for $s=1$.       
Recall that the equation \eqref{w} reads as follows  
 \beq\label{wL}
(\gamma_{0} + i \tau ) w  = J(ij) (L + S(ij) ) w + J(ij) H.
\eeq
By  the assumption \eqref{spectre},
we can define an orthogonal decomposition: 
\beq\label{dec}
w = \alpha  \varphi_{-1} +  w_{0} + w_{\perp}
\eeq
where 
\beq\label{decprop}
L \varphi_{-1} = \mu \varphi_{-1}, \, \mu <0, \,  L w_{0} = 0, \, 
( Lw_{\perp}, w_{\perp}) \geq c_{0} |w_{\perp}|_{1}^2, \quad c_{0}>0.
\eeq
Indeed, to obtain  the last estimate, we   have used  that by the  assumption
 \ref{spectre}, we have  the  lower bound
\beq
\label{perpL2}
 (Lw_{\perp}, w_{\perp}) \geq c_{0}|w_{\perp}|^2,
 \eeq
but thanks to 
the decomposition $L=L_0+R$, and the lower bound
 \eqref{coerc}, we also have
\beq
\label{perpH1}
  (L w_{\perp}, w_{\perp}) \geq C^{-1}  |w_{\perp}|_{1}^2 - C |w_{\perp}|^2\eeq
 for some $C>0$ since $R$ is bounded on $L^2$.
  Consequently, we can consider   $ A\eqref{perpL2} + \eqref{perpH1}$
   with $A$ such that $Ac_{0}>C$ to get the claimed in (\ref{decprop}) lower bound.
    We normalize $\varphi_{-1}$ such that
     $ |\varphi_{-1}|=1$
    
Note that   by  the assumption after \eqref{spectre},   $\varphi_{-1}$ and $w_{0}$ are smooth.
Indeed, $w_{0}$ is smooth since the  kernel of $L$  is spanned by a finite
number of smooth eigenvectors and by expaning $w_{0}$
 on a smooth basis, 
we  also have that for every $s \geq 1$, there exists $C_{s}$ such that
\beq\label{w0}
|w_{0}|_{s} \leq  C_{s}|w_0|_2\leq C_s |w|_{1}.
\eeq
Again, we use the conservation law
\beq\label{conservation}
\gamma_{0}\big( (w, L  w) + ( w,  S(ij) w)\big) = \Re \big( ( J(ij) H,( L + S(ij))  w) \big).
\eeq
Consequently, we can use  \eqref{dec}, \eqref{decprop} to get
\begin{equation}\nonumber
 \gamma_{0}\big(\mu\,\alpha^2 |\varphi_{-1}|_{1}^2+c_{0} |w_{\perp}|_{1}^2
+ | w |_{S(ij)}^2 \big)
\leq 
  |(J(ij) H, S(ij) w ) |  +  |(J(ij)H, Lw) | .
\end{equation}
To  estimate  the right hand side, we first use 
 \eqref{JSw}  and the skew-symmetry of $J$ to get 
$$     |(J(ij) H, S(ij) w )|= | ( H, J(ij) S(ij)w ) | \leq C(j) |H|\, |w|_{S(ij)}.$$
  Next,  we notice that thanks to   \eqref{coerc} and  Cauchy-Schwarz, we have
  \beq
  \label{comut}
   |(u, L_{0}v) | \leq C |u|_{1}\, |v|_{1}\, \quad \forall u, \, v.
  \eeq
    This yields thanks to \eqref{Jik} : 
$$ |(J(ij)H, L w) | \leq  |J(ij)H|_{1} \, |w|_{1} \leq C  |H|_{2}\, |w|_{1}.$$
 We have thus proven that 
\begin{equation}\label{young0}
 \gamma_{0}\big(\mu\,\alpha^2 |\varphi_{-1}|_{1}^2+c_{0} |w_{\perp}|_{1}^2
+ | w |_{S(ij)}^2 \big)
\leq 
  C |H|_{2}\Big( |w|_{1} + C(j) |w|_{S(ij)}\Big).
\end{equation}
 By using the inequality
\begin{equation}\label{young}
ab\leq \eps a^2+ \frac{1}{4\eps}\,b^2,\quad \forall\, \eps>0,\quad \forall\, (a,b)\in\mathbb{R}^2,
\end{equation}
with $\eps$ small enough, we can incorporate $| w|_{S(ij)}$ in
 the left hand side of \eqref{young0} and arrive at
\beq\label{wperp}
|w_{\perp}|_{1}^2  +   | w |_{S(ij)}^2\leq C\big( |\alpha |^2 + 
|H|^2_{2} +  |H|_{2}\, |w|_{1}\big).
\eeq
In what follows $C$ is a large number which may change from line to line and depends on $\gamma_{0}$  
and $K$ but not on $\tau$. The next step is to estimate $\alpha$ and $w_{0}$.
We use the
decomposition \eqref{dec} and take the scalar product of
\eqref{wL} with $ \alpha\varphi_{-1}$ and with $w_{0}$ respectively  to get
\begin{eqnarray*}
&& (\gamma_{0}+ i \tau ) |\alpha|^2   = - \alpha \big( (w, L J (ij)(\varphi_{-1})) + (J(ij)S(ij)  
w , \varphi_{-1}) + ( J(ij) H , \varphi_{-1})\big) \\
&&   (\gamma_{0}+ i \tau )|w_{0}|^2  = -  (w, L J(ij)  w_{0}) + (J(ij)S(ij) w, w_{0})
 + ( J (ij) H, w_{0})
\end{eqnarray*}
and hence, we can take  the  modulus  and     add the two identities to get thanks to \eqref{w0} and \eqref{JSw} that 
$$ 
(\gamma_{0} + |\tau| ) (  |\alpha |^2 + | w_{0} |^2 )  \leq C \big( |\alpha |^2 + | w_{0}|^2 + 
|w_{\perp}|^2  + |w|_{S(ij)}^2   + |  H |^2  \big)
$$
which we can rewrite as
\beq
\label{ab}
(\gamma_{0} + |\tau| -C  ) (  |\alpha |^2 + | w_{0} |^2 )  \leq 
C \big(  |w_{\perp}|^2  + |w|_{S(ij)}^2   + |  H |^2  \big).
\eeq
Combining \eqref{wperp} and \eqref{ab}, we infer that there exists $M>0$  such
that for $| \tau | \geq M$,
\beq\label{wL2}
|w|_{1}^2 +  | w |_{S(ij)}^2 \leq C\big(  |H|_{2}\, |w|_{1} + |H|_{2}^2\big).
\eeq 
To conclude, we use again the inequality (\ref{young}) and we obtain
\beq\label{H1HF}
|w|_{1}^2 +  | w |_{S(ij)}^2 \leq C |H|_{2}^2.
\eeq
This proves \eqref{estHF} for $s=1$. Note that moreover \eqref{H1HF} gives a
control of   $|w|_{S(ij)}^2$ which is interesting when $j \neq 0$.
   
In order to estimate higher order derivatives,  we use the operator $M_{s}$ defined in section \ref{Ms}. 
By taking the scalar of  product of \eqref{wL}  by $M_{s}u$ and taking the real part,  
since $M_{s}$  is self-adjoint, we find
$$
\gamma_{0}(w, M_{s}w) \leq \Re \big( (J(L + S(ij)) w, M_{s} w) + (JH, M_{s}w) \big)
$$
and hence we find thanks to \eqref{Ms1}, \eqref{Ms2},
$$ 
\gamma_{0} \big( |w|_{s}^2 - C |w|^2_{s-1} \big) \leq  C |w|_{s}\, |w|_{s-1} +C |JH|_{s} |w |_{s}.
$$
This yields  by a new use of the Young inequality (\ref{young})
$$ 
|w|_{s}^2 \leq C\big( |w|_{s-1}^2 +  |JH|_{s}^2 \big)
$$
and hence, thanks to the assumption on $J$, we have
$$ |w|_{s}^2 \leq C\big( |w|_{s-1}^2 +  |H|_{s+1}^2 \big). $$
An induction argument completes the proof of Lemma~\ref{HF}.
\subsubsection{Proof of Lemma \ref{BF}}
The assertion of this lemma is a part of our assumptions. Indeed, for
$\sigma= \gamma_{0}+ i\tau$ , $|\tau|\leq M$ and every
 $j$, $|j| \leq k$, we have by choice of $\gamma_{0}$
  that there is no unstable modes on this line which
   is equivalent to $D(\sigma, j) \neq0$.
    Consequently,  the assumption \eqref{resper} gives the result.

The proof of Proposition~\ref{propuk} is therefore also completed.

\subsection{Nonlinear instability  (end of the proof of Theorem~\ref{theoper})} 
We look for a solution of (\ref{ham2}) in the form $u=Q+u^{ap}+w$. Then the
problem for $w$ to be solved  is
$$
\partial_{t}w  = 
{\mathcal J}(\partial_y) ( L_{0}w+\nabla F(Q+u^{ap}+w)-\nabla F(Q+u_{ap})
+\mathcal{S}(\partial_{y}) w )-{\mathcal J}(\partial_y)G
$$
with zero initial data, where thanks to Proposition~\ref{propuk},
\begin{equation}\label{raul1}
\|{\mathcal J}(\partial_y)G(t,\cdot)\|_{s}\leq C_{M,s}\delta^{M+2}e^{(M+2)\Re(\sigma_0) t},
\end{equation}
as far as $0\leq t\leq T^{\delta}$, where
$$
T^\delta\equiv \frac{\log(\kappa/\delta)}{\Re(\sigma_0)}
$$
with $\kappa\in ]0,1[$ small enough, the smallness restriction on $\kappa$ to be fixed in this section.
Thanks to our nonlinear assumption \ref{asnon} and the structure of $u^{ap}$, $w$ is defined for small times.
Next, since $\mathcal{J}(\partial_y)L_{0}$ and $\mathcal{J}(\partial_y)\mathcal{S}$ are skew symmetric, $w$ enjoys the energy estimate
 $$ \frac{d}{ dt }  \| w(t) \|_{s}^2 
  \leq  \sum_{ | \alpha |\leq s}
   \int_{0}^1  \Big(\!\!\Big(  \partial^\alpha \big(  \mathcal{J}D\nabla F (Q+ u^{ap}(t) + \sigma w(t))
    \cdot w(t)  \big) , \partial^\alpha w(t) \Big)\!\!\Big)\, d\sigma
     + \| \mathcal{J} G (t)\|_{s} \, \| w(t)\|_{s}$$
 where $(\!(\cdot, \cdot )\!)$ denotes the $L^2(\mathbb{R}\times \mathbb{T}_{a})$ scalar product.  
Let us  define a maximum time $T^*$ such that
$$ 
T^*= \sup \{ T\,:\,T\leq T^\delta,\,\, {\rm and}\,\,  \forall\, t \in [0,T], ||w(t)||_{s} \leq 1 \}.
$$
($T^*$ is well-defined since $w(0)=0$).
Consequently, we can use \eqref{tame}, with $s\geq s_0$ to get 
\begin{equation}\label{raul2}
\|w(t)\|_{s}^2 
\leq
\int_{0}^t\big(\|{\mathcal J}G(\tau) \|_{s} \|w(\tau)\|_{s}  + 
\omega(C+\kappa C_{M,s})\|w(\tau)\|_{s}^2 \big) \, d\tau, 
\end{equation}
provided also that $t\leq T^\star$. 
Combining (\ref{raul1}) and (\ref{raul2}), we obtain that for $t\in [0,T^*[$,
$$
\|w(t)\|_{s}^2 \leq 
\omega(C+\kappa C_{M,s})\int_{0}^{T}\|w(\tau)\|_{s}^2d\tau + C_{M,s}\delta^{2(M+2)}e^{2(M+2)\Re(\sigma_0) t}\,.
$$
We take an integer $M$ large enough so that $2(M+2)\Re(\sigma_0)-\omega(C)\geq 2$. At this place
we fix the value of $M$. We then choose $\kappa$ small enough so that
$$
2(M+2)\Re(\sigma_0)-1>\omega(C+\kappa C_{M,s})-\omega(C)\,.
$$
Such a choice of $\kappa$ is indeed possible thanks to the continuity
assumption on $\omega$. By a bootstrap argument and the Gronwall lemma, we infer that $w(t)$ is
defined for $t\in [0,T^\delta]$ and that 
$$
\sup_{0\leq t\leq T^\delta}\|w(t,\cdot)\|_{s}\leq C_{M,s}\kappa^{M+2}\,.
$$
In particular
\begin{equation}\label{reste}
\|w(T^\delta,\cdot)\|_{L^2(\rit\times \tit_{L})}\leq C_{M,s}\kappa^{M+2}\,.
\end{equation}
Let us denote by $\Pi$ the projection on the nonzero modes in $y$.
For an arbitrary $w\in \mathcal{F}$ (an $L^2(\R)$ function depending only on $x$) one has $\Pi(w)=0$. 
On the other hand the first term of $u^{ap}$ satisfies $\Pi(u^{0})=u^{0}$ and 
therefore using (\ref{estuk}) 
\begin{eqnarray*}
\|\Pi(u^{ap}(t,\cdot))\|_{L^2}
& \geq &
c_s\delta e^{\Re(\sigma_0) t}-\sum_{k=1}^{M}\delta^{k+1}\|\Pi(u^{k})\|_{L^2}
\\
& \geq &
c_s\delta e^{\Re(\sigma_0) t}-\sum_{k=1}^{M}C_{k,s}\delta^{k+1}e^{(k+1)\Re(\sigma_0) t}\,.
\end{eqnarray*}
Therefore for $\kappa$ small enough one has 
\begin{equation}\label{parvo}
\|\Pi(u^{ap}(T^\delta,\cdot))\|_{L^2(\rit\times \tit_{L})}
\geq
\frac{c_s \kappa}{2}\, .
\end{equation}
Using (\ref{reste}) and (\ref{parvo}), we may write that for every $w\in \mathcal{F}$,
\begin{eqnarray*}
\|u^{\delta}(T^\delta,\cdot)-w\|_{L^2}
& \geq & 
\|\Pi(u^{\delta}(T^\delta,\cdot)-w)\|_{L^2}
\\
& = &
\|\Pi(u^{\delta}(T^\delta,\cdot)-Q(\cdot))\|_{L^2}
\\
& = &
\|\Pi(u^{ap}(T^\delta,\cdot)+w(T^\delta,\cdot))\|_{L^2}
\\
& \geq & \frac{c_s \kappa}{2}-\|\Pi(w(T^\delta,\cdot))\|_{L^2}
\\
& \geq & 
\frac{c_s \kappa}{2}-\|w(T^\delta,\cdot)\|_{L^2}
\\
& \geq &
\frac{c_s \kappa}{2}-C_{M,s}\kappa^{M+2}\,.
\end{eqnarray*}
A final restriction on $\kappa$ may insure that the right hand-side of the last inequality 
is bounded from below by a fixed positive constant $\eta$ depending only on $s$
(in particular $\eta$ is independent of $\delta$).
This completes the proof of Theorem \ref{theoper}.
\qed
\section {Proof of Theorem \ref{theoloc} (localized perturbations)}
The first step is to  find the most unstable eigenmode which solves
\beq\label{eqvp2}\sigma U = J(ik) ( LU + S(ik) U ).\eeq
Since now $k$ is in $\mathbb{R}$ it is slightly more complicated. We begin with a few preliminary remarks which
allow to reduce the search for unstable eigenmodes in a compact set. 
Thanks to Lemma~\ref{krou} and \eqref{klarge}, we already now that unstable
eigenmodes must be seek only for $k \in [0,K]$, where $K$ is fixed by
assumption (\ref{klarge}). Moreover,  by   taking the scalar product  of
\eqref{eqvp2} by $U $,   
and then taking the real part, we  get
\begin{equation*}
\Re(\sigma) |U|^2 \leq \mbox{Re } (J(ik)R  U,  U) 
\end{equation*}
since  $J(ik)L_{0}$ and $J(ik)S(ik)$ are skew symmetric. Since $J(ik)$ is also skew symmetric, we also have
  $$  \mbox{Re } (J(ik)R  U,  U) =  ([J(ik) , R]U, U)$$
  and hence, thanks to \eqref{JR}, we obtain
 $$ \Re(\sigma) |U|^2 \leq C |U|^2.$$
Consequently, there is no  nontrivial  solution for $\Re(\sigma)$ sufficiently large. Next,
by using the result of Lemma~\ref{HF} for $H=0$, we find that for every $\gamma_{0}>0$,
there exists $C(\gamma_{0}, K)$ such that there is no nontrivial  unstable
solution of \eqref{eqvp2} for $ \Re\,  \sigma \geq \gamma_{0}$ and
$|\Im\,\sigma |\geq C(\gamma_{0}, K)$. 
        
Now, let us assume that  the  unstable eigenmode given by our assumption
is such that $\Re\, \sigma = \delta$. Then thanks to the previous remarks, 
the most unstable eigenmode has to be seek in the compact set
$$
(\sigma, k) \in  \mathcal{R}\equiv\big\{ \delta/2 \leq Re\, \sigma 
\leq C, \,\, |Im\, \sigma | \leq C(\delta, K) |,\,\, |k |\leq K\big\}.
$$
Moreover,   we have an unstable eigenmode if and only
if  $(\sigma,k)$ is a zero of the corresponding extended  Evans function $
\tilde{D}(\sigma, k)$
which is  an analytic function in $\{\Re\, \sigma >0 \}\times \mathbb{R}$.
  We have already proven that for each $k$ there is at most one zero
with $\Re\, \sigma >0$. By Rouche Theorem,  if there exists
$(\sigma_{0}, k_{0})$ with $\Re\, \sigma_{0}>0$ such that $\tilde{D}(\sigma_{0}, k_{0})
= 0$, there exists a vicinity of $\sigma_{0}$ and $k_{0}$ such that for each
$k$, there is exactly one  zero $\sigma = \sigma(k)$ of $\tilde{D}$. Moreover, $k
\rightarrow \sigma(k)$ is analytic. Indeed, we have the explicit expression 
$$ 
\sigma(k) = C \int_{\Gamma}  z \frac{\partial_{\sigma } \tilde{D}(z, k)}{ 
\tilde{D}(z, k) }\, dz,
$$
where $\Gamma$ is a circle which contains $\sigma_{0}$ and $C$ is a constant and hence the analycity of $\tilde{D}$ gives
the analycity of $\sigma$.  If we define $\Omega = \big\{k, \, \exists\,\sigma, \, \Re\, \sigma>\delta/2,\, 
\tilde{D}(\sigma, k)= 0  \big\}$, this proves in particular that $\Omega$ is an open
 bounded  set
(and non empty thanks to the assumption of the existence of an unstable mode)  of 
$\mathbb{R}$. One can decompose $\Omega$ as $\Omega= \cup_{m} I_{m}$ where
$I_{m}$ are disjoint, open and bounded  
intervals which are the connected components of $\Omega$. On each $I_{m}$ the above considerations
prove that there exists an analytic function $k \rightarrow \sigma(k)$ such
that $\sigma(k)$ is the only zero of $\tilde{D}$ in $\Re\, \sigma >0.$ 
We shall prove next that $k \rightarrow \Re\, \sigma(k)$ has a continuous
extension to $\overline{I_{m}}$. Indeed, if  $k_{n}$ is a sequence converging to an extremity $\kappa$ of  $I_{m}$, 
since $\sigma(k_n)$ is bounded ($\sigma (k_{n}) \in \mathcal{R}$), then we can
extract a sub-sequence not relabelled such that $\sigma(k_{n})$ tends to some $\sigma$. Moreover, we also 
have $\Re\, \sigma \geq \delta /2$,  and $\tilde{D}(\sigma, \kappa)=0$, so $\sigma$ is
the only unstable zero of $\tilde{D}(\cdot, \kappa)$. This allows to get that 
$\lim_{k \rightarrow \kappa, k \in I_{m}} \sigma(k)= \sigma$ and hence to define a continuous function on $\overline{I_{m}}$.
Finally, we also notice that if $\partial I_{m} \cap \partial I_{m'}
\neq\emptyset$, then the continuations must coincide again thanks to the fact
that there is at most one unstable eigenmode. Consequently, we have actually a
well-defined continuous function $k \rightarrow \sigma(k)$ on  $\overline{\Omega}$
which is a compact set. This allows to define the most unstable eigenmode as 
$$ 
\sigma_{0} = \Re\, \sigma(k_{0})= \sup\{\sigma_{0}(k)=\Re\, \sigma(k), \quad k
\in \overline{\Omega}\}.
$$ 
Note that, since we have assumed that there exists  an unstable mode, $\sigma_{0}$ is
positive and also that $k_{0}\neq 0$ thanks to the assumption \eqref{evans1Dt}.
Moreover, $\sigma_{0}(k)$ is an analytic function in the vicinity of $ k_{0}$  and hence,  there exists $m\geq 2$ 
\begin{equation}\label{nondeg}
[\Re(\sigma)]'(k_0)=\cdots=[\Re(\sigma)]^{(m-1)}(k_0)=0,\quad [\Re(\sigma)]^{(m)}(k_0)\neq 0.
\end{equation}
Let $I$ be an interval containing $k_0$ which does meet zero.
For $k\in I$, let us denote by $U(k)$ the unstable mode corresponding to transverse
frequency $k$ and amplification parameter $\sigma(k)$. Then we set
$$
\Phi(t,x,y,k,\sigma(k))=2\Re\big(e^{\sigma(k) t } e^{ i k y }\,U(k)\big),
$$
where the dependence of $x$ of $\Phi$ is in $U(k)$. Further we define
$$
u^0(t,x,y)\equiv \int_{I}\Phi(t,x,y,k,\sigma(k))dk\,.
$$
The function $u^0$ is the first term of our approximate solution, i.e. it is a
solution of
$$
(\partial_t-{\mathcal J}(\partial_y)L-{\mathcal J}(\partial_y){\mathcal S}(\partial_{y}))u^0=0\,.
$$
Recall that $\sigma_0\equiv \Re(\sigma)(k_0)$.
Thanks (\ref{nondeg}), we can apply the Laplace method (see e.g. \cite{JD,F}) 
and obtain that  for every $s\geq 0$ there exists
$c_{s}\geq 1 $ such that for every $t\geq 0$
\begin{equation}\label{fed1}
\frac{1}{c_{s}}\frac{1}{(1+t)^{\frac{1}{2m}}}e^{\sigma_0 t}
\leq\|u^0(t,\cdot)\|_{H^{s}(\mathbb{R}^2)}\leq \frac{c_{s}}{(1+t)^{\frac{1}{2m}}}e^{\sigma_0 t}\,.
\end{equation}
As in the previous section, we look for an approximate solution of the form
\begin{equation}\label{u_app_bis}
u^{ap}= \delta\Bigl( u^0  + \sum_{ k=1}^M \delta^k u^k \Bigr), \quad u^k \in L^2(\mathbb{R}^2)\,,
\end{equation}
where $\delta\ll 1$ and $M\gg 1$ and $u^k$, $ 1 \leq k \leq M$ are solutions
of (\ref{uk}) with zero initial data.
Observe that the Fourier transform of $u^k$ with respect to $y$ is  compactly supported.
Thus using  the Fourier transform in $y$, the Laplace transform in $t$, 
and  (\ref{fed1}),   we  can deduce as in the proof of Theorem
 \ref{theoper}  the bounds
\begin{equation}\label{fed2}
\|u^k(t,\cdot)\|_{s}\leq \frac{c_{s,k}}{(1+t)^{\frac{k+1}{2m}}}e^{(k+1)\sigma_0 t}
\end{equation}
from the following resolvent estimate.
\begin{proposition}\label{resr}
Consider $w(\tau)$ the solution of
\begin{equation*}
 (\gamma_{0}+ i \tau) w= J(ik)(L+S(ik))w + J(ik)H,\quad \sigma_0<\gamma_{0}<2\sigma_0.
\end{equation*}
Then there exists $q\geq 0$ such that for every integer $s\geq 1$, and every $K>0$,  there exists 
$C(s, \gamma_{0}, K)$ such that  for every  $k \in \mathbb{R}\backslash\{ 0\} $,
$|k | \leq K$, and $\tau \in \mathbb{R}$, we have the estimate
\beq\label{estkr}
|w(\tau)|_{s}^2 \leq C(s, \gamma_{0}, K ) |H(\tau)|_{s+q}^2. 
\eeq
\end{proposition}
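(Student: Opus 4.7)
My plan is to mirror the structure of the proof of Proposition~\ref{resolvant}, splitting the spectral parameter $\tau$ into a high-frequency regime $|\tau|\geq M$ and a bounded regime $|\tau|\leq M$, with $M$ depending on $\gamma_{0}$ and $K$. The essential difference compared to the periodic setting is that $k$ now varies continuously on $(0,K]$ instead of taking values in a discrete lattice, so every bound must be uniform as $k\to 0^{+}$.

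For the high-frequency regime I would run the argument of Lemma~\ref{HF} verbatim: decompose $w=\alpha\varphi_{-1}+w_{0}+w_{\perp}$ according to the spectral decomposition \eqref{spectre} of $L$; extract the conservation law by pairing the resolvent equation with $(L+S(ik))w$ and taking real parts; recover $|\alpha|^{2}+|w_{0}|^{2}$ by testing against $\alpha\varphi_{-1}$ and $w_{0}$ where the factor $|\tau|-C$ is gained on the left; and close the top-order estimates by Young's inequality together with the energy identity using the multiplier $M_{s}$. All $k$-dependent quantities entering the argument, namely the operator norms controlled by \eqref{Jik}, \eqref{JR}, \eqref{JSw} (with $C(k)$ continuous and hence bounded on $(0,K]$), and \eqref{Ms1}--\eqref{Ms2}, are uniform for $k\in(0,K]$. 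This yields $|w(\tau)|_{s}\leq C(s,\gamma_{0},K)\,|H(\tau)|_{s+1}$ uniformly in $k$ for $|\tau|\geq M$.

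For the bounded regime $|\tau|\leq M$ I would apply Lemma~\ref{lemloc} with the compact set $\mathcal{K}=\{\gamma_{0}+i\tau:|\tau|\leq M\}$ and upper bound $K$ on $k$. The non-vanishing hypothesis is verified as follows. Since $\sigma_{0}=\sup_{k\in\overline{\Omega}}\Re\sigma(k)$ is the maximal real part of an unstable mode and we have chosen $\gamma_{0}>\sigma_{0}$, no unstable mode has amplification parameter with real part equal to $\gamma_{0}$; hence $D(\gamma_{0}+i\tau,k)\neq 0$ for every $k\in(0,K]$ and every $\tau\in\mathbb{R}$, and in particular $\tilde D\neq 0$ on $\mathcal{K}\times(0,K]$. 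At $k=0$ the strong one-dimensional stability \eqref{evans1Dt} gives $\tilde D(\gamma_{0}+i\tau,0)\neq 0$. By analyticity of $\tilde D$ on $\{\Re\sigma>0\}\times\mathbb{R}$ and compactness of $\mathcal{K}\times[0,K]$, $\tilde D$ is therefore bounded away from zero there, and Lemma~\ref{lemloc} delivers the desired uniform bound $|w|_{s}\leq C(s,\gamma_{0},K,M)\,|H|_{s+q}$ for some finite loss~$q$.

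The main technical obstacle is already encapsulated in Lemma~\ref{lemloc}, namely the control of the resolvent as $k\to 0^{+}$: in that regime the spectrum of $A_{\infty}(\sigma,k)$ produced by the ODE reduction can touch the imaginary axis, the decay rate $\alpha(k)$ in the exponential dichotomy degenerates like $\rho(k,\mathcal{K})$, and a naive convolution estimate produces a singular factor $1/\alpha(k)$. The point of the compensation bound \eqref{compensation}, together with the splitting $w=u+v$ used in the proof of Lemma~\ref{lemloc}, is that the source term in the equation for $v$ vanishes precisely at rate $\rho(k,\mathcal{K})$ and absorbs this singularity, while $u$ is handled by the one-dimensional bound of Lemma~\ref{lemper}. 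Once this low-$k$ uniformity is in hand, combining the two regimes and taking $q$ to be the maximum of the derivative losses from the two steps completes the proof.
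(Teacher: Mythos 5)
Your proposal follows the same structure as the paper's proof: split into the large-$|\tau|$ regime (handled by Lemma~\ref{HF}, which is indeed uniform in $k$ on bounded sets since all the constants arising from \eqref{Jik}, \eqref{JR}, \eqref{JSw} and the multiplier assumption are controlled uniformly for $|k|\leq K$) and the bounded-$|\tau|$ regime (handled by the low-frequency resolvent bound). Your verification that the Evans functions $D$ and $\tilde D$ do not vanish on the relevant compact set, using $\gamma_0>\sigma_0$ and \eqref{evans1Dt}, is exactly the observation the paper makes.

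One small logical slip: in the bounded regime you invoke Lemma~\ref{lemloc} as the main ingredient, but the hypotheses of that lemma (the existence of a reduction operator $R(\sigma,k)$ and properties i)--iv) of Section~\ref{reshyploc}) are \emph{not} among the standing assumptions of Theorem~\ref{theoloc}; they are only a \emph{criterion} that guarantees the abstract assumption \eqref{resloc} of Section~\ref{hyploc}. The proof of Proposition~\ref{resr} should therefore invoke \eqref{resloc} directly, which is exactly what the paper does. The distinction matters at the abstract level, although in every concrete example of the paper the criterion of Lemma~\ref{lemloc} is what is actually verified, so the conclusion is the same. Apart from this, your argument is correct and matches the paper's route.
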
 
\begin{proof}[Proof of Proposition~\ref{resr}.]
As in the proof of Proposition~\ref{resolvant}, we can split the proof of
\eqref{estkr} into large $|\tau|$ estimates and bounded $|\tau|$ estimates. The
proof of  the large $|\tau|$ estimate  was already proved in  Lemma~\ref{HF}.
As already noticed, there is no difference between the continuous and
discrete cases in $k$ for this estimate.  
To treat the small $|\tau|$ case, we use  the assumptions of Section~\ref{hyploc}.
 By choice of $\gamma_{0}$ and thanks to the assumption \eqref{evans1Dt},
  we can use \eqref{resloc} for $\mathcal{K}= \{\gamma_{0}+ i \tau, \, 
   |\tau |\leq M\}.$
\end{proof}
With (\ref{fed1}) and (\ref{fed2}) at our disposal, we may complete the
instability proof as in the previous section. We choose $T^{\delta}>0$ such that
$$
\frac{e^{T^{\delta}\,\sigma_0}}{[1+T^{\delta}]^{\frac{1}{2m}}}=\frac{\kappa}{\delta},
$$
where $\kappa>0$ is small enough to be fixed.
Again, we write the solution of (\ref{ham2}) in the form $u=Q+u^{ap}+w$ with
$w$ solution of 
$$
\partial_{t}w  = 
{\mathcal J}(\partial_y) ( L_{0}w+\nabla F(Q+u^{ap}+w)-\nabla F(Q+u_{ap})
+\mathcal{S}(\partial_{y}) w )-{\mathcal J}(\partial_y)G
$$
with zero initial data.
Thanks to (\ref{fed2}), we have that
\begin{equation}\label{raul1_bis}
\|{\mathcal J}G(t,\cdot)\|_{s}\leq
C_{M,s}\delta^{M+2}\frac{e^{(M+2)\Re(\sigma_0)  t}}{
(1+t)^{\frac{M+2}{2m}}
},\quad 0\leq t\leq T^{\delta}\,.
\end{equation}
Then thanks to our assumptions, $w$ enjoys the energy estimate
\begin{equation}\label{raul2_bis}
\|w(t)\|_{s}^2 
\leq
\int_{0}^t\big(\|{\mathcal J}G(\tau) \|_{s} \|w(\tau)\|_{s}  + 
\omega(C+\kappa C_{M,s})\|w(\tau)\|_{s}^2 \big) \, d\tau, 
\end{equation}
provided also that $t\leq T^\delta$ and $t$ small. Let us  define $T^*$ by
$$ 
T^*\equiv \sup \{ T\,:\,T\leq T^\delta,\,\, {\rm and}\,\,  \forall\, t \in [0,T], \|w(t)\|_{s} \leq 1 \}.
$$
($T^*$ is well-defined since $w(0)=0$).
Thanks to (\ref{raul1_bis}) and (\ref{raul2_bis}), we obtain that for $t\in [0,T^*[$,
$$
\|w(t)\|_{s}^2 \leq 
\omega(C+\kappa C_{M,s})\int_{0}^{T}\|w(\tau)\|_{s}^2d\tau +
C_{M,s}\delta^{2(M+2)}
\frac{e^{2(M+2)\Re(\sigma_0)  t}}{
(1+t)^{\frac{M+2}{m}}}\,.
$$
We fix an integer $M$ large enough so that $2(M+2)\Re(\sigma_0)-\omega(C)\geq 2$. 
We then choose $\kappa$ small enough so that
$$
2(M+2)\Re(\sigma_0)-1>\omega(C+\kappa C_{M,s})-\omega(C)\,.
$$
Using the inequality
$$
\int_{0}^{t}
\frac{e^{2(M+2)\sigma_0 \tau-\omega(C+\kappa C_{M,s}) \tau}}{(1+\tau)^{\frac{M+2}{2m}}}d\tau
\leq \frac{\tilde{C}\,e^{2(M+2)\sigma_0 t-\omega(C+\kappa C_{M,s}) t}}{(1+t)^{\frac{M+2}{2m}}}\,.
$$ 
a bootstrap argument and the Gronwall lemma, we infer that $w(t)$ is
defined for $t\in [0,T^\delta]$ and that 
$$
\|w(T^\delta,\cdot)\|_{s}\leq C_{M,s}\kappa^{M+2}\,.
$$
In particular
\begin{equation}\label{reste_bis}
\|w(T^\delta,\cdot)\|_{L^2(\rit\times \tit_{L})}\leq C_{M,s}\kappa^{M+2}\,.
\end{equation}
Let $I_0$ be a neighborhood of zero which does not meet $I$. Let us fix $\varphi \in
C^{\infty}_0(\R)$ vanishing on $I_0$ and equal to one on $I$. 
Let us denote by $\Pi $ the map acting on ${\mathcal S}'(\R^2)$, defined via
the Fourier transform as
$$
\widehat{\Pi(u)}(\xi_1,\xi_2)=\varphi(\xi_2)\hat{u}(\xi_1,\xi_2).
$$
The  Fourier multiplier $\Pi$ is bounded on $L^2(\R^2)$ and we notice that the first term of $u^{ap}$ satisfies $\Pi(u^{0})=u^{0}$.  
Therefore, we have that 
\begin{eqnarray*}
\|\Pi(u^{ap}(t,\cdot))\|_{L^2}
& \geq &
c_s\delta \frac{e^{\Re(\sigma_0) t}}{(1+t)^{\frac{1}{2m}}}-\sum_{k=1}^{M}\delta^{k+1}\|\Pi(u^{k})\|_{L^2}
\\
& \geq &
c_s\delta \frac{e^{\Re(\sigma_0) t}}{(1+t)^{\frac{1}{2m}}}
-\sum_{k=1}^{M}C_{k,s}\delta^{k+1}
\frac{e^{(k+1)\Re(\sigma_0) t}}{(1+t)^{\frac{k+1}{2m}}}\,.
\end{eqnarray*}
Therefore for $\kappa$ small enough one has 
\begin{equation}\label{parvo_bis}
\|\Pi(u^{ap}(T^\delta,\cdot))\|_{L^2(\rit\times \tit_{L})}
\geq
\frac{c_s \kappa}{2}\, .
\end{equation}
Finally, since for $w\in \mathcal{F}\subset{\mathcal S}'(\R^2)$, 
 where $\mathcal{F}$  which is defined in statement of Theorem \ref{theoloc}
  is the set of functions (or  tempered distributions) which depends
   only on $x$,  
   we have that $\Pi(w)=0$, by 
using (\ref{reste_bis}), (\ref{parvo_bis}), we can write that for every $w\in \mathcal{F}$,
\begin{eqnarray*}
\|u^{\delta}(T^\delta,\cdot)-w\|_{L^2}
& \geq & 
\|\Pi(u^{\delta}(T^\delta,\cdot)-w)\|_{L^2}
\\
& = &
\|\Pi(u^{\delta}(T^\delta,\cdot)-Q(\cdot))\|_{L^2}
\\
& = &
\|\Pi(u^{ap}(T^\delta,\cdot)+w(T^\delta,\cdot))\|_{L^2}
\\
& \geq & \frac{c_s \kappa}{2}-\|\Pi(w(T^\delta,\cdot))\|_{L^2}
\\
& \geq & 
\frac{c_s \kappa}{2}-\|w(T^\delta,\cdot)\|_{L^2}
\\
& \geq &
\frac{c_s \kappa}{2}-C_{M,s}\kappa^{M+2}\,.
\end{eqnarray*}
A final restriction on $\kappa$ may insure that the right hand-side of the last inequality 
is bounded from below by a fixed positive constant $\eta$.
This completes the proof of Theorem \ref{theoloc}.
\qed

\section{Examples}
In this section we give a number of examples when our general result of
Theorem~\ref{theoper} and Theorem~\ref{theoloc} applies with an unstable mode
generated by Lemma~\ref{eigen}.
\subsection{The generalized KP-I equation}
The $1d$ model is the gKdV equation
\begin{equation}\label{gKdV}
u_{t}=\partial_{x}(-\partial_{x}^{2}-u^p),\quad p=2,3,4,\quad u:\R\rightarrow \R.
\end{equation}
For simplicity, we consider only the case of power nonlinearities but more
general nonlinear interactions may be considered too.
Equation (\ref{gKdV}) has a solution of the form $u(t,x)=Q(x-t)$ with $Q$ 
smooth with exponential decay. We even have an explicit formula for $Q$ namely
\beq
\label{QKP}
Q(x) = \Bigl( \frac{ p+ 1 } { 2 } \Bigr)^{ \frac{1}{p-1} } \,
\Bigl(   \mbox{sech}^2({ \frac{ (p-1) x}{2}})\Bigr)^{\frac{1}{p-1} }.
\eeq
The solution $u(t,x)=Q(x-t)$ describes the displacement of the profile $Q$ from left to the right with speed
one. One also has the solution
\begin{equation}\label{speed}
u_{c}(t,x) =  c^{\frac{1}{p-1} } \,Q(  \sqrt{c}(x- c t )), \quad c>0
\end{equation}
which correspond to a solitary wave with a positive speed $c$.
We  restrict our considerations only to speed one solitary
waves since the case  of arbitrary speeds can be reduced to speed one
 by a change of scale because of \eqref{speed}.

By changing $x$ into $x-t$, we observe  that $Q$ is stationary solution of 
\begin{equation}\label{gKdVbis}
u_{t}=\partial_{x}(-\partial_{x}^{2}u+u-u^p)
\end{equation}
which fits into the framework
  of  section \ref{s1D} with $d=1$, 
  $$ J=\partial_x, \quad L_{0}=-\partial_{x}^{2}+{\rm Id},
\quad  F(u)=-\frac{u^{p+1}}{p+1}.$$
   Obviously, the first assumptions of  section \ref{s1D} are matched.
    Moreover, we have, 
$$
L=-\partial_{x}^{2}+{\rm Id}-pQ^{p-1}{\rm Id},\quad  R = - p Q^{p-1}{\rm Id }\,.
$$
The spectral condition
 \eqref{spectre}  on $L$ is satisfied by Sturm-Liouville theory since $Q'$ has only one zero (see \cite{Ben,BSS}). 

 The transversally perturbed model is the gKPI equation which reads
  in the moving frame 
\begin{equation}\label{gKPI}
u_{t}=\partial_{x}(-\partial_{x}^{2}u+u-u^p+\partial_{x}^{-2}\partial_{y}^{2}u).
\end{equation}
Consequently, we have  $J(ik)= \partial_{x}$,  $\mathcal{S}(\partial_{y})=\partial_{x}^{-2}\partial_{y}^{2}$ and hence 
$S(ik)=-k^2\partial_{x}^{-2}$.

The assumptions of section \ref{sJ} are obviously met.  In particular, since
$$ [R, J] w =  p ( Q^{p-1})_{x} w,$$
 \eqref{JR} is true.
 
 Next, one can  also easily check the assumption of section \ref{sS}.
 Note that 
$ |w|_{S(ik)}^2 \equiv k^2|\partial_{x}^{-1}w|^2,$
  hence, assumption (\ref{JSw}) is  satisfied with $C(k)=|k|$.
 
Let us next check  the assumption (\ref{klarge}).
We have
$$
(Lv, v) + (S(ik)v, v )\geq |v_{x}|^2 +  |v|^2  + k^2 |\partial_{x}^{-1}v|^2 - C |v|^2,
$$
where $C= p\|Q^{p-1}\|_{L^\infty(\R)}$ and hence using the Fourier transform, we find
$$ 
(Lv, v) + (S(ik)v, v )\geq (2\pi)^{-1} 
\int_{\R} \Big(\xi^2 + \frac{k^2}{\xi^2} + 1 - C\Big) |\hat{v}(\xi) |^2 \, d\xi.
$$
and since $ \xi^2 + k^2 /\xi^2 \geq 2C$ for $|k| \geq C$, we get \eqref{klarge}
(more precisely for $|k|\geq C$, $ \xi^2 + k^2 /\xi^2 \geq 2|k|\geq 2C$).

Let us next turn to the assumption on the  eigenvalue problem in the
context of (\ref{gKPI}).
The resolvent equation reads  
\begin{equation}\label{matritza_bis}
\sigma u=\partial_{x}(-\partial_x^{2}+1-pQ^{p-1})u-k^2\partial_{x}^{-1}u+F_{x}.
\end{equation}
To prove the existence of the Evans function and \eqref{resper}, \eqref{resloc},
 we shall use the criterion of section \ref{criteria}.
 Let us define
 $$ R(\sigma, k) = \left\{ \begin{array}{ll} \partial_{x}, \quad \mbox{ if } k\neq 0, \\
   {\rm Id},  \quad  \mbox{ if } k= 0. \end{array}\right.$$
   then we directly  find that
    $ R(\sigma - J( L+ S(ik))=P_{1}(\sigma, k)$ is a differential operator
     of order $4$ for $k \neq 0$ and $3$ for $k=0$.
      Consequently, the assumption \ref{factorisation} is matched
       with an empty second block.
       
 For $k \neq 0$, we have \eqref{edo} with 
$$
A(x,\sigma,k)=
\left( 
\begin{array}{cccc}
0 & 1 & 0 & 0 \\ 
0 & 0 & 1 & 0 \\
0 & 0 & 0 & 1 \\
-k^2-p\partial_x^{2}(Q^{p-1}) & -\sigma-2p\partial_x(Q^{p-1}) & 1-pQ^{p-1} & 0
\end{array} 
\right).
$$
Thus
$$
A_{\infty}(\sigma,k)=
\left( 
\begin{array}{cccc}
0 & 1 & 0 & 0 \\ 
0 & 0 & 1 & 0 \\
0 & 0 & 0 & 1 \\
-k^2 & -\sigma & 1 & 0
\end{array} 
\right).
$$
The eigenvalues  of $A_{\infty}(\sigma,k)$  are the roots of the  polynomial $P$ 
\beq\label{P}
P(\lambda ) = \lambda^4 - \lambda^2 + \sigma \lambda + k^2
\eeq
and hence are not purely imaginary when $\Re\, \sigma >0$, $k \neq0$.
 Moreover, there are two of positive real part and two of negative real part.
For $k=0$, we have 
$$
A(x,\sigma, 0)=
\left( 
\begin{array}{cccc}
0 & 1 & 0  \\ 
0 & 0 & 1   \\
-\sigma-p\partial_x(Q^{p-1}) & 1-pQ^{p-1} & 0
\end{array} 
\right)
$$
and thus
$$
A_{\infty}(\sigma, 0)=
\left( 
\begin{array}{cccc}
0 & 1 & 0 \\ 
0 & 0 & 1  \\
-\sigma & 1 & 0 
\end{array} 
\right).
$$
The characteristic polynomial of $A_{\infty}(\sigma, 0)$ is
$p(\lambda)=-\lambda^3+\lambda+\sigma$ and thus for $\Re(\sigma)>0$ the eigenvalues of $A_{\infty}(\sigma, 0)$ do not meet the
imaginary axis. 

Consequently, the  existence of the Evans function follows from Lemma
  \ref{existevans}.

 Finally, since the KdV solitary wave
is stable (see e.g. \cite{PW}), 
we have $D(\sigma, 0) \neq 0$ when $\Re\, \sigma >0$ and hence the assumption 
 \eqref{evans1D} is met. Consequently, \eqref{resper} follows from \eqref{lemper}

To handle the localized case, we note that  
when $k$  tends to zero,  there is a single root $\lambda=0$ of (\ref{P}) on the imaginary axis
 and hence, there is spectrum of $A_{\infty}(\sigma, 0^+)$ on the imaginary
  axis.
More precisely,  for $k\sim 0$ this root behaves  as
\beq
\label{muKP}
 \mu(\sigma,k) \sim -\frac{k^2}{ \sigma}.
\eeq
Consequently,  there  is only  one of the negative real part roots of (\ref{P}) which goes to
zero. Since $\mu(\sigma,k)$ is analytic, we can use the Gap lemma \cite{GZ}, \cite{KS}
to get the continuation of the Evans function. Moreover, for $k$ close to zero, we can write the Evans function as
$$ 
\tilde{D}(\sigma, k) = 
\left| \begin{array}{cccc}
\varphi_{1}^-(0,\sigma,k) & \varphi_{2}^-(0,\sigma,k) & \varphi_{1}^+(0,\sigma,k) & \varphi_{2}^+(0,\sigma,k) \\ 
\partial_{x}\varphi_{1}^-(0,\sigma,k) & \partial_{x}\varphi_{2}^-(0,\sigma,k) & \partial_{x}\varphi_{1}^+(0,\sigma,k) & \partial_{x}\varphi_{2}^+(0,\sigma,k) \\
\partial_{x}^2 \varphi_{1}^-(0,\sigma,k) &
\partial_{x}^2\varphi_{2}^-(0,\sigma,k) &
\partial_{x}^2\varphi_{1}^+(0,\sigma,k) &
\partial_{x}^2\varphi_{2}^+(0,\sigma,k) \\
\partial_{x}^3 \varphi_{1}^-(0,\sigma,k) &
\partial_{x}^3\varphi_{2}^-(0,\sigma,k) &
\partial_{x}^3\varphi_{1}^+(0,\sigma,k) &
\partial_{x}^3\varphi_{2}^+(0,\sigma,k)
\end{array} \right|,
$$
where $\varphi_{i}^\pm(x,\sigma,k)$, $i=1,2$ decay when $x$ goes to $\pm \infty$ for $k \neq
0$. When $k=0$,  $\varphi_{i}^-$, $i=1,2$ and 
$\varphi_{1}^+$ keep this  property as $\varphi_{2}^+(x,\sigma, 0)=c+{\mathcal
O}(e^{-\alpha|x|})$, where $c\neq 0$ and $\alpha>0$ are some fixed constants.
Note that $\varphi_{i}^-(x,\sigma, 0)$, $i=1,2$ and $\varphi_{1}^+(x,\sigma, 0)$ actually solve
\beq\label{kdv1}
\sigma u =\partial_{x}(-\partial_x^{2}+1-pQ^{p-1})u
\eeq
which is the linearized KdV equation about the solitary wave whereas after
integration, we get that $\varphi_{2}^+(x,\sigma, 0)$ solves
\beq\label{phi1}
\sigma u =\partial_{x}(-\partial_x^{2}+1-pQ^{p-1})u+c\sigma,
\eeq
where the source term $c\sigma$ is identified by looking at the value at
$\infty$ of
$$
\sigma \varphi_{2}^+(x,\sigma,
0)-\partial_{x}(-\partial_x^{2}+1-pQ^{p-1})\varphi_{2}^+(x,\sigma, 0).
$$
Consequently, using  \eqref{kdv1}, \eqref{phi1} we can write the forth derivatives of
$\varphi_{i}^\pm(0,\sigma,0)$, $i=1,2$ as the same linear combinations of
lower order derivatives with an additional term $c\sigma$ for $\varphi_{2}^+(0,\sigma, 0)$.
Therefore, we can perform an operation
on the line of the determinant which defines the Evans function, to get that
$$ 
\tilde{D}(\sigma, 0) = \left| \begin{array}{cccc}
\varphi_{1}^-(0,\sigma,0) & \varphi_{2}^-(0,\sigma,0) & \varphi_{1}^+(0,\sigma,0) & \varphi_{2}^+(0,\sigma,0) \\ 
\partial_{x}\varphi_{1}^-(0,\sigma,0) & \partial_{x}\varphi_{2}^-(0,\sigma,0) & \partial_{x}\varphi_{1}^+(0,\sigma,0) & \partial_{x}\varphi_{2}^+(0,\sigma,0) \\
\partial_{x}^2 \varphi_{1}^-(0,\sigma,0) & \partial_{x}^2\varphi_{2}^-(0,\sigma,0) & \partial_{x}^2\varphi_{1}^+(0,\sigma,0) & \partial_{x}^2\varphi_{2}^+(0,\sigma,0) \\
0 & 0 & 0 & c\sigma
\end{array} \right|.
$$
Consequently, we get that $|\tilde{D}(\sigma,0)|= |c\sigma D(\sigma, 0)|$ where
$D(\sigma, 0)$ is the Evans function 
associated to the linearized KdV equation about the solitary wave.  Again, 
since the KdV solitary wave
is stable (see e.g. \cite{PW}),  we also have $\tilde{D}(\sigma, 0)$ does
 not vanish for $\mbox{Re } \sigma >0$ and hence, \eqref{evans1Dt}
  is verified. 
  Finally, \eqref{compensation} is also met in view of \eqref{muKP} since
  $$ R(\sigma, k) J(ik) S(ik)=  \partial_{xx} (- k^2 \partial_{x}^{-2}) = -k^2.$$
 Therefore, \eqref{resloc}
  follows from Lemma \ref{lemloc}.

 The assumptions of section~\ref{Ms} on the existence of a multiplier $M_{s}$  are also matched. 
Indeed, we can use the criterion given by Lemma \ref{lemmult}. Let us   set
$$ 
K_s w = r_{s}(x) \,w
$$
where $r_{s}$ is a  smooth and real valued  function.
 A few computation give
\begin{eqnarray*}
E_{s} u &= & \frac{1}{2} \partial_{x} \big( ( p  Q^{p-1})_{x} \partial_{x} u \big)
  -\frac{s}{2} \big( - (p Q^{p-1})_{x} \partial_{xx} u 
   + \partial_{xx} ( -  (p Q^{p-1})_{x} u )  \big)
    -\frac{1}{2}  [- \partial_{x}^3 + \partial_{x}, r_{s}] u \\
    & = &  \Big(  \big( \frac{1}{2}  + s \big)  (p Q^{p-1})_{x} +\frac{3}{2} (r_{s})_{x} \Big)
     \partial_{xx} u + \tilde{E}_{s}u
\end{eqnarray*}
where $\tilde{E}_{s}$ is a first order differential operator.
 Consequently, with the choice
 $$ r_{s} = - \big( \frac{1+2s}{3} \big)  p Q^{p-1},$$
 the properties \eqref{Ms1}, \eqref{Ms2} are verified. Notice that a similar
 argument can be performed each time we deal with a scalar equation,
 i.e. $d=1$ in our general framework.

   The ``nonlinear'' assumptions in the context of (\ref{gKPI}) are also  met.
In the context of (\ref{gKPI}), \eqref{ham222} becomes
\begin{equation}\label{four}
\partial_{t}u=-u_{xxx}+u_{x}+\partial_{x}^{-1}u_{yy}-\partial_{x}
\big[(u^a+u)^p-(u^a)^p\big]+\partial_x G,\quad u(0)=0.
\end{equation}
To check \eqref{tame}, we have to estimate
$$ 
\int \partial^\alpha  \partial_{x} \big( (w+v)^{p-1}\, v\big)
 \, \partial^\alpha v \, dxdy$$
 with $\partial^\alpha= \partial_{x}^{\alpha_{1}} \partial_{y}^{\alpha_{2}}, $
$  |\alpha_{1}| + |\alpha_{2}| \leq  s$. 
Therefore we need to study
$$
\int \partial^\alpha  \partial_{x} \big( w^q\, v^{r}\big)\, 
\partial^\alpha v \, dxdy,\quad 0\leq q\leq p-1,\quad q+r=p\,,
$$
where $w$ may only be putted in $L^\infty$ (or some of its derivatives).
If at least one of the derivatives of $\partial^\alpha  \partial_{x}$ acts on
$w^q$ then we can use the Gagliardo-Nirenberg-Moser estimates to get the needed bound.
Therefore it remains to study
$$
\int 
w^q\partial^\alpha  \partial_{x} \big(v^{r}\big)\, 
\partial^\alpha v \, dxdy=
r\,\int 
w^q\partial^\alpha   \big(v^{r-1}\partial_{x}v\big)\, 
\partial^\alpha v \, dxdy
$$
We write 
\begin{eqnarray*}
\int w^q\partial^\alpha   \big(v^{r-1}\partial_{x}v\big) \, \partial^\alpha v 
& = & 
\int w^q\, v^{r-1}\partial_{x}\partial^{\alpha}v \, \partial^\alpha v 
+
\int w^q [\partial^\alpha, v^{r-1}]\partial_{x}v\, \partial^\alpha v 
\\
& = & 
-\frac{1}{2}  \int \partial_{x}( w^q v^{r-1}  ) | \partial^\alpha v |^2\, dx dy 
+ 
\int w^q [\partial^\alpha, v^{r-1}]\partial_{x}v\, \partial^\alpha v 
\end{eqnarray*}
and hence \eqref{tame} follows by the Sobolev embedding and the classical tame commutator estimate
$$ 
\| [\partial^\beta , f]  g \| \leq C_{\beta} \Big( \|f\|_{k} \, \| g \|_{L^\infty}
 + \| \nabla f \|_{L^\infty} \, \| g \|_{k-1} \Big), \quad 1 \leq |\beta | \leq k.
$$ 

As already used in the general framework the estimate \eqref{tame} 
and the fact that $JL_{0}$ and $J\mathcal{S}$ are skew-symmetric  
allow to get and $\mathbb{H}^s$ energy estimate for \eqref{four}.


To get the well-posedness of \eqref{four}, 
the procedure is very classical and there are several
possibilities to achieve this conclusion. One possibility is to consider a
regularized version of (\ref{four}), for example
\begin{equation}\label{four_epsilon}
\partial_{t}u^{\varepsilon}+\varepsilon \Delta^2 \partial_{t}u^{\varepsilon}
=-u^{\varepsilon}_{xxx}+u^{\varepsilon}_{x}+\partial_{x}^{-1}u^{\varepsilon}_{yy}-\partial_{x}
\big[(u^a+u^{\varepsilon})^p-(u^a)^p\big]+\partial_x G,\quad u^{\varepsilon}(0)=0,
\end{equation}
where $\varepsilon>0$ and $\Delta=\partial_x^2+\partial_y^2$.
Thanks to the regularization (which is by no means canonical), we may solve
(\ref{four_epsilon}) for a finite time, independent of $\varepsilon>0$, by means of
the Picard iteration applied to the associated integral equation. 
We next wish to pass to the limit $\varepsilon\rightarrow 0^+$ in
$u^{\varepsilon}$. For that purpose, we need to establish an apriori bound on
$\|u^{\varepsilon}(t,\cdot)\|_{H^s}$ independent of $\varepsilon$. 
 These bounds follow from the fact that the well-chosen perturbation
  enjoy the same  $H^s$ estimate as the one formally obtained for
   \eqref{four}.
Then we pass to the limit
$\varepsilon\rightarrow 0^+$ thanks to a compactness argument.
This establishes the local well-posedness of (\ref{four}).

Finally, let us notice that the sufficient condition of Lemma~\ref{eigen} for the existence of an unstable
mode applies. Indeed, we have
$$M_{k}=  \partial_{x}L \partial_{x}  - k^2 Id, $$
 therefore,  it suffices to show that
the self adjoint operator $\partial_{x}L\partial_{x}$ on $L^2(\R)$, with domain $H^4(\R)$,
has a unique positive eigenvalue. Note that,  thanks to Weyl's theorem the essential
spectrum of $\partial_{x}L\partial_{x}$ is $]-\infty,0]$. Therefore on
$[0,\infty]$ the spectrum of $\partial_{x}L\partial_{x}$ can only contains eigenvalues
 of finite multiplicity and hence for $k>0$, $M_{k}$ is Fredholm with zero index.
Since $L$ has a unique negative eigenvalue and the remainder of its spectrum 
is included in $[0,\infty]$,               
we obtain that, by analyzing the corresponding quadratic forms, the operator
$\partial_{x}L\partial_{x}$ cannot have more than one positive direction,
i.e. $u$ such that $(u,\partial_{x}L\partial_{x}u)>0$.
Let us finally show that a positive direction indeed exists.
Let us denote by $u_{-1}$ the $L^2$ normalized eigenvector of $L$ with corresponding to the negative
eigenvalue $-\mu$. Let $\varphi_{n}\in H^{10}(\R)$ be a sequence such that
$\partial_x \varphi_n$ converges to $u_{-1}$ in $H^{10}(\R)$.
Then $(\partial_{x}L\partial_{x}\varphi_n,\varphi_n)$ converges to
$\mu>0$. Therefore there exist a   positive direction of
$\partial_{x}L\partial_{x}$ which shows that $\partial_{x}L\partial_{x}$ has a
positive eigenvalue (recall that on $\R^+$ the spectrum of $\partial_{x}L\partial_{x}$ can only contains eigenvalues).

 Finally,  for $ k_{0}^2$ the unique positive eigenvalue of $\partial_{x}L \partial_{x}$,
 we have
 $$ \Big([{d\over dk} M_{k} ]_{k=k_{0}} \varphi, \varphi \Big) = -2k_{0} \neq 0$$
  and hence \eqref{hyplem} is verifed. Thus Lemma~\ref{eigen} applies in the context of (\ref{gKPI}).

Therefore we can apply our general theory and obtain that 
$Q$ is (orbitally) unstable as a solution of (\ref{gKPI}) (posed on
$\R\times\T_{a}$ with a suitable $a$ or $\R^2$)  
thanks to our general results. We have the following statement.
\begin{theoreme}
For every $s\geq 0$, there exists $\eta>0$ such that for every $\delta>0$ there
exists $u_0^\delta$ and a time $T^\delta\sim |\log\delta|$ such
that 
$
\|u_0^\delta-Q\|_{H^s(\R^2)}<\delta
$
and the generalized KP-I equation (\ref{gKPI}) with data
$u_0^\delta$ is locally well-posed on $[0,T^\delta]$. Moreover, if we denote by
$u^\delta(t)$, $t\in [0,T^\delta]$, the corresponding solution, then
 $u^\delta(t) - Q \in H^s(\mathbb{R}^2)$ for every $t \in [0, T^\delta]$ and 
$$
\inf_{v\in {\mathcal F}}\|u^\delta(T^\delta)-v\|_{L^2(\R^2)}\geq \eta,
$$
where $\mathcal{F}$ is the space of $L^2(\R)$ functions independent of $y$.
\end{theoreme}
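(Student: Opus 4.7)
The plan is to obtain this theorem as a direct application of Theorem~\ref{theoloc} to the gKP-I equation (\ref{gKPI}), once every hypothesis of the abstract framework has been verified in this specific context. The identifications are $d=1$, $J=\partial_{x}$, $L_{0}=-\partial_{x}^{2}+\mathrm{Id}$, $F(u)=-u^{p+1}/(p+1)$, $J(ik)=\partial_{x}$ and $S(ik)=-k^{2}\partial_{x}^{-2}$, with $Q$ given by the explicit formula (\ref{QKP}). The spectral assumption on $L$ comes from Sturm--Liouville theory applied to $L=-\partial_{x}^{2}+1-pQ^{p-1}$; the structural assumptions on $J(ik), S(ik)$ and the compatibility inequality (\ref{klarge}) are checked by straightforward Fourier-side computations (using $\xi^{2}+k^{2}/\xi^{2}\geq 2|k|$). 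The tame estimate and local well-posedness for (\ref{four}) are verified via the commutator estimate combined with a standard $\varepsilon\Delta^{2}$-regularization argument.

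Next I would organize the spectral / resolvent verifications around the criteria of Sections \ref{criteria} and \ref{multip}. Setting $R(\sigma,k)=\partial_{x}$ for $k\neq 0$ and $R(\sigma,0)=\mathrm{Id}$, the reduction of Section~\ref{reduc} holds with an empty second block and produces the fourth-order polynomial $P(\lambda)=\lambda^{4}-\lambda^{2}+\sigma\lambda+k^{2}$ as the characteristic polynomial of $A_{\infty}(\sigma,k)$; consistent splitting follows since its roots avoid the imaginary axis when $\Re\sigma>0$ and $k\neq 0$. Lemma~\ref{existevans} then produces the Evans function $D$ and Lemma~\ref{lemper} delivers (\ref{resper}), with one-dimensional stability $D(\sigma,0)\neq 0$ coming from the classical stability of the KdV solitary wave. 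The multiplier of Section~\ref{Ms} is constructed through Lemma~\ref{lemmult} with the scalar choice $K_{s}=-\tfrac{1+2s}{3}pQ^{p-1}$, which I would verify annihilates the leading-order symbol of $E_{s}$.

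The main obstacle will be the localized-frequency resolvent estimate in the regime $k\to 0$, where the symbol $-k^{2}/\xi^{2}$ of $S(ik)$ degenerates and a root $\mu(\sigma,k)\sim -k^{2}/\sigma$ of $P(\lambda)$ crosses the imaginary axis. I would handle this exactly as done above: establish the analytic continuation $\tilde{D}(\sigma,k)$ via the Gap Lemma, show by a determinant manipulation that $|\tilde{D}(\sigma,0)|=|c\sigma D_{\mathrm{KdV}}(\sigma,0)|$ so that $\tilde{D}(\sigma,0)\neq 0$ for $\Re\sigma>0$ (strong 1D stability (\ref{evans1Dt})), and check the key compensation (\ref{compensation}) via the identity $R(\sigma,k)J(ik)S(ik)=-k^{2}$, which vanishes at the correct rate $\rho(k,K)\sim k^{2}$ dictated by $\mu(\sigma,k)$. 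This is the one genuinely delicate step; once it is in place, Lemma~\ref{lemloc} yields the localized resolvent bound (\ref{resloc}).

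Finally, I would invoke Lemma~\ref{eigen} to produce the unstable eigenmode required to launch Theorem~\ref{theoloc}. Here $M_{k}=\partial_{x}L\partial_{x}-k^{2}\mathrm{Id}$, and I would argue: the essential spectrum of $\partial_{x}L\partial_{x}$ is $(-\infty,0]$, so on $(0,+\infty)$ it has only isolated eigenvalues of finite multiplicity and $M_{k}$ is Fredholm of index zero; the quadratic form $(\partial_{x}L\partial_{x}u,u)=(L\partial_{x}u,\partial_{x}u)$ can be positive in at most one direction (since $L$ has exactly one negative direction) and is positive on a sequence $\varphi_{n}$ chosen so that $\partial_{x}\varphi_{n}$ approximates the negative eigenvector $u_{-1}$ of $L$, giving existence of a unique positive eigenvalue $k_{0}^{2}>0$; the nondegeneracy condition (\ref{hyplem}) reduces to $-2k_{0}\neq 0$. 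All assumptions of Theorem~\ref{theoloc} are then in force, which gives $u_{0}^{\delta}$, $T^{\delta}\sim|\log\delta|$, and $\eta>0$ with the required orbital instability bound, completing the proof.
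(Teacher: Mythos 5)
Your proposal is correct and follows essentially the same route as the paper: verifying, one by one, the structural hypotheses of Theorem~\ref{theoloc} for the gKP-I operators, using $R(\sigma,k)=\partial_x$ (resp.\ $\mathrm{Id}$ for $k=0$) to reduce the resolvent equation to the fourth-order ODE with characteristic polynomial $\lambda^4-\lambda^2+\sigma\lambda+k^2$, invoking the Gap Lemma and the relation $|\tilde D(\sigma,0)|=|c\sigma D(\sigma,0)|$ together with the compensation $R(\sigma,k)J(ik)S(ik)=-k^2 \sim \rho(k,K)$ in the localized regime, constructing $M_s$ via Lemma~\ref{lemmult} with $K_s=-\tfrac{1+2s}{3}pQ^{p-1}$, and producing the unstable mode from Lemma~\ref{eigen} through the analysis of $\partial_x L\partial_x$. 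This is precisely the verification carried out in Section~8.1 of the paper.
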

A similar statement may be done for periodic in $y$ solutions with a suitable period
depending on the transverse frequency of the unstable mode (see Theorem~\ref{theoper} above).

Let us recall (see \cite{Liu,Saut}) that for $p=3,4$ the generalized KP-I equation, posed on $\R^2$
has local smooth solutions blowing up in finite time, i.e. another (stronger)
type of instability exists in these cases. This is in sharp contrast with the
case $p=2$, i.e. the ``usual'' KP-I equation when global smooth solutions
exist both in the case of data periodic in $y$ (see \cite{IK}) or localised
with respect to $Q$ (or zero), see \cite{MST}.

\subsection{The nonlinear Schr\"odinger equation}
The $1d$ model is 
$$
(i\partial_{t}+\partial_{x}^{2})u+|u|^{2}u=0,\quad u:\R\rightarrow \C.
$$
This equation has a solitary wave  solution of the form $u(t,x)=e^{it}Q(x)$ with $Q$  smooth
with exponential decay. 
More precisely
$
Q(x) = \sqrt{2}(\mbox{ch}(x))^{-1}.
$
Then after  changing $u$ in $e^{it} u$,  $Q$ becomes a  stationary solution of
\begin{equation}\label{nls}
(i\partial_{t}+\partial_{x}^{2})u-u+|u|^{2}u=0,\quad u:\R\rightarrow \C.
\end{equation}
By writing $u=u_1+i u_2$ with real valued $u_1,u_2$, we obtain that $U\equiv
(u_1,u_2)^t$ solves the equation
\begin{equation}\label{nls_pak}
\partial_t U=
\left( 
\begin{array}{cc} 0 & 1 \\ 
-1   &  0
\end{array}
\right)
\Big(
(-\partial_x^2+1)U+\nabla F(U)
\Big),\quad F(U)=-\frac{1}{4}(u_1^2+u_2^2)^2
\end{equation}
which fits in our framework with 
$$
d=2,\quad 
J=\left( 
\begin{array}{cc} 0 & 1 \\ 
-1   &  0
\end{array}
\right),\quad
L_0=-\partial_x^2+{\rm Id}\,.
$$
The solution $Q$ of (\ref{nls}) is orbitally stable (see \cite{CL}).
This implies the orbital stability of $(Q,0)^t$ as a solution of (\ref{nls_pak}).
The operator $L$ in the context of (\ref{nls_pak}) is given by
$$
L=\left( 
\begin{array}{cc} -\partial_x^2+{\rm Id}-3Q^2 & 0 \\ 
0   &  -\partial_x^2+{\rm Id}-Q^2
\end{array}
\right).
$$
The spectral condition \eqref{spectral} on $L$ is satisfied since $-\partial_x^2+{\rm Id}-3Q^2$
has exactly two simple eigenvalues $-3$ and $0$ with corresponding
eigenvectors $Q^2$ and $Q'$ and continuous spectrum $[1,\infty[$ 
while $-\partial_x^2+{\rm Id}-Q^2$ has one simple eigenvalue $0$ with
corresponding eigenfunction $Q$ and  continuous spectrum $[1,\infty[$ 
(see e.g. \cite{T,W}). 

The transversely perturbed model is the 2D NLS equation that we can write
\begin{equation}\label{ven}
\partial_t U=
\left( 
\begin{array}{cc} 0 & 1 \\ 
-1   &  0
\end{array}
\right)
\Big(
(-\partial_x^2+1)U+\nabla F(U)-\partial_{y}^2U
\Big),\quad F(U)=-\frac{1}{4}(u_1^2+u_2^2)^2,
\end{equation}
i.e. $\mathcal{S}(\partial_{y})=-\partial_{y}^{2}$ and $S(ik)=k^2{\rm Id}$.
The assumptions of sections \ref{sJ}, \ref{sS} are easy to check. 
Since $Q$ is bounded,  assumption (\ref{klarge}) is also  trivially satisfied.

    Let us next turn to the assumption on the key eigenvalue problem in the
context of (\ref{ven}).
 Again, we shall use the criteria of section \ref{criteria}.
  This is very simple in this case, since
   $ \sigma - J(L+ S(ik))$ is already a differential operator.
    Consequently, we can take  $R(\sigma, k)= {\rm Id}.$
If we introduce $V=(u_{1}, u_{2}, \partial_{x}u_{1}, \partial_{x}u_{2})^t \in \mathbb{C}^4$,
$\mathbb{F}=(0, 0, F_2, F_1)^t$ we can  rewrite the resolvent equation as
$
V_{x} = A(x,\sigma,k) V + \mathbb{F},
$
where for all $k$,
$$
A(x,\sigma,k)=
\left( 
\begin{array}{cccc}
0 & 0 & 1 & 0 \\ 
0 & 0 & 0 & 1 \\
k^2+1-3Q^2 & \sigma & 0 & 0 \\
-\sigma & k^2+1-Q^2 & 0 & 0
\end{array} 
\right).
$$
Thus
$$
A_{\infty}(\sigma,k)=
\left( 
\begin{array}{cccc}
0 & 0 & 1 & 0 \\ 
0 & 0 & 0 & 1 \\
k^2+1 & \sigma & 0 & 0 \\
-\sigma & k^2+1 & 0 & 0
\end{array} 
\right)
$$
and we see that $A$ and $A_{\infty}$ are analytic in $(\sigma, k)$.
We then define $D(\sigma,k)$ as the Wronskian associated to
$A(x,\sigma,k)$. Thus $\tilde{D}(\sigma,0)=D(\sigma,0)$ in the case of the NLS.
The eigenvalues  of $A_{\infty}(\sigma,k)$  are the roots of the  polynomial $P$ 
\beq\label{Pbis}
P(\lambda ) = (\lambda^2-k^2-1)^2+\sigma^2.
\eeq
Therefore, in the context of (\ref{ven}), for {\it every} $k\in\R$ the spectrum of $A_{\infty}(\sigma,k)$ does
not meet the imaginary axis.  
Thus the assumption \eqref{compensation}
is obviously satisfied.
  Moreover,  since $\tilde{D}(\sigma, 0)= D(\sigma, 0)$, 
   \eqref{evans1D} and \eqref{evans1Dt} are met  because of
    the 1D stability of the solitary wave.

Since $J$ is a  zero order operator and $L_{0}$ has
 the required form, we can use 
 Corollary \ref{cormult} to get the existence of
   a multiplier.

The nonlinear assumptions in the context of (\ref{nls}) is satisfied thanks to
 the standard well-posedness argument for the 2D NLS equation 

Moreover, since here $J$ is of order zero, the estimate \eqref{tame} follows
 by the standard Gagliardo-Nirenberg-Moser inequalities. 

Finally, 
the sufficient condition given by Lemma \ref{eigen} for the existence of an unstable mode applies. Indeed, as for the KP equation, we have
$$M_{k}= JLJ - k^2 Id$$
 since
$$
JLJ=-\left( 
\begin{array}{cc} -\partial_x^2+{\rm Id}-Q^2 & 0 \\ 
0   &  -\partial_x^2+{\rm Id}-3Q^2
\end{array}
\right)
$$
which have a unique positive eigenvalue. The non-degeneracy condition
 \eqref{hyplem} is  also obviously verified.

Therefore, our general theory applies and we can state the following results.
\begin{theoreme}
For every $s\geq 0$, there exists $\eta>0$ such that for every $\delta>0$ there
exists $u_0^\delta$ and a time $T^\delta\sim |\log\delta|$ such
that 
$
\|u_0^\delta-Q\|_{H^s(\R^2)}<\delta
$
and the two dimensional NLS equation
$$
(i\partial_{t}+\partial_{x}^{2}+\partial_{y}^2)u+|u|^{2}u=0,\quad u:\R^2\rightarrow \C
$$
with data $u_0^\delta$ is locally well-posed on $[0,T^\delta]$.  If we denote by
$u^\delta(t)$, $t\in [0,T^\delta]$,  the corresponding solution,  then
 we have $u^\delta(t) - Q
 \in H^s(\mathbb{R}^2)$, $\forall t \in [0, T^\delta ]$  
and
$$
\inf_{v\in {\mathcal F}}\|u^\delta(T^\delta)-v\|_{L^2(\R^2)}\geq \eta,
$$
where $\mathcal{F}$ is the space of $L^2(\R)$ functions independent of $y$.
\end{theoreme}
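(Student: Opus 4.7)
The plan is to deduce this statement as a direct consequence of the abstract nonlinear transverse localized instability result (Theorem \ref{theoloc}) applied to the system \eqref{ven}, with the existence of an unstable eigenmode supplied by Lemma \ref{eigen}. All the ingredients have been collected in the paragraphs preceding the statement; it remains only to recognize that each hypothesis of Theorem \ref{theoloc} is satisfied in the 2D NLS framework.

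More precisely, I would proceed in two steps. First, verify the structural hypotheses of Sections \ref{s1D}--\ref{asnon} for \eqref{ven}: the operators $J$ and $L_0$ of \eqref{nls_pak} are skew-symmetric and symmetric Fourier multipliers of the right orders, and the coercivity \eqref{coerc} holds trivially; the spectrum of $L$ has the required form \eqref{spectre} by the classical analysis of $-\partial_x^2+\mathrm{Id}-3Q^2$ and $-\partial_x^2+\mathrm{Id}-Q^2$; for $\mathcal{S}(\partial_y)=-\partial_y^2$ the assumptions of Sections \ref{sJ}, \ref{sS} are immediate and \eqref{klarge} follows because $Q$ is bounded; the Evans function exists and the reduction to an ODE is immediate with $R(\sigma,k)=\mathrm{Id}$, while the eigenvalues of $A_\infty$ given by the roots of \eqref{Pbis} stay off the imaginary axis for \emph{every} $k\in\mathbb{R}$ whenever $\Re\sigma>0$, so the assumption \eqref{compensation} is trivial (no vanishing real part at $k=0$), and $\tilde{D}(\sigma,0)=D(\sigma,0)\neq 0$ by 1D orbital stability of $Q$; the existence of multipliers $M_s$ follows from Corollary \ref{cormult} since $J$ is bounded on $L^2$ and $L_0=-\partial_x^2+\mathrm{Id}$; the nonlinear tame estimate \eqref{tame} and local well-posedness in $\mathbb{H}^s$ are standard for the cubic 2D NLS perturbed around $Q$, obtained by a commutator/Moser argument.

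Second, produce an unstable eigenmode via Lemma \ref{eigen}. For the NLS the operator $M_k=JLJ-k^2\mathrm{Id}$ is a diagonal Schr\"odinger-type operator (the paragraph above exhibits $JLJ$ explicitly); $JLJ$ has exactly one simple positive eigenvalue by the spectral structure of its two diagonal blocks, so for the unique $k_0>0$ with $k_0^2$ equal to that eigenvalue, zero is a simple eigenvalue of $M_{k_0}$ with smooth, real-valued eigenvector $\varphi$, and $M_{k_0}$ is Fredholm of index zero. The non-degeneracy \eqref{hyplem} is checked by differentiating: $\frac{d}{dk}M_k=-2k\,\mathrm{Id}$, so $([\tfrac{d}{dk}M_k]_{k_0}\varphi,\varphi)=-2k_0\|\varphi\|^2\neq 0$. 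Since $J(ik)u$ and $S(ik)u=k^2u$ are real-valued whenever $u$ is, Lemma \ref{eigen} produces an unstable eigenmode with transverse frequency $k\neq 0$ close to $k_0$ and $\sigma>0$.

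With all hypotheses of Theorem \ref{theoloc} established, the conclusion of the theorem is exactly the abstract conclusion of Theorem \ref{theoloc} transcribed for \eqref{ven}, which is equivalent after the change of variables $u\mapsto e^{it}u$ to the statement about the original 2D cubic NLS on $\mathbb{R}^2$. The only step that required any thought rather than pure bookkeeping is the application of Lemma \ref{eigen} --- in particular checking that $JLJ$ has exactly one positive eigenvalue --- but this is immediate from the explicit form of the blocks and the classical spectral description of the scalar Schr\"odinger operators $-\partial_x^2+\mathrm{Id}-jQ^2$ ($j=1,3$). I expect no genuine obstacle beyond carefully organizing the verification; the work has essentially been done in the preceding paragraphs of the section.
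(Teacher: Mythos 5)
Your proposal is correct and takes essentially the same route as the paper's own treatment in Section 8.2: verify the structural hypotheses for the real-valued first-order system \eqref{nls_pak}--\eqref{ven}, invoke Lemma~\ref{eigen} using the explicit diagonal form $M_k=JLJ-k^2\mathrm{Id}$ and the fact that $JLJ=-\mathrm{diag}(-\partial_x^2+\mathrm{Id}-Q^2,\,-\partial_x^2+\mathrm{Id}-3Q^2)$ has a unique simple positive eigenvalue, and then cite Theorem~\ref{theoloc}. The only difference is cosmetic: you spell out the Fredholm-index-zero check and the non-degeneracy computation $([\tfrac{d}{dk}M_k]_{k_0}\varphi,\varphi)=-2k_0\|\varphi\|^2$, which the paper dismisses as "obviously verified".
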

A similar statement may be done for periodic in $y$ solutions with a suitable period
depending on the transverse frequency of the unstable mode (see Theorem~\ref{theoper} above).

\subsection{The  Boussinesq equation}
Consider the $1d$  Boussinesq equation 
\begin{equation}\label{bous1d}
u_{tt}+(u_{xx}+u^2-u)_{xx}=0,
\end{equation}
This  equation has a traveling wave solution (see \cite{BS}) of the form
$$
 u(t,x) = q(x-ct) \equiv q\in H^{\infty}(\R;\R^2),\quad |c|<1, \, c \neq 0.
$$
In addition $q$ has an exponential decay at infinity.
Note that we have
 $$q(x)= (1 - c^2 )  \,Q^{KdV}( \sqrt{  1- c^2  } \, x)$$
 where $Q^{KdV}$ is the solitary wave with unit speed of the KdV
  equation given by \eqref{QKP} (for $p=2$).
Moreover for $|c|\in ]1/2,1[$ this traveling wave is orbitally stable (see \cite{BS}).

At first, we shall  
 rewrite  (\ref{bous1d})  as a first order equation.
  Let us define 
  $$ Bu = -u_{xx}+ u$$
  and $B^\alpha$ as the Fourier multiplier with symbol
   $ (|\xi|^2 + 1)^\alpha$. Note that $B^\alpha$ is a symmetric operator.
    By using $B$, we rewrite
    \eqref{bous1d} as
    $$ u_{t}= \partial_{x} B^{\frac{1}{2} } v, \quad
     v_{t}=  \partial_{x} B^{ - \frac{1}{2} }\Big(  Bu  - u^2 \Big).$$
  Changing $x$ into $x-ct$, we get
  \beq
  \label{bous1dS} u_{t}=\partial_{x}B^{- { 1 \over 2 } } \Big( Bv + c B^{ 1 \over 2 } u \Big)
  , \quad v_{t}=  \partial_{x} B^{ - { 1 \over2} }\Big(  Bu  + c B^{ 1 \over 2} v - u^2 \Big).
  \eeq
  With this change of frame, $Q(x)= (q(x), -cB^{ - { 1 \over 2 } } q(x))$
   is a stationary solution of \eqref{bous1dS}. 
    By setting $U= (u, v)^t$, 
 $$ J =  \left( \begin{array}{cc} 0 & \partial_{x} B^{- { 1 \over 2} } \\
  \partial_{x} B^{ -{ 1 \over 2 } } & 0  \end{array} \right), \quad
    L_{0}= \left( \begin{array}{cc}  B & c B^{ 1 \over 2 } \\
    c B^{ 1 \over 2 } & B \end{array} \right), \quad
     F(U) =\left( \begin{array}{ll} -u^{3}/ 3  \\ 0 \end{array} \right)
     $$  
we can write \eqref{bous1dS} under the form \eqref{ham1pak}.
 We easily check that the assumptions of
  section \ref{s1D} are matched. Note that by Bessel identity, we have
  $$ (L_{0} U, U) =(2\pi)^{-1} \int_{\R} \Big( ( 1+ |\xi|^2) |\hat{u}(\xi)|^2 + 
    ( 1+ |\xi|^2) |\hat{v}(\xi)|^2 + 2c \, (\sqrt{ 1 + | \xi |^2 }\, \hat{u}(\xi)\, \overline{\hat{v}(\xi)}\Big)\,
     d \xi$$
      and hence, \eqref{coerc} is verified for $c<1$. Moreover, an important remark
       is that  $J$ is here a bounded operator, in contrast with the
       formulation used by \cite{BS}.
       
  Next, let us check \eqref{spectre}. The operator $L$ is defined by $L=L_0+R$, where 
  $$
  R = 
 \left( \begin{array}{cc} -2q  & 0 \\ 0  & 0 \end{array} \right).$$
The spectral condition on $L$ is again satisfied  thanks to the Sturm-Liouville theory
 and  is proven for  $ 1/2 < |c|<1$  in \cite{BS} in order to prove the nonlinear stability of
  the  solitary wave. Note that the formulation \eqref{bous1dS} that we
  use is equivalent to the one used   by Bona and Sachs in \cite{BS}.
 

The transversally perturbed model is
\beq
\label{bous2d}
u_{tt}+(u_{xx}+u^2-u -   \partial_{x}^{-2} \partial_{y}^2 u )_{xx}=0.
\eeq
The equation (\ref{bous2d}) has been derived in \cite{Johnson} as a model for
interacting shallow water waves.
Again, to rewrite this equation as a first order system, we introduce
$$ \mathcal{B}w= -w_{xx} + w  + \partial_{x}^{-2} w_{yy}.$$
 We write \eqref{bous2d} as
\beq
\label{Bous2d}
 u_{t}= \partial_{x} \mathcal{B}^{ 1 \over 2 }v, \quad  v_{t}= \partial_{x}
 \mathcal{B}^{-\frac{1}{2}} \Big( \mathcal{B}u -  u^2 \Big)
 \eeq
 and hence going into the moving frame, we find
 $$ 
u_{t}= \partial_{x} \mathcal{B}^{  - {1 \over 2} } \Big(\mathcal{B}v +c \mathcal{B}^{1\over2}
 u \Big)
 , \quad  v_{t}= \partial_{x}
 \mathcal{B}^{-{ 1 \over 2 } } \Big( \mathcal{B}u  + c \mathcal{B}^{ 1 \over 2 } v-  u^2 \Big).
 $$
 Consequently, we  get a system under the form \eqref{ham2} with
$$
\mathcal{J}(\partial_{y})=\left( \begin{array}{cc} 0 & \partial_{x}\mathcal{B}^{- {1 \over 2 } } \\
   \partial_{x} \mathcal{B}^{- { 1 \over 2 } } & 0 \end{array} \right)
,\quad
\mathcal{S}(\partial_{y})=
\left( \begin{array}{cc}  \partial_{x}^{-2}\partial_{y}^2 & 
     c \big( \mathcal{B}^{ 1 \over 2} - B^{ 1 \over 2}\big) \\
      c \big( \mathcal{B}^{ 1 \over 2} - B^{ 1 \over 2}\big) & \partial_{x}^{-2}\partial_{y}^{2}
       \end{array}\right).
$$
Therefore the $1d$ operators $J(ik)$ and $S(ik)$ are defined as 
$$
  J(ik) = \left( \begin{array}{cc} 0 & \partial_{x}B(ik)^{- {1 \over 2 } } \\
   \partial_{x} B(ik)^{- { 1 \over 2 } } & 0 \end{array} \right)
   , \quad
     S(ik) = \left( \begin{array}{cc}  - k^2 \partial_{x}^{-2} & 
     c \big( B(ik)^{ 1 \over 2} - B^{ 1 \over 2}\big) \\
      c \big( B(ik)^{ 1 \over 2} - B^{ 1 \over 2}\big) & -k^2 \partial_{x}^{-2}
       \end{array}\right)$$
    with
    $$ B(ik)w = - w_{xx} + w - k^2 \partial_{x}^{-2}.$$
   Note that $J(ik)$ is a bounded operator on $L^2$. Indeed, its symbol is given by
    $$ \left( \begin{array}{cc} 0 & { i \xi \over \big(  1 + \xi^2 + { k^2 \over \xi^2 } \big)^{1
    \over 2}  } \\
 {  i\xi \over  \big(  1 + \xi^2 + { k^2 \over \xi^2 } \big)^{1
    \over 2} }   & 0 \end{array} \right).$$
    
 We can  easily check that the assumptions of section \ref{sJ} are verified. 
  Since  $J$  is bounded on $L^2$, the estimate
   \eqref{JR} and \eqref{Jik} are   obvious.  
      
We  also  easily check the assumptions of section \ref{sS}. The first
 three  assumptions can be  verified by computation. 
 Moreover, we notice that there exist positive constants $c_{0}$, $C_{0}$ such that
 \beq
 \label{sikbou} c_{0} k^2 |\partial_{x}^{- 1} U |^2  \leq \big( S(ik) U, U\big) \leq
  C_{0} k^2 |\partial_{x}^{- 1} U |^2.
  \eeq
  Indeed  by using the Fourier transform, we have
$$ \big( S(ik) U, U\big)
 =(2\pi)^{-1} 
\int_{\R}\Big(  {k^2 \over \xi^2}\big( |\hat{u}(\xi)|^2 + |\hat{v}(\xi)|^2 \big)
  +  2c  { {k^2 \over \xi^2} \over  \big( 1 + \xi^2 + {k^2 \over \xi^2} \big)^{ 1 \over 2} 
   + \big( 1 + \xi^2 \big)^{ 1 \over 2}  } \hat{u}(\xi)\, \overline{\hat{v}(\xi)} \Big) d\xi$$
   and hence \eqref{sikbou} follows since $c<1$. Thanks to \eqref{sikbou}, we also  find
     that \eqref{JSw} is verified with $C(k)= k $.
     
   Next, to get \eqref{klarge},  we use again the Fourier transform
    to write
  \begin{equation*}
 (LU, U) + (S(ik) U, U) \geq C_1 \int_{\R}(1-c ) \big(  1 + \xi^2 + { k^2 \over \xi^2 } - C \big)
    |\hat{U}(\xi)|^2  d\xi,
    \end{equation*}
    where $C_1\approx |Q|_{L^{ \infty}}$. Consequently, we get
     \eqref{klarge} as for the KP equation.
 
 The assumptions of section~\ref{Ms} on the existence of  suitable multipliers
  follows again from Corollary \ref{cormult}.  Indeed, 
   $J(ik)$ is a zero order operator and $L_{0}= - \partial_{x}^2
    + \tilde{L}$ with $\tilde{L}$ a first order operator.    
  
Let us turn to the study of the resolvent equation \eqref{res2}.
 We first  notice that
 $$ \sigma{\rm Id} - J(ik) ( L+ S(ik) ) = 
  \left( \begin{array}{cc} \sigma - c \partial_{x} &  - \partial_{x}B(ik)^{ 1 \over 2 } \\
   - \partial_{x}\, B(ik)^{ - { 1 \over 2 } } \big( B(ik)  - 2 q \big) &
    \sigma - c \partial_{x} \end{array} \right).$$
Consequently,  by using again section \ref{criteria}, we can set
$$ R(\sigma, k) = \left( \begin{array}{cc}  \sigma - c \partial_{x } &  \partial_{x}
 B(ik)^{ 1 \over 2} \\ 0 & 1 \end{array} \right)$$
  to get  \eqref{factorisation} with
 \begin{eqnarray*}
 & &P_{1}
    (\sigma, k)u  = 
      \partial_{x}^4 u  - \partial_{x}^2 u  + k^2 u  + 2 \partial_{x}^2 ( qu) +
     (\sigma - c \partial_{x})^2 u, \\
 & & E(\sigma, k) =  \sigma -c \partial_{x}, \\
 & &  P_{2} (\sigma, k) u = - \partial_{x} B(ik)^{ - { 1 \over 2 }} 
 \big( B(ik) u  - 2 q u \big) = - \partial_{x} B(ik)^{1 \over 2 } u - 2 
 \partial_{x} B(ik)^{ - { 1 \over 2 }}(qu).
 \end{eqnarray*}
 Consequently, $P_{1}$ is a fourth order differential operator analytic in $(k, \sigma)$
  for 
  every $k$, $E$ is invertible for $\mbox{Re }\sigma > 0$ and
   $P_{2}$ is a second order operator with domain $H^2$.
     Indeed, $\partial_{x} B(ik)^{ - { 1 \over 2 }}$
      is a bounded operator on $L^2$ and we have the estimate
$$
2\pi \|  \partial_{x} B(ik)^{1 \over 2 } u \|^2
 = \int_{\R} \xi^2  \big( \xi^2 + 1 + { k^2 \over \xi^2} \big)  |\hat{u}(\xi) |^2
 \, d\xi \leq  \int_{\R} \big(  1+ \xi^4 + k^2  \big)   |\hat{u}(\xi) |^2
d\xi,
$$
 which is uniform  for $k$ in a vicinity of zero.   

 One can rewrite \eqref{P2eq} in the context of the Boussinesq equation as a first order system \eqref{edo} with
  $A(x, \sigma, k) \in M_{4}(\mathbb{C})$ for every $k$. The assumption
  \eqref{asympt}  is met since $q$ decays exponentially fast to zero at infinity.
  Moreover,  the eigenvalues $\lambda$  of $A_{\infty}(\sigma, k)$
 are the   roots of the
polynomial $P$ defined as
$$
P(\lambda)=\lambda^4-(1-c^2)\lambda^2-2c\sigma\lambda+k^2+\sigma^2.
$$
Suppose that $P$ has a root of the form $\lambda=i\mu$ with $\mu\in\R$. Then
by separating the real and the imaginary part of $P(i\mu)$, we get the
relations
$$
\mu^4+(1-c^2)\mu^2+2c\sigma_2\mu+\sigma_1^2-\sigma_2^2+k^2=0,\quad
-2c\sigma_1\mu+2\sigma_1\sigma_2=0,
$$
where $\sigma=\sigma_1+i\sigma_2$, $\sigma_1,\sigma_2\in\R$.
Therefore, since $\Re(\sigma)=\sigma_1\neq 0$, we have that $\mu=\sigma_2/c$
(recall that we are interested for the values of $c$ such that $1/2<|c|<1$). 
By substituting the value of $\mu$ in the first equation, we get
\begin{equation}\label{cergy}
\frac{\sigma_2^4}{c^4}+\frac{\sigma_2^2}{c^2}+\sigma_1^2+k^2=0.
\end{equation}
But since  in the last equation
 for $\sigma_{2}$, if $\sigma_{2}$ is real, all the terms are non-negative and $\sigma_{1}^2>0$,
  there is  no real
root for every $k \in \mathbb{R}$.
Therefore for every $k\in\R$ and $\Re(\sigma)\neq 0$ the equation
$P(\lambda)=0$ has no root on the imaginary axis. Since for $k=0$ there is no
complication coming from the emergence of a root on the imaginary axis,
we are in the same situation as for  the nonlinear Schr\"odinger equation.
The assumption \eqref{evans1D} (and hence also \eqref{evans1Dt}) is met  since  the solitary wave
  $q$ is stable as a solution of the $1D$ Boussinesq equation for
   $1/2< |c| < 1$  as shown
   in \cite{BS}, we get \eqref{resper}, \eqref{resloc} from Lemma~\ref{lemper}
   and Lemma~\ref{lemloc}.


The ``nonlinear'' assumptions  of section  \ref{asnon}
 are also met.
 Indeed, the local well-posedness  of the 
  $2d$ Boussinesq equation which is semi-linear can be obtained
   by standard techniques. Moreover,  since
    $\mathcal{J}$ is a bounded operator on $H^s$, the assumption \eqref{tame}
    follows readily from  the Gagliardo-Nirenberg-Moser inequality.

\begin{remarque}
{\rm
Let us observe that if we consider transverse perturbation with the opposite
sign that even the problem defining the free evolution is ill-posed in Sobolev
spaces. Thus in the context of the Boussinesq equation the analogue of the
KP-II equation is not a ``good'' model in Sobolev spaces.
Indeed consider the linear problem
\begin{equation}\label{ill}
u_{tt} + \big( u_{xx}  - u + \partial_{x}^{-2} \partial_{y}^2 u \big)_{xx}= 0,
\end{equation}
 Using the
Fourier transform, we obtain that $\hat{u}$ solves  
$$
 \hat{u}_{tt} + \big( \xi^4 + \xi^2 - k^2 \big) \hat{u} = 0,$$
 and hence, one can find growing modes  $ e^{\lambda t } \hat{u}(\xi, k)$
    if 
  $$
P(\lambda)=\lambda^2 + \xi^4 + \xi^2 - k^2 = 0\,.
$$
 Since  one can find roots of $P$    with arbitrary large real parts
and arbitrary sign  this 
implies the ill-posedness of (\ref{ill}).
A similar phenomenon occurs for the equation
\begin{equation}\label{ill-bis}
u_{tt}+(-u_{xx}-u-\partial_{x}^{-2}\partial_{y}^{2})_{xx}=0,
\end{equation}
where the sign is changed in front of the dispersion term.
In view of this discussion, it becomes reasonable to study (\ref{ill}) or
(\ref{ill-bis}) in analytic spaces.}
\end{remarque}

 We can also use Lemma \ref{eigen} to get an unstable eigenmode.
  Indeed, we have
  $$ M_{k}= \left( \begin{array}{cc} \partial_{xx} & c \partial_{xx} B(ik)^{- { 1 \over 2 } } 
  \\  c \partial_{xx} B(ik)^{- { 1 \over 2 } } &   \partial_{x}B(ik)^{- { 1 \over 2 } } \big(
   B(ik) - 2q \big) \partial_{x} B(ik)^{- { 1 \over 2 } } \end{array}\right).$$
   Consequently $(u,v)^t\in L^2(\R;\R^2)$ is in the kernel of $M_{k}$ if and only if
 $$  u  = -c  B(ik)^{- { 1 \over 2 } } v, \quad
  \partial_{x } B(ik)^{- { 1 \over 2 } } \Big(
   - c^2 +  B(ik) - 2 q \Big)\partial_{x}   B(ik)^{- { 1 \over 2 } } v = 0. 
   $$
   Next, we notice that
   \begin{multline*}
     \partial_{x} B(ik)^{- { 1 \over 2 } } \Big(
   - c^2 +  B(ik) - 2 q \Big)\partial_{x}   B(ik)^{- { 1 \over 2 } } = 
\\ 
=
B(ik)^{- { 1 \over 2 } } \Big(  \partial_{x} \big(- \partial_{x}^2 + (1 - c^2) - 2 q \big)
    \partial_{x } - k^2 \Big) B(ik)^{- { 1 \over 2 } } 
\equiv   B(ik)^{- { 1 \over 2 } } m_{k}\,  B(ik)^{  - {1 \over 2} }. 
    \end{multline*}
Notice that $m_{k}$ is the operator $JLJ+ JSJ$ which appears
     in the study of the stability of the solitary wave with  speed $1-c^2$
     of the KP-I equation. Thus as in the analysis for the KP-I equation, we can show that
$m_{k}$ has
      a one-dimensional  non trivial kernel for some $k_{0} \neq 0$. This implies
       that $M_{k_{0}}$ also has a non-trivial kernel  generated by  
       $$ \varphi= (  - c \psi, B(ik_{0})^{ 1 \over 2 } \psi),$$
       $\psi$ being nontrivial and such that $m_{k_{0}} \psi = 0 $.
        Moreover, we can deduce that $M_{k_{0}}$ is Fredholm
         index 0 from the fact that $m_{k_{0}}$ is Fredholm index 0.
        Let us check the non-degeneracy condition \eqref{hyplem}.
      Using the identity
    \begin{equation}\label{arx-1}
     { d\over dk}_{/k= k_{0}} B(ik)^{-\frac{1}{2}}  =   k_{0} \partial_{x}^{-2} B(ik_{0})
      ^{ - { 3 \over 2 }},
\end{equation}
      we obtain that
   \begin{equation}\label{arx}
\big(  \Big[{d\over dk} M_{k}\Big]_{k=k_0} \varphi, \varphi \big)
   = -2 k_{0} |\psi |^2\neq 0
\end{equation}  
since $k_{0}\neq 0 $. More precisely $M_{k}=M_{k}^{1}+M_{k}^{2}$ with
$$ 
M_{k}^{1}= 
\left( \begin{array}{cc} \partial_{xx} & 0
  \\  0 &   \partial_{x}B(ik)^{- { 1 \over 2 } } \big(
   B(ik) - 2q \big) \partial_{x} B(ik)^{- { 1 \over 2 } } \end{array}\right)
$$
and
$$
M_{k}^{2}=\left( \begin{array}{cc} 0 & c \partial_{xx} B(ik)^{- { 1 \over 2 } } 
  \\  c \partial_{xx} B(ik)^{- { 1 \over 2 } } &   0
\end{array}\right).
$$
A use of  (\ref{arx-1}) gives
$$
\big(  \Big[{d\over dk} M_{k}^2\Big]_{k=k_0} \varphi, \varphi \big)=
 - 2 c^2  k_{0}(B(ik_{0})^{-1} \psi, \psi)
$$
and (using that  $m_{k_{0}}\psi=0$)
$$
\big(  \Big[{d\over dk} M_{k}^1\Big]_{k=k_0} \varphi, \varphi \big)=
2 c^2  k_{0}(B(ik_{0})^{-1} \psi, \psi)-2 k_{0} |\psi |^2\,.
$$
Thus the identity (\ref{arx}) indeed holds true. This allows to use Lemma \ref{eigen} to get the existence of an unstable
  eigenmode.

Consequently, we can apply our general theory to get the following statement.
\begin{theoreme}
Consider the equation \eqref{Bous2d} for $|c| \in (1/2, 2)$. 
For every $s\geq 0$, there exists $\eta>0$ such that for every $\delta>0$ there
exists $u_0^\delta$ and a time $T^\delta\sim |\log\delta|$ such
that 
$
\|u_0^\delta-Q\|_{H^s(\R^2)}<\delta
$
 and the solution $u^\delta (t)$  of \eqref{Bous2d}
with data $u_0^\delta$ is   defined on $[0,T^\delta]$, with   $u^\delta(t) - Q
 \in H^s(\mathbb{R}^2)$, $\forall t \in [0, T^\delta ]$  
and moreover satisfies the estimate
$$
\inf_{v\in {\mathcal F}}\|u^\delta(T^\delta)-v\|_{L^2(\R^2)}\geq \eta,
$$
where $\mathcal{F}$ is the space of $L^2(\R)$ functions independent of $y$.
\end{theoreme}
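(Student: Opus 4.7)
The plan is to combine all the structural verifications performed in the Boussinesq discussion above with the two abstract nonlinear instability theorems, Theorem \ref{theoper} and Theorem \ref{theoloc}. Once each hypothesis of sections \ref{s1D}--\ref{asnon} is in place for the reformulated system \eqref{Bous2d} with $Q=(q,-cB^{-1/2}q)$, the localized statement follows directly from Theorem \ref{theoloc} (and the periodic analogue from Theorem \ref{theoper}).

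For the 1D framework of section \ref{s1D} I take $d=2$ and the operators $J$, $L_0$, $F$ written down above; the coercivity \eqref{coerc} and the spectral decomposition \eqref{spectre}, together with the 1D Evans non-vanishing \eqref{evans1D} and its continuation \eqref{evans1Dt}, are the orbital stability result of Bona--Sachs \cite{BS} specialized to the admissible speeds. For the transverse operators of sections \ref{sJ}, \ref{sS}, boundedness of $J(ik)$ on $L^2$ makes \eqref{Jik}--\eqref{JR} trivial, while the two-sided bound \eqref{sikbou} supplies \eqref{JSw} with $C(k)=|k|$ and, via the Fourier side computation above, the coercive lower bound \eqref{klarge}. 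The multiplier hypothesis of section \ref{Ms} is provided by Corollary \ref{cormult}, applicable because $J(ik)\in\mathcal{B}(L^2)$ and $L_0=-\partial_x^2+\tilde{L}$ with $\tilde{L}\in\mathcal{B}(H^1,L^2)$.

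The heart of the argument is the resolvent analysis. The reduction \eqref{factorisation} with the choice of $R(\sigma,k)$ made above gives a fourth-order scalar ODE on the top block whose characteristic polynomial
\[
P(\lambda)=\lambda^4-(1-c^2)\lambda^2-2c\sigma\lambda+k^2+\sigma^2
\]
has no imaginary root for $\Re\sigma>0$ and every $k\in\R$, by the real/imaginary separation \eqref{cergy}; in particular no eigenvalue crosses the axis as $k\to 0$, so the compensation \eqref{compensation} is vacuous and Lemmas \ref{lemper}, \ref{lemloc} deliver the resolvent estimates \eqref{resper}, \eqref{resloc}. The unstable eigenmode I would produce via Lemma \ref{eigen}: conjugating $M_k$ by $B(ik)^{-1/2}$ reduces $\ker M_k$ to the kernel of
\[
m_k=\partial_x\bigl(-\partial_x^2+(1-c^2)-2q\bigr)\partial_x-k^2,
\]
which is exactly the operator $JLJ+JSJ$ of the KP-I stability problem for the rescaled KdV soliton of speed $1-c^2$. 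The gKP-I analysis above furnishes a simple $k_0\neq 0$ with $m_{k_0}\psi=0$, $\psi\in H^\infty$, hence $\varphi=(-c\psi,B(ik_0)^{1/2}\psi)\in\ker M_{k_0}$; Fredholm index zero transfers through the conjugation, and the identity \eqref{arx-1} applied to the splitting $M_k=M_k^1+M_k^2$ yields the non-degeneracy \eqref{hyplem} in the form $([\tfrac{d}{dk}M_k]_{k_0}\varphi,\varphi)=-2k_0|\psi|^2\neq 0$ from \eqref{arx}.

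The nonlinear assumptions of section \ref{asnon} are routine: local well-posedness in $\mathbb{H}^s$ for \eqref{Bous2d}, written as a semi-linear perturbation of a bounded Fourier-multiplier evolution, is obtained by the regularization argument used in the gKP-I example, and the tame estimate \eqref{tame} follows, after an integration by parts in $x$ producing the loss-free term $\tfrac12\partial_x(w^q v^{r-1})|\partial^\alpha v|^2$, from Gagliardo--Nirenberg--Moser and the tame commutator estimate, using that $\mathcal{J}(\partial_y)$ is bounded on every $\mathbb{H}^s$. The main obstacle I anticipate is keeping explicit track of the speed range on which (i) the profile $q$ exists with exponential decay, (ii) $L_0$ is coercive, (iii) the Bona--Sachs spectral and 1D Evans conditions hold, and (iv) the KP-I kernel computation for $m_{k_0}$ remains valid; the most restrictive of these is (iii), and it is on the intersection of these constraints that the conclusion is to be read. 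Once the intersection is seen to contain the stated set of speeds, the theorem is a mechanical specialization of Theorems \ref{theoper} and \ref{theoloc}.
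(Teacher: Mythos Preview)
Your proposal is correct and follows essentially the same route as the paper: verify the structural hypotheses of sections \ref{s1D}--\ref{asnon} for the reformulated system, invoke Corollary~\ref{cormult} for the multiplier, reduce the resolvent to the scalar fourth-order ODE whose characteristic polynomial has no imaginary roots for any $k$ (so Lemmas~\ref{lemper}--\ref{lemloc} apply with trivial compensation), and produce the unstable mode via Lemma~\ref{eigen} by conjugating $M_k$ to the KP-I operator $m_k$. One simplification: for the tame estimate \eqref{tame} the paper does \emph{not} use the integration-by-parts/commutator argument you borrowed from the gKP-I case, because here $\mathcal{J}(\partial_y)$ is bounded on $L^2$, so Gagliardo--Nirenberg--Moser alone suffices. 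Your caution about the speed range is well placed: the existence of $q$ already forces $|c|<1$, and the Bona--Sachs spectral/stability input requires $|c|\in(1/2,1)$, so the ``$(1/2,2)$'' in the statement should be read as $(1/2,1)$.
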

A similar statement may be done for periodic in $y$ solutions with a suitable period
depending on the transverse frequency of the unstable mode (see Theorem~\ref{theoper} above).

\subsection{The Zakharov-Kuznetsov equation}
The Zakharov-Kuznetsov equation
\begin{equation}\label{ZK}
u_{t}+u_{xxx}+u_{xyy}+uu_x=0
\end{equation}
is derived in \cite{ZK} to describe the propagation of nonlinear ionic-sonic
waves in plasma magnetic field. 
Equation (\ref{ZK}) is a two dimensional generalization of the KdV equation
which fits into our general framework with $d=1$. 
Indeed, if we denote by $Q$ the suitable speed one KdV solitary wave, then $Q$ is a
stationary solution of 
\begin{equation}\label{ZKbis}
u_{t}-u_{x}+u_{xxx}+u_{xyy}+uu_x=0.
\end{equation}
We can write (\ref{ZKbis}) as
$$
u_{t}=J(L_0+\nabla F(u)+{\mathcal S}(\partial_y))u
$$
with
$$
J=\partial_{x},\quad L_{0}=-\partial_{x}^{2}+{\rm Id},\quad
F(u)=-\frac{u^3}{6},\quad {\mathcal S}(\partial_y)=-\partial_{y}^2\,.
$$
Assumptions  of section \ref{s1D} are still verified since as for the KP-I
 equation,  the 1d model
 is  the KdV equation. Assumptions \ref{sJ}, \ref{sS}, \eqref{klarge} are easy to check.

 The operator
 $$ \sigma u  - J(L + S(ik))u  = \sigma u - u_{x} + u_{xxx} - k^2 u_{x} +  2 (Qu)_{x}$$
  is already a differential operator and hence we can readily  use
   section \ref{criteria} with $R(\sigma, k)= {\rm Id}.$ In
    particular, we have for every $k$
 $$ A(x, \sigma, k) =
  \left( \begin{array}{ccc} 0 & 1 & 0 \\ 0 & 0 & 1 \\ - \sigma - 2 Q_{x} & (1+ k^2)
   - 2Q & 0 \end{array}\right).
   $$
  This allows to find that the eigenvalues of $A_{\infty}(\sigma, k)$
   are the roots of the polynomial 
  $$P(\lambda)=  \lambda^3-(1+k^2)\lambda+\sigma.$$
   Since, for $\xi \in \mathbb{R}$, we have  $\mbox{Re }P(i\xi)= \mbox{Re }\sigma$,
    we get that for $\mbox{Re }\sigma\neq 0$ there is no eigenvalue of
    $A_{\infty}(\sigma, k)$ on the imaginary axis. 
    As  in the case
  of  NLS, the assumptions of section \ref{reshyploc} are obviously verified
   since we are in a situation where $A(x, \sigma, k)$
    is analytic for every $k$ and  where  there is no eigenvalue
     of $A_{\infty}$ on the imaginary axis even for $k=0$. Moreover, 
      \eqref{evans1D} (and hence also \eqref{evans1Dt})
       are verified thanks to the stability of the KdV solitary wave.
    Consequently,  the assumptions of sections \ref{hypper} and \ref{hyploc}
     follow from Lemma~\ref{lemper} and  Lemma~\ref{lemloc}.
     
 To check assumption \ref{Ms}, we use again Lemma \ref{lemmult}.
  By taking
  $$ K_{s} u =  -  { 2 \over 3 } ( 1 + 2 s) Q\, u,$$
 we find that $E_{s}$ is a first order operator and hence the assumption of
 existence of multiplier of section~\ref{Ms} follows from Lemma~\ref{lemmult}.
  
  The assumptions of section \ref{asnon}  i.e  about the local  well posedness
   of the nonlinear equation  are again verified
  by standard arguments. Note that assumption \eqref{tame}
   was already checked in the study of the KP equation.
   
    Finally, we   note that it does not seem possible to use the
     simple criterion of  Lemma \ref{eigen} to prove the
     existence of an unstable eigenmode.
      Indeed, we have
     $$ M_{k}= \partial_{x}( L-k^2 ) \partial_{x}, \quad L= -u_{xx} + u - 2 Q. $$
     It is easy to prove that  there exists $k_{0}$
      such that $M_{k_{0}}$ has a non-trivial kernel.
       Nevertheless, here  $M_{k_{0}}$ is not a  Fredholm operator with index zero. 
 Fortunately, the existence of unstable modes was obtained in
  \cite{Bridges} by using more sophisticated arguments
   (i.e. the multisymplectic formulation of the equation). Consequently, we have the following result.
   
 \begin{theoreme}
Consider the equation \eqref{ZKbis}. 
For every $s\geq 0$, there exists $\eta>0$ such that for every $\delta>0$ there
exists $u_0^\delta$ and a time $T^\delta\sim |\log\delta|$ such
that 
$
\|u_0^\delta-Q\|_{H^s(\R^2)}<\delta
$
 and the solution $u^\delta (t)$  of \eqref{ZKbis}
with data $u_0^\delta$ is   defined on $[0,T^\delta]$  with   $u^\delta(t) - Q
 \in H^s(\mathbb{R}^2)$, $\forall t \in [0, T^\delta ]$  
and moreover satisfies the estimate
$$
\inf_{v\in {\mathcal F}}\|u^\delta(T^\delta)-v\|_{L^2(\R^2)}\geq \eta,
$$
where $\mathcal{F}$ is the space of $L^2(\R)$ functions independent of $y$.
\end{theoreme}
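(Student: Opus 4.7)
The plan is to obtain the statement as a direct application of Theorem~\ref{theoloc} to equation \eqref{ZKbis}, using the verifications of the abstract hypotheses already carried out in the discussion preceding the theorem. Concretely, I would organize the argument in the same order in which the general framework is set up: first identify the one-dimensional data $(J, L_0, F)$, then check the transverse structural hypotheses on $J(ik)= \partial_x$ and $S(ik)= -k^2{\rm Id}$ (Sections~\ref{sJ}--\ref{sS}), then check the sign-compatibility \eqref{klarge}, and finally check the Evans-function/resolvent hypotheses and the existence of a multiplier.

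Most of these verifications are either immediate or reduce to observations already made for the generalized KP-I equation. The structural assumptions on $J(ik)$ and $S(ik)$ are trivial since $S(ik)=-k^2{\rm Id}$ is a zeroth-order multiplier and $J(ik)=\partial_x$ is independent of $k$; coercivity \eqref{klarge} follows from $(Lv,v)+(S(ik)v,v)\geq |v_x|^2+(1+k^2)|v|^2-C|v|^2$. For the Evans function and the resolvent bounds \eqref{resper}, \eqref{resloc}, I would take $R(\sigma,k)={\rm Id}$ in the reduction of Section~\ref{criteria}; the associated first-order matrix $A(x,\sigma,k)$ is analytic in $(\sigma,k)$ for all $k\in\R$ and the characteristic polynomial $P(\lambda)=\lambda^3-(1+k^2)\lambda+\sigma$ has no purely imaginary root when $\Re\sigma\neq 0$ (since $\Re P(i\xi)=\Re\sigma$), so there is no root collision on the imaginary axis at $k=0$. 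Together with the stability of the KdV solitary wave (giving $D(\sigma,0)=\tilde D(\sigma,0)\neq 0$ for $\Re\sigma>0$), Lemmas~\ref{lemper} and \ref{lemloc} directly yield the required resolvent estimates. Existence of the multiplier $M_s$ follows from Lemma~\ref{lemmult} with the choice $K_s u = -\tfrac{2}{3}(1+2s)Q\,u$, for which $E_s$ becomes a first-order operator. The nonlinear assumptions of Section~\ref{asnon}, namely local well-posedness in $\mathbb{H}^s$ and the tame estimate \eqref{tame}, follow by standard semilinear techniques (e.g.\ the regularization used for gKP-I), and the tame estimate is essentially the one already checked for gKP-I.

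The step that is genuinely not routine, and which I expect to be the main obstacle, is the existence of an unstable eigenmode with $k\neq 0$. The simple criterion of Lemma~\ref{eigen} cannot be applied here: the natural candidate $M_k=\partial_x(L-k^2)\partial_x$ does have a nontrivial kernel at some $k_0$, but it fails to be Fredholm of index zero on $L^2$, so the implicit-function-theorem argument of Lemma~\ref{eigen} breaks down. My plan is therefore to invoke the multisymplectic spectral analysis of Bridges~\cite{Bridges}, which produces an unstable mode $(\sigma,k)$ with $\Re\sigma>0$ and $k\neq 0$ for the linearization of the Zakharov--Kuznetsov equation about the KdV solitary wave; this supplies the remaining hypothesis of Theorem~\ref{theoloc}.

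With all hypotheses in place, I would close the proof by quoting Theorem~\ref{theoloc}: the conclusion is exactly the statement to be established, with the distance $d(u^\delta(T^\delta),\mathcal{F})\geq \eta$ and the logarithmic time $T^\delta\sim |\log\delta|$ provided verbatim by the abstract theorem, and with $\mathcal{F}$ the space of $L^2(\R)$ functions depending only on $x$.
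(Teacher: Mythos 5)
Your proposal matches the paper's argument step for step: same choices $J=\partial_x$, $L_0=-\partial_x^2+{\rm Id}$, same use of $R(\sigma,k)={\rm Id}$ and the characteristic polynomial $\lambda^3-(1+k^2)\lambda+\sigma$ to get the Evans function and resolvent bounds via Lemmas~\ref{lemper}--\ref{lemloc}, same multiplier $K_su=-\tfrac{2}{3}(1+2s)Qu$, and the same observation that Lemma~\ref{eigen} fails ($M_k=\partial_x(L-k^2)\partial_x$ is not Fredholm of index zero) so that the unstable mode must be taken from Bridges~\cite{Bridges}. The only blemish is the sign typo $S(ik)=-k^2{\rm Id}$: since $\mathcal{S}(\partial_y)=-\partial_y^2$ one has $S(ik)=k^2{\rm Id}$, which is what your coercivity estimate actually uses, so this does not affect the argument.
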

A similar statement may be done for periodic in $y$ solutions with a suitable period
depending on the transverse frequency of the unstable mode (see Theorem~\ref{theoper} above).

\subsection{KP-BBM}
Consider the generalized BBM equation
\begin{equation}\label{BBM}
u_{t}-u_{txx}+u_{x}+\partial_{x}(u^p)=0
\end{equation}
and the $2d$ generalization of KP type
\begin{equation}\label{KP-BBM}
u_{t}-u_{txx}+u_{x}+\partial_{x}(u^p)-\partial_{x}^{-1}u_{yy}=0.
\end{equation}
For $c>1$ there is a solitary wave solution of (\ref{BBM}) of the form
$u(t,x)=Q(x-ct)$.
 Again, we note that 
 $$ Q(x) =  (c-1)^{ 1 \over p-1 } Q^{KdV}\big( \sqrt{ 1 - { 1 \over c } } x \big).$$
 Then $Q(x)$ is a stationary solution of the equation
\begin{equation}\label{BBM2}
u_{t}-(c-1)u_{x}-u_{txx}+cu_{xxx}+\partial_{x}(u^p) - \partial_{x}^{-1}u_{yy}=0.
\end{equation}

Equation (\ref{BBM2}) may be written under the form 
$$
\partial_{t}u=J( L_{0} +  \nabla F(u) + \mathcal{S}(\partial_{y})) u ,
$$
where
$$
J=(1-\partial_x^2)^{-1}\partial_{x},\quad
L_{0}=  - c\partial_{xx}+(c-1)Id, \quad F(u)= -  \frac{1}{p+1}\int u^{p+1}, 
\quad S(ik) u =  -  k^2 \partial_{x}^{-2} \,.
$$
The corresponding operator $L$ is 
$$
L(u)=-cu_{xx}+(c-1)u- pQ^{p-1}u\,.
$$
Again, it is very easy to check the assumptions of sections \ref{s1D}, 
 \ref{sJ}, \ref{sS}, \ref{klarge}. To ensure that \eqref{spectral} and
  \eqref{evans1D} are verified, we restrict ourself to $p \leq 4$, in this
  case all the waves for $c>1$ are stable in the 1D model which is
   the BBM equation \cite{PW}, \cite{BSS}.
   
 Note that we are in a semilinear situation since  $J$ is a zero
  order operator (and even better). Consequently, the assumption of existence
  of multiplier of section~\ref{Ms} is verified thanks to Corollary
  \ref{cormult} (recall that for scalar problems it is straightforward). 
   The assumption~\ref{tame}  is met thanks to the Gagliardo-Nirenberg-Moser
     inequality. Again the local well-posedness assumed in section \ref{asnon}
       can be proven by standard methods. 
   
  To check \eqref{resper}, \eqref{resloc}, we can use section \ref{criteria}.
   Since
  $$ \sigma u- J \big( L + S(ik)\big)u=
   \sigma u -  (1-\partial_{x}^2)^{-1} \partial_{x} \big( - c \partial_{xx} u 
    + (c-1) u - p Q^{p-1} u  - k^2 \partial_{x}^{-2} u \big), $$
     we set 
  $$ R(\sigma, k) = \left\{ \begin{array}{ll} ( {\rm Id} - \partial_{x}^2) \partial_{x}, \quad \mbox{ if } k\neq 0, \\
   {\rm Id} - \partial_{x}^2 ,  \quad \quad \quad \!  \mbox{ if } k= 0. \end{array}\right.$$
   Then we directly  find that
    $ R(\sigma, k) - J( L+ S(ik)))=P_{1}(\sigma, k)$ is a differential operator
     of order $4$ for $k \neq 0$ and $3$ for $k=0$.
      Consequently, the assumption of section~\ref{factorisation} is matched
       with an empty second block.
       
 For $k \neq 0$, we have \eqref{edo} with 
$$
A(x,\sigma,k)= c^{-1}
\left( 
\begin{array}{cccc}
0 & c & 0 & 0 \\ 
0 & 0 & c & 0 \\
0 & 0 & 0 & c \\
-k^2 - p\partial_x^{2}(Q^{p-1}) & -\sigma - 2p\partial_x(Q^{p-1}) & c-1- pQ^{p-1} &  \sigma
\end{array} 
\right).
$$
Thus
$$
A_{\infty}(\sigma,k)=c^{-1}
\left( 
\begin{array}{cccc}
0 & c & 0 & 0 \\ 
0 & 0 & c & 0 \\
0 & 0 & 0 & c \\
-k^2 & -\sigma &c- 1 &  \sigma
\end{array} 
\right).
$$
The eigenvalues  of $A_{\infty}(\sigma,k)$  are the roots of the  polynomial $P$ 
\beq\label{PB}
P(\lambda ) =c \lambda^4  - \sigma \lambda^3 - (c-1) \lambda^2 + \sigma \lambda + k^2
\eeq
and hence are not purely imaginary when $\Re\, \sigma >0$.
Moreover, there are two of positive real part and two of negative real part.
For $k=0$, we have 
$$
A(x,\sigma, 0)= c^{-1}
\left( 
\begin{array}{cccc}
0 & c & 0  \\ 
0 & 0 & c   \\
-\sigma-p\partial_x(Q^{p-1}) & 1-pQ^{p-1} & 0
\end{array} 
\right)
$$
and thus
$$
A_{\infty}(\sigma, 0)=c^{-1}
\left( 
\begin{array}{cccc}
0 & c & 0 \\ 
0 & 0 & c  \\
-\sigma & c-1  &  \sigma 
\end{array} 
\right).
$$
The characteristic polynomial of $A_{\infty}(\sigma, 0)$ is
$p(\lambda)=c \lambda^3  - \sigma \lambda^2 - ( c- 1 ) \lambda + \sigma$ and thus for $\Re(\sigma)>0$ the eigenvalues of $A_{\infty}(\sigma, 0)$ do not meet the
imaginary axis.  This allows to use Lemma \ref{existevans} to get
 the existence of the Evans function.
 Finally, since the BBM solitary wave
is stable (see e.g. \cite{PW}, \cite{BSS}) for $p\leq 4$, $ c>1$, 
we have $D(\sigma, 0) \neq 0$ when $\Re\, \sigma >0$ and hence the assumption 
 \eqref{evans1D} is met. Consequently, \eqref{resper} follows from \eqref{lemper}

To handle the localized case, we note that  
when $k$, tends to zero,  there is a single root $\lambda=0$ of (\ref{P}) on the imaginary axis
 and hence, there is spectrum of $A_{\infty}(\sigma, 0^+)$ on the imaginary
  axis.
More precisely,  for $k\sim 0$ this root behaves  as
\beq
\label{muBBM}
 \mu(\sigma,k) \sim -\frac{k^2}{ \sigma}.
\eeq
Consequently,  there  is only  one of the negative real part roots of (\ref{PB}) which goes to
zero. Since $\mu(\sigma,k)$ is analytic, we can use the Gap lemma \cite{GZ}, \cite{KS}
to get the continuation of the Evans function. Moreover, 
 by using the same method as in the study of the gKP equation,
  we can also write the Evans function as
 $|\tilde{D}(\sigma,0)|= |c\sigma D(\sigma, 0)|$ where
$D(\sigma, 0)$ is the Evans function 
associated to the linearized BBM equation about the solitary wave.  Again, 
since the BBM solitary wave
is stable   we also have  that $\tilde{D}(\sigma, 0)$ does
 not vanish for $\mbox{Re } \sigma >0$ and hence, \eqref{evans1Dt}
  is verified. 
  Note that, \eqref{compensation} is also met in view of \eqref{muBBM} since
  $$ R(\sigma, k) J(ik) S(ik)=  \partial_{xx} (- k^2 \partial_{x}^{-2}) = -k^2.$$
 Therefore, \eqref{resloc}
  follows from Lemma \ref{lemloc}.
  
Finally, as for the gKP equation, the existence of an unstable eigenmode
  follows from Lemma~\ref{eigen}.  Indeed, we can write $M_{k}$
   under the form
$$
M_{k}= (1 - \partial_{x}^2)^{-1} m_{k} ( 1 - \partial_{x}^2)^{-1},
$$
  where
  $$m_{k}u =  c  \partial_{x}\Big( \partial_{x}( -  \partial_{xx} +
  {(c-1)\over c}{\rm Id} + {p\over c}Q^{p-1} {\rm Id} \Big)\partial_{x} - k^2.$$
Again,  the existence of a nontrivial kernel for $m_{k}$
    comes from the study of the KP equation and one can deduce that
     $M_{k}$ is Fredholm from the fact that $m_{k}$ is Fredholm.

  Therefore, we can state the following result.
 
  \begin{theoreme}
Consider the equation \eqref{BBM2} for $c>1$ and $p \leq 4$. 
For every $s\geq 0$, there exists $\eta>0$ such that for every $\delta>0$ there
exists $u_0^\delta$ and a time $T^\delta\sim |\log\delta|$ such
that 
$
\|u_0^\delta-Q\|_{H^s(\R^2)}<\delta
$
 and the solution $u^\delta (t)$  of \eqref{BBM2}
with data $u_0^\delta$ is   defined on $[0,T^\delta]$  with   $u^\delta(t) - Q
 \in H^s(\mathbb{R}^2)$, $\forall t \in [0, T^\delta ]$  
and moreover satisfies the estimate
$$
\inf_{v\in {\mathcal F}}\|u^\delta(T^\delta)-v\|_{L^2(\R^2)}\geq \eta,
$$
where $\mathcal{F}$ is the space of $L^2(\R)$ functions independent of $y$.
\end{theoreme}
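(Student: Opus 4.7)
The strategy is simply to check the hypotheses of the abstract Theorem \ref{theoloc} (and of Theorem \ref{theoper} for the periodic version); the preceding paragraphs of this subsection already carry out most of the verification, so my plan is only to organize these verifications in the correct order and point out where to use each tool. First I would verify the structural assumptions of Sections \ref{s1D}, \ref{sJ}, \ref{sS} for the choice $J=(1-\partial_x^2)^{-1}\partial_x$, $L_0=-c\partial_x^2+(c-1)\mathrm{Id}$, $F(u)=-u^{p+1}/(p+1)$, $S(ik)=-k^2\partial_x^{-2}$. All of these are elementary computations (note that $J$ is bounded on $L^2$, so in particular \eqref{JR} and \eqref{Jik} are automatic), and the condition \eqref{klarge} follows from a Fourier side estimate exactly as in the gKP-I case. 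The spectral assumption \eqref{spectre} on $L=-c\partial_x^2+(c-1)\mathrm{Id}-pQ^{p-1}$ is Sturm--Liouville, and for $c>1$, $p\le 4$ the one-dimensional stability hypothesis \eqref{evans1D} holds since the BBM solitary wave is known to be stable in this range.

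Next I would invoke Corollary \ref{cormult} to produce the multiplier $M_s$: the hypotheses of that corollary are matched since $J$ is of order zero and $L_0=-c\partial_x^2+\tilde L$ with $\tilde L\in\mathcal{B}(H^1,L^2)$ (a harmless constant rescaling of $L_0$ does not affect the argument). For the Evans function and the resolvent bounds, I would apply the machinery of Section \ref{criteria} with the choice of $R(\sigma,k)$ already indicated in the subsection, so that $P_1(\sigma,k)$ is a fourth (resp. third) order differential operator with empty second block, and then conclude the periodic bound via Lemma \ref{lemper} and the localized bound via Lemma \ref{lemloc}. The key technical point is the analogue of the KP-I computation: when $k\to 0$ one root of the characteristic polynomial behaves like $\mu(\sigma,k)\sim -k^2/\sigma$, and the same row operation on the Wronskian shows $|\tilde D(\sigma,0)|=|c\sigma D(\sigma,0)|$, so \eqref{evans1Dt} reduces to the known BBM stability. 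The compensation estimate \eqref{compensation} holds because $R(\sigma,k)J(ik)S(ik)=-k^2$ exactly cancels the $k^2/\sigma$ loss.

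The nonlinear assumptions of Section \ref{asnon} are standard: local well-posedness of \eqref{BBM2} is obtained by a fixed point argument since $J$ is smoothing of order one (so \eqref{BBM2} is in fact semi-linear and Picard iteration on the Duhamel formulation works in $\mathbb{H}^s$ for $s$ large); the tame estimate \eqref{tame} follows from the Gagliardo--Nirenberg--Moser inequality, since $\mathcal{J}(\partial_y)$ is bounded on every $\mathbb{H}^s$ and the nonlinearity is polynomial.

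The step I expect to require the most care is the existence of an unstable eigenmode, to which I would apply Lemma \ref{eigen}. Writing
\[
M_k=(1-\partial_x^2)^{-1}m_k(1-\partial_x^2)^{-1},\quad
m_k u= \partial_x\bigl(c(-\partial_x^2)+(c-1)-pQ^{p-1}\bigr)\partial_x u - k^2 u,
\]
one recognizes $m_k$ as (a constant rescaling of) the operator $JLJ+JS(ik)J$ arising in the KP-I analysis, for which the existence of some $k_0\neq 0$ with a one-dimensional kernel and Fredholm index zero has already been established. The factor $(1-\partial_x^2)^{-1}$ is an isomorphism from $H^s$ to $H^{s+2}$, so it transports the kernel and the Fredholm index from $m_{k_0}$ to $M_{k_0}$. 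The non-degeneracy condition \eqref{hyplem} follows from a direct computation as in the Boussinesq case, since differentiating the $-k^2$ term produces $-2k_0|\psi|^2\ne 0$ (the contributions coming from $(1-\partial_x^2)^{-1}$ are controlled). Once Lemma \ref{eigen} produces the unstable mode, Theorem \ref{theoloc} (resp. Theorem \ref{theoper}) immediately yields the claimed instability statement.
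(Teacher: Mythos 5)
Your proposal follows essentially the same route as the paper: verify the structural hypotheses of Sections~\ref{s1D}, \ref{sJ}, \ref{sS}, obtain the multiplier from Corollary~\ref{cormult}, set up $R(\sigma,k)$ and the Evans/resolvent machinery of Section~\ref{criteria} with the $\mu(\sigma,k)\sim -k^2/\sigma$ analysis near $k=0$, use BBM stability for \eqref{evans1D}, \eqref{evans1Dt}, check \eqref{tame} by Gagliardo--Nirenberg, and produce the unstable mode via Lemma~\ref{eigen} through the factorization $M_k=(1-\partial_x^2)^{-1}m_k(1-\partial_x^2)^{-1}$, then invoke Theorems~\ref{theoper}/\ref{theoloc}. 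You add a couple of details the paper leaves implicit — the explicit computation of the non-degeneracy \eqref{hyplem}, which here gives $-2k_0|\psi|^2$ directly since $(1-\partial_x^2)^{-1}$ is $k$-independent, and the observation that the constant $c$ in $L_0$ only forces a harmless rescaling of $K_s$ in Corollary~\ref{cormult} — but these are elaborations, not a different argument.
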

A similar statement may be done for periodic in $y$ solutions with a suitable period
depending on the transverse frequency of the unstable mode (see Theorem~\ref{theoper} above).

Let us point out that the KP-BBM model considered in this section is not the
relevant one  from modelling view point (see \cite{Mameri}), the relevant
one being
\begin{equation}\label{BBM2pak}
u_{t}-(c-1)u_{x}-u_{txx}+cu_{xxx}+\partial_{x}(u^p) + \partial_{x}^{-1}u_{yy}=0.
\end{equation}
Equation (\ref{BBM2pak}) does not fit in the framework considered in this
paper and it is possible that the KdV soliton is in fact stable as a solution
of (\ref{BBM2pak}). Nevertheless  our KP-BBM model seems interesting
 for the following reason.
\subsection{Final remark}        
Let observe that in the case $p=2$ the equation (\ref{BBM2}) is globally
well-posed for data close to $Q$. We have therefore nonlinear instability in
the context of global well-posedness. Therefore this type of phenomena already
encountered in the context of the KP-I equation is not only
restricted to integrable models as the KP-I equation.
Let us briefly explain how we prove the global well-posedness for 
$$
u_{t}-(c-1)u_{x}-u_{txx}+cu_{xxx}+\partial_{x}(u^2) - \partial_{x}^{-1}u_{yy}=0
$$
with initial data
$$
u(0,x,y)=Q(x)+v_0(x,y), 
$$ 
where $v_0$ is localized both in $x,y$. More precisely, we suppose that
$v_0\in H^s(\R^2)$ with $s$ large enough. If we set $u=Q+v$ then we have that
$v$ solves the problem
\begin{equation}\label{posledno}
v_{t}-(c-1)v_{x}-v_{txx}+cv_{xxx}+\partial_{x}(v^2)+\partial_{x}(Qv) -
\partial_{x}^{-1}v_{yy}=0,\quad v(0,x,y)=v_0\,.
\end{equation}
In the case $Q=0$ the above equation is shown to be globally well-posed in \cite{SaTz}.
In the case of a $Q$ which is bounded together with its derivatives one needs
to combine the argument of \cite{SaTz} with the following control on the flow
of (\ref{posledno}). Multiplying (\ref{posledno}) by $v$ and integrating over
$\R^2$ yields
$$
\frac{d}{dt}\Big(\|v(t,\cdot)\|_{L^2}^{2}+\|\partial_{x}v(t,\cdot)\|_{L^2}^{2}
\Big)=-2\int \partial_{x}(Qv)v=-\int Q' v^2\,.
$$
A use of the Gronwall lemma provides the control
\begin{equation}\label{krum}
\|v(t,\cdot)\|_{L^2}+\|\partial_{x}v(t,\cdot)\|_{L^2}
\leq (\|v_{0}\|_{L^2}+\|\partial_{x}v_0\|_{L^2})e^{c(1+|t|)}\,.
\end{equation}
The local analysis of \cite{SaTz} shows that in the case $Q=0$ the problem
(\ref{posledno}) is locally well-posed for data such that
$\|v_{0}\|_{L^2}+\|\partial_{x}v_0\|_{L^2}<\infty$.
In order to include the term $\partial_{x}(Qv)$ in the local analysis of
\cite{SaTz} one needs to evaluate the quantity
\begin{equation}\label{quantity}
\|(1-\partial^2_x)^{-1}\partial_{x}(Qv)\|_{L^{1+\varepsilon}_{T}L^2_{x,y}}
+
\|(1-\partial^2_x)^{-1}\partial^{2}_{x}(Qv)\|_{L^{1+\varepsilon}_{T}L^2_{x,y}}
\end{equation}
for some $\varepsilon>0$. The unessential loss $\varepsilon$ (compared to the natural
$L^1_{T}$ coming from the Duhamel formula) is related to the fact that
the well-posedness in \cite{SaTz} is established in Bourgain spaces and the
non-linearity in a Bourgain's norm of type $X^{s,b-1}_{T}$, $b>1/2$ close to $1/2$ can be estimated by
the non-linearity if $L^{1+\varepsilon}_{T}H^s$ with $\varepsilon>0$ close to zero.
But the quantity (\ref{quantity}) can be easily estimated in terms
$\|v\|_{L^{\infty}_{T}L^2}+\|v_x\|_{L^{\infty}_{T}L^2}$ (and even only $\|v\|_{L^{\infty}_{T}L^2}$) which shows that the
term $\partial_{x}(Qv)$ can be incorporated in the local analysis of
\cite{SaTz} which in turn thanks to the control (\ref{krum}) implies that
in the case $p=2$ the equation (\ref{BBM2}) is globally
well-posed for data which is a localized perturbation of $Q$.

\end{document}